\newtheorem{theorem}{Theorem}
\newtheorem{lemma}{Lemma}
\newcommand\blfootnote[1]{%
  \begingroup
  \renewcommand\thefootnote{}\footnote{#1}%
  \addtocounter{footnote}{-1}%
  \endgroup
}
\newtheorem{assumption}{Assumption}
\newcommand{\OTheta}[1]{\Theta\left(#1\right)}
\title{Making the Last Iterate of SGD \\ Information Theoretically Optimal}
\author{
   Prateek Jain \\
  Microsoft Research\\
  Bengaluru, India \\
  \texttt{prajain@microsoft.com} \\
  \And
  Dheeraj Nagaraj\thanks{} \\
  Department of Electrical Engineering and Computer Science\\
  Massachusetts Institute of Technology\\
  Cambridge, USA 02139 \\
  \texttt{dheeraj@mit.edu} \\
   \And
  Praneeth Netrapalli \\
  Microsoft Research\\
  Bengaluru, India \\
  \texttt{praneeth@microsoft.com} \\
}
\begin{document}
\maketitle

\begin{abstract}
Stochastic gradient descent (SGD) is one of the most widely used algorithms for large scale optimization problems. While classical theoretical analysis of SGD for convex problems studies (suffix) \emph{averages} of iterates and obtains information theoretically optimal bounds on suboptimality, the \emph{last point} of SGD is, by far, the most preferred choice in practice. The best known results for last point of SGD~\cite{shamir2013stochastic} however, are suboptimal compared to information theoretic lower bounds by a $\log T$ factor, where $T$ is the number of iterations.~\cite{harvey2018tight} shows that in fact, this additional $\log T$ factor is tight for standard step size sequences of $\OTheta{\frac{1}{\sqrt{t}}}$ and $\OTheta{\frac{1}{t}}$ for non-strongly convex and strongly convex settings, respectively. Similarly, even for subgradient descent (GD) when applied to non-smooth, convex functions, the best known step-size sequences still lead to $O(\log T)$-suboptimal convergence rates (on the final iterate). The main contribution of this work is to design new step size sequences that enjoy information theoretically optimal bounds on the suboptimality of \emph{last point} of SGD as well as GD. We achieve this by designing a modification scheme, that converts one sequence of step sizes to another so that the last point of SGD/GD with modified sequence has the same suboptimality guarantees as the average of SGD/GD with original sequence. We also show that our result holds with high-probability. We validate our results through simulations which demonstrate that the new step size sequence indeed improves the final iterate significantly compared to the standard step size sequences.
\end{abstract}

\blfootnote{Accepted for presentation at the Conference on Learning Theory (COLT) 2019}
\keywords{Stochastic Gradient Descent\and Machine Learning \and Convex Optimization}

\section{Introduction}
\label{sec:intro}
Stochastic Gradient Descent (SGD) is one of the most popular algorithms for solving large-scale empirical risk minimization (ERM) problems \cite{lecun2015deep,shalev2011pegasos,akiba2017extremely}. The algorithm updates the iterates using stochastic gradients obtained by sampling data points uniformly at random. The algorithm has been studied for several decades \cite{bubeck2015convex} but there are still significant gaps between {\em practical implementations} and theoretical analyses. In particular, the standard analyses hold only for some kind of average of iterates, but most practitioners  just use the final iterate of SGD. So, \cite{shamir2012open} asked the natural question of whether the final iterate of SGD, as opposed to average of iterates, is provably good. It was partly answered in \cite{shamir2013stochastic} which gave sub-optimality bound for the last point of SGD but the obtained sub-optimality rates are $O(\log T)$ worse than the information theoretically optimal rates; $T$ is the number of iterations. 

\cite{harvey2018tight} showed that the above result is tight for the standard step-size sequence used by most existing theoretical results. The extra logarithmic factor is not due to the stochastic nature of SGD. In fact, even for {\em subgradient descent} (GD) when applied to general non-smooth, convex functions, the last point's convergence rates are sub-optimal by $O(\log T)$ factor. 

So, this work addresses the following two fundamental questions: \\
{\em 
``Does there exist a step-size sequence for which the last point of SGD 
when applied to general convex functions as well as to strongly-convex
functions has optimal error (sub-optimality) rate?'', and,\\
}
{\em 
``Does there exist a step-size sequence for which the last point of GD 
when applied to general non-smooth convex functions has optimal error 
(sub-optimality) rate?''\\
}
In this paper, we answer both the questions in the affirmative.
That is, we provide novel step size sequences and show that the final iterate of SGD run with these step size sequences has the information theoretically optimal error (suboptimality) rate. In particular, for general non-smooth convex functions, our results ensure an error rate of $O(\frac{1}{\sqrt{T}})$ and for strongly-convex functions, the error rate is $O(\frac{1}{T})$. We also present high-probablity versions, i.e., we show that with probability at least $1-\delta$, the suboptimality is $O\left(\sqrt{\tfrac{\log{\tfrac{1}{\delta}}}{T}}\right)$ and  $O\left(\tfrac{\log{\tfrac{1}{\delta}}}{T}\right)$ respectively (see Theorems~\ref{thm:main_theorem_1} and~\ref{thm:main_theorem_2}). For GD, we show that a similarly modified step-size sequence leads to suboptimality of $O(\frac{1}{{T}})$ and $O(\frac{1}{\sqrt{T}})$ for non-smooth convex functions, with and with out strong convexity respectively, which is optimal.

In general, SGD takes the iterates near the optimum value but since the objective isn't smooth near the optimizer $x^{*}$, the gradients don't become small even when the points are close to $x^{*}$. Standard step sizes don't decay appreciably with time to ensure fast enough convergence to $x^*$. Therefore the iterates $x_t$, after going close to $x^{*}$, start oscillating around it without actually approaching it (See Section~\ref{sec:simulations} for concrete examples). Our new step sizes, given in Section~\ref{subsubsec:general_step_modification} ensure that the step sizes decay fast enough after a certain point, making the iterates go closer to the optimum $x^{*}$. The exact mode of this decay ensures that the last iterate approaches the optimum at the information theoretic rate.

Our results utilize a general step size modification scheme which ensures that the upper bounds for the average function value with the original step sizes gets transferred to the last iterate when the modified step sizes are used (see Theorems~\ref{thm:general_theorem_expectation} and~\ref{thm:general_theorem_high_probability}).  A key technical contribution of the paper is the proof of Theorem~\ref{thm:main_theorem_2} that  constructs a sequence of averaging schemes which are `good' with high probability such that the last averaging scheme consists only of the last iterate and hence lets us conclude that the last iterate is `good' with high probability. 

Our new step-size sequence requires that the number of iterations or horizon $T$ is known apriori. In contrast, standard step-size sequences do not require $T$ apriori, and hence guarantee any-time results. Information about $T$ apriori helps us in ensuring that we do not drop step-size too early; only after we are close to the optimum, does the step size drop rapidly. In fact, we conjecture that in absence of apriori information about $T$, {\em no step-size} sequence can ensure the information theoretically optimal error rates for final iterate of SGD. 
As a step towards proving this, we show that in the case of strongly convex objectives, any choice of step sizes with infinite horizon (i.e, without the knowledge of total number of iterations) is either suboptimal almost surely or suboptimal in expectation for infinitely many points. We show this in Theorem~\ref{thm:lower_bounds}.



\textbf{Related Work}: Averaging was used first in the stochastic approximation setting by \cite{polyak1992acceleration} to show optimal rates of convergence. Gradient Descent type methods have been shown to achieve information theoretically optimal error rates in the convex and strongly convex settings when averaging of iterates is used (\cite{nemirovsky1983problem},\cite{zinkevich2003online},\cite{cesa2004generalization}, \cite{kakade2009generalization}, Epoch GD in \cite{hazan2014beyond} , SGD \cite{rakhlin2012making} and \cite{lacoste2012simpler}). The question of the last iterate was first considered in \cite{shamir2013stochastic} and 
it gives a bound of $O(\frac{\log{T}}{\sqrt{T}})$ and $O(\frac{\log{T}}{T})$ in expectation for the general case and strongly convex case respectively. \cite{harvey2018tight} show matching high probability bounds and show that for the standard step sizes ($O\left(\frac{1}{\sqrt{t}}\right)$ in the general case and $O\left(\frac{1}{t}\right)$ in the strongly convex case), the logarithmic-suboptimal bounds are tight.

\textbf{Organization}:
The setting and main results are presented in Section~\ref{sec:main_results}. In particular, Section~\ref{subsubsec:general_step_modification} describes the general step size modification considered and states key results regarding this modification and the lower bound is presented in Section~\ref{sec:lower_bounds_stat}. Key technical ideas are developed in Section~\ref{sec:proofs} and the main theorems are proved. We present some experimental results in Section~\ref{sec:simulations} and conclude in Section~\ref{sec:disc}. Skipped proofs of technical lemmas are given in the appendix.


\section{Problem Setup and Main Results}
\label{sec:main_results}
Consider the following optimization problem:
\begin{equation}
\min_{x\in \mathcal{W}} F(x),
\end{equation}
where objective function $F :\mathbb{R}^d \to \mathbb{R}$ is a convex function and $\mathcal{W}\subset \mathbb{R}^d$ is  a closed convex set.  Let the global minimizer of $F(\cdot)$ be  $x^{*} \in \mathcal{W}$. We start the SGD algorithm at a point $x_1 \in \mathcal{W}$ and iteratively obtain estimates $x_t$ for the minimizer of $F(\cdot)$. We assume that at each time step, we have access to independent, unbiased estimate $\hat{g}_t$ to a subgradient $g_t \in \partial F$. That is, $\mathbb{E}[\hat{g}_t (x)]= g_t (x) \in \partial F(x)$ for every $x \in \mathcal{W}$ and $(\hat{g}_t-g_t)_{t=1}^{T}$ are independent. We pick step sizes $(\alpha_t)_{t=1}^{T} \geq 0$.  Let $\Pi_{\mathcal{W}}$ be the projection operator to the set $\mathcal{W}$. The SGD algorithm is given as follows:


\begin{algorithm}
\caption{Stochastic Gradient Descent}
\label{alg:net}
\KwIn{ total time $T$ and step sizes $\alpha_t$}
\KwOut{$x_T$}
\For{$t\leftarrow 1$ \KwTo $T$}{
$x_{t+1} \leftarrow \Pi_\mathcal{W}\left(x_{t} - \alpha_t  \hat{g}_{t}(x_{t})\right)\,.$
}
\end{algorithm}

Henceforth, we will retain the assumptions made above. Whenever we use $g_t(x)$, it is implied that $g_t (x) \in \partial F(x)$. Throughout the paper, we assume that $F$ is a Lipschitz continuous convex function. 
\begin{assumption}[Lipschitz Continuity]\label{as:lc}
	$F:\mathbb{R}^d \rightarrow \mathbb{R}$ is $G$-Lipschitz continuous convex function over closed convex set $\mathcal{W}$, i.e., $\|g(x)\| \leq G$ for every $x \in \mathcal{W}$ and every $g (x) \in \partial F(x)$. Furthermore, the stochastic gradients $\hat{g}$ satisfy: $\|\hat{g}(x)\|\leq G$ almost surely for every $x\in \mathcal{W}$. 
\end{assumption}
\begin{assumption}[Closed and bounded set]\label{as:dia}
	Diameter of closed convex set $\mathcal{W}$ is bounded by $D$, i.e., $\mathrm{diam}(\mathcal{W}) \leq D$. 
\end{assumption}
\begin{assumption}[Strong convexity]\label{as:sc}
Let $\lambda > 0$. A convex function $F$ is said to be $\lambda$ strongly convex over $\mathcal{W}$ iff
$F(y) \geq F(x) + \langle \nabla F(x), y-x\rangle + \frac{\lambda}{2} \|y-x\|^2 \; \forall \; x,y\in \mathcal{W}$.
\end{assumption}

\noindent{\bf Step size sequence for general convex functions}: we first define, \begin{equation}\label{eq:def_Ti}k  :=\inf\{i : T\cdot 2^{-i} \leq 1\},\ \  T_i := T - \lceil T\cdot 2^{-i}\rceil,\ 0\leq i\leq k, \  \ \mbox{ and } T_{k+1} := T.\end{equation}Clearly, $0 = T_0 < T_1 <\dots < T_k  = T-1 < T_{k+1} = T $. We note in particular that $T_1 \approx \frac{T}{2}$. Let $C >0$ be arbitrary. Then, we choose the step size $\alpha_t$ as follows:
\begin{equation}
\alpha_t = \frac{C\cdot 2^{-i}}{\sqrt{T}} \quad \text{when } T_i <t \leq T_{i+1}, 0\leq i \leq k.
\label{eq:weak_step_size}
\end{equation} 
The theorem below provides suboptimality guarantee for the SGD algorithm  with the step-size sequence mentioned above. 

\begin{theorem}[SGD/GD Last Point for General Convex Functions]
Let Assumptions~\ref{as:lc} and \ref{as:dia} hold. Given $T\geq 4$, let $x_1,\dots, x_T$ be the iterates of SGD (Algorithm~\ref{alg:net}) with step size $\alpha_t$ as defined in Equation~\eqref{eq:weak_step_size}. Then, the following holds for all $T\geq 4$: 
$$\mathbb{E}[F(x_T)] \leq F(x^*) + \frac{4D^2}{C\sqrt{T}} + \frac{11G^2C}{\sqrt{T}} \,.$$ 
In particular, if we choose $C = \frac{D}{G}$, we have: $\mathbb{E}F(x_T) \leq F(x^*) +\frac{15GD}{\sqrt{T}} \,.$ Furthermore, the following holds w.p. $\geq 1-\delta$ for any $0 < \delta < \frac{1}{e}$: 
$$F(x_T) = F(x^*) + O\left(\frac{D^2}{C\sqrt{T}} + \frac{CG^2}{\sqrt{T}} \log\left(\tfrac{1}{\delta}\right)\right)\leq F(x^*) + O\left(DG\sqrt{\tfrac{\log{\tfrac{1}{\delta}}}{T}}\right).$$
Finally, under the same assumptions, GD update $(x_{t+1}=\Pi_{\mathcal{W}}(x_t-\alpha_t \nabla F(x_t)))$ with the same step-size sequence given in \eqref{eq:weak_step_size} also ensures the following after $T$ iterations: 
$$F(x_T) \leq F(x^*) + \frac{4D^2}{C\sqrt{T}} + \frac{11G^2 C}{\sqrt{T}} \,.$$ 
\label{thm:main_theorem_1}
\end{theorem}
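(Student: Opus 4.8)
The plan is to obtain Theorem~\ref{thm:main_theorem_1} as a corollary of the general step-size modification results (Theorems~\ref{thm:general_theorem_expectation} and~\ref{thm:general_theorem_high_probability}), instantiated at the ``base'' step-size sequence $\alpha_t^{\mathrm{base}} = C/\sqrt{T}$, constant in $t$. The sequence defined in \eqref{eq:weak_step_size} is exactly the doubling modification of this constant sequence: on the block $(T_i,T_{i+1}]$, whose length is $\approx T\cdot 2^{-(i+1)}$, the step is scaled down by $2^{-i}$, and the final block is the singleton $\{T\}$. The classical analysis of projected (sub)gradient descent with a constant step $\beta$ gives, for the uniform average, $\frac{1}{T}\sum_{t=1}^{T}\mathbb{E}[F(x_t)] - F(x^*) \le \frac{D^2}{2\beta T} + \frac{\beta G^2}{2}$, which at $\beta = C/\sqrt T$ equals $\frac{D^2}{2C\sqrt T} + \frac{CG^2}{2\sqrt T}$. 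The transfer theorem asserts that the last iterate of the modified run inherits this bound up to a universal constant, which is precisely the claimed $\frac{4D^2}{C\sqrt T} + \frac{11G^2C}{\sqrt T}$; taking $C = D/G$ then gives $\frac{15GD}{\sqrt T}$.

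Rather than use the transfer as a black box, the mechanism I would make explicit is the following. First record the one-step inequality $\mathbb{E}\|x_{t+1}-x^*\|^2 \le \mathbb{E}\|x_t-x^*\|^2 - 2\alpha_t(\mathbb{E}[F(x_t)]-F(x^*)) + \alpha_t^2 G^2$ (which holds deterministically, with $\nabla F$ in place of $\hat g$, for the GD statement). Summing it over a suffix $\{T-m+1,\dots,T\}$ contained in a single block controls the suffix average $A_m := \frac1m\sum_{t=T-m+1}^{T}(\mathbb{E}[F(x_t)]-F(x^*))$ by $\frac{D^2}{2\alpha_T m} + \frac{\alpha_T G^2}{2}$, using that $\alpha_t$ is non-increasing in $t$ and Assumptions~\ref{as:lc},~\ref{as:dia}. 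Then apply the Shamir--Zhang-type telescoping that writes $\mathbb{E}[F(x_T)]-F(x^*)$ as a suffix average over the final block plus a weighted combination of the $A_j$'s, leaving an error term of the form $\tfrac{G^2}{2}\sum_{j\ge 1}\tfrac{\alpha_{T-j}}{j}$ (up to constants and reindexing).

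The crux, and the step I expect to be the main obstacle, is showing this error term is $O(CG^2/\sqrt T)$ and not $O(CG^2\log T/\sqrt T)$; this is exactly what the doubling construction buys. The iterate $x_{T-j}$ lies in block $i$ with $2^{-i}\asymp j/T$, so its step size is $\alpha_{T-j} = C2^{-i}/\sqrt T \asymp Cj/T^{3/2}$; hence $\sum_{j=1}^{T-1}\alpha_{T-j}/j \asymp \sum_{j}C/T^{3/2}\asymp C/\sqrt T$, the factor $j$ cancelling the $1/j$ that otherwise generates the logarithm. In the equivalent block picture, the per-block noise contributions are of order $\beta_i G^2\asymp C2^{-i}G^2/\sqrt T$, a geometric series dominated by its first term, while the distance-squared terms $\mathbb{E}\|x_{T_i+1}-x^*\|^2$ telescope across blocks; summing everything, absorbing the additive slack coming from the $O(\log T)$ blocks, and checking the small-$T$ ($T\ge 4$) behaviour of the ceilings in \eqref{eq:def_Ti} yields the stated constants.

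For the high-probability statement I would run the same argument but retain the martingale difference sequence $\sum_t \alpha_t\langle \hat g_t(x_t)-g_t(x_t),\,x^*-x_t\rangle$, whose increments are bounded by $2\alpha_t GD$; an Azuma--Hoeffding bound on each relevant suffix sum contributes terms of order $GD\sqrt{\log(1/\delta)\sum_t\alpha_t^2}$, and since the telescoping uses $O(\log T)$ suffixes one takes a union bound over them — this, combined with the $\sum_j\alpha_{T-j}/j$ weighting, is why the clean form carries a $\log(1/\delta)$ rather than $\sqrt{\log(1/\delta)}$ on the variance term (cf.\ the construction of ``good averaging schemes'' behind Theorem~\ref{thm:main_theorem_2}). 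The GD bound is just the noise-free specialization of the expectation argument and needs no probabilistic input.
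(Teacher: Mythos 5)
Your first paragraph is exactly the paper's proof: Theorem~\ref{thm:main_theorem_1} is obtained from Theorem~\ref{thm:general_theorem_expectation} and Theorem~\ref{thm:general_theorem_high_probability} by taking the base sequence $\gamma_t \equiv C/\sqrt T$ (so the decay constant is $\beta = 1$), bounding $\inf_{\lceil T/4\rceil \leq t\leq T_1}\mathbb{E}[F(y_t)-F(x^*)]$ by the uniform average over $[\lceil T/4\rceil, T_1]$ via the standard constant-step analysis, and noting that GD is the $\hat g = \nabla F$ special case so the bound in expectation becomes deterministic. That part is a match.

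Where you diverge is in the ``explicit mechanism,'' and the route you sketch differs from the paper's in a way worth flagging. The paper does \emph{not} unroll a Shamir--Zhang telescoping over all $T$ suffixes. Its engine is Lemma~\ref{lem:good_point_transfer}: define $\tau_i := \arg\min_{T_i < t \leq T_{i+1}}\mathbb{E}[F(x_t)]$, apply the Zhang-style look-ahead inequality (Lemma~\ref{lem:look_ahead}) on the window $[\tau_i, T_{i+2}]$, use that every $t\in(T_i,T_{i+1}]$ has $\mathbb{E}[F(x_t)]\geq \mathbb{E}[F(x_{\tau_i})]$ so only the $(T_{i+1},T_{i+2}]$ portion of the sum can be positive, and thereby bound $\mathbb{E}[F(x_{\tau_{i+1}})-F(x_{\tau_i})] \lesssim 2^{-i}G^2\gamma_T$. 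Then $\mathbb{E}[F(x_T)]=\mathbb{E}[F(x_{\tau_{k+1}})]$ telescopes over $O(\log T)$ \emph{blocks} (not $T$ suffixes), and the geometric series absorbs the log. Your back-of-envelope calculation $\sum_j \alpha_{T-j}/j \asymp \sum_j C/T^{3/2} \asymp C/\sqrt T$ (because the doubling makes $\alpha_{T-j}\asymp Cj/T^{3/2}$) is correct and your route would also work, but two sentences of your writeup are confused: suffixes $\{T-m+1,\dots,T\}$ for $m>1$ are \emph{not} contained in a single block (the last block is the singleton $\{T\}$), and the ``distance-squared terms $\mathbb{E}\|x_{T_i+1}-x^*\|^2$ telescope across blocks'' picture controls weighted averages within blocks, not the function value at the last iterate — the $\tau_i$ device exists precisely to convert block averages into a comparison of specific iterates. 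For the high-probability claim, the paper does not use Azuma plus a union bound: Lemma~\ref{lem:look_ahead_high_probability} builds an exponential super-martingale with weights $L_t$ chosen so that the $\|x_t-x_l\|^2$-dependence in the quadratic-variation term is absorbed by a telescoping $(L_t-L_{t-1})D_t$ term; this self-bounding step is what produces the $G^2\log(1/\delta)$ form directly (not the weighting $\sum_j\alpha_{T-j}/j$, as you suggest). It is then combined with the good-point-transfer structure through the carefully constructed distribution $\kappa$ of Lemma~\ref{lem:good_point_transfer_high_probability}, and the union bound uses $\delta_i = \delta/2^{i+2}$ so that the $i$-dependent part of $\log(1/\delta_i)$ is killed by the $2^{-i}$ factor. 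Your Azuma route would give the $DG\sqrt{\log(1/\delta)/T}$ form but would need to contend both with the martingale increments' range depending on the current iterate and with the accumulation of $\log\log T$ from a naive union bound — the paper's $L_t$ construction and geometric $\delta_i$ allocation handle exactly these two points.
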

We will prove this theorem in Section~\ref{sec:proofs} after developing some general ideas.

{\bf Remarks:} (1) Note that the bounds on sub-optimality (for SGD and GD) are information theoretically optimal up to constants.\\
(2) Our result on the expected sub-optimality improves upon that of~\cite{shamir2013stochastic} by a multiplicative $\log T$ factor and our result on the high probability sub-optimality improves upon~\cite{harvey2018tight} by a multiplicative factor of $\log T \sqrt{\log \frac{1}{\delta}}$. On the other hand, our step-size sequence requires apriori knowledge of $T$. We conjecture that for any-time algorithm (i.e., without apriori knowledge of $T$) expected error rate of $\frac{GD\log T}{\sqrt{T}}$ is information theoretically optimal. \\
(3) The rate obtained above for last point of GD (in the deterministic setting) is also optimal in the gradient oracle model and to the best of our knowledge, is the first such result for last point of GD.

{\bf Step size sequence for strongly-convex functions}: Let $F(\cdot)$ be $\lambda$ strongly convex (Assumption~\ref{as:sc}). Let $k  :=\inf\{i : T\cdot 2^{-i} \leq 1\}$. We pick $\alpha_{t}$ as follows:
\begin{equation}
\alpha_t = 2^{-i}\frac{1}{\lambda t}, \quad \forall\ T_i <t \leq T_{i+1}, 0\leq i \leq k.
\label{eq:strong_step_size}
\end{equation}
We now present our result for last point of SGD with strong-convexity assumption. 
\begin{theorem}[SGD Last Point for Strongly Convex Functions]
\label{thm:main_theorem_2}
Let $F$ satisfy Assumptions~\ref{as:lc} and~\ref{as:sc}. Then the following holds for the $T$-th iterate of the SGD algorithm (Algorithm~\ref{alg:net}) when run with the step size sequence given in Equation~\eqref{eq:strong_step_size}: 
$$\mathbb{E}[F(x_T)] \leq F(x^*) + \frac{130G^2}{\lambda T}.$$
Furthermore, the following holds for all $0<  \delta \leq 1/e$ with probability at least $1-\delta$: 
$$\mathbb{E}[F(x_T)] = F(x^*) + O\left(\frac{G^2\log(\tfrac{1}{\delta})}{\lambda T}\right).$$
Under the same assumptions, GD update $(x_{t+1}=\Pi_{\mathcal{W}}(x_t-\alpha_t \nabla F(x_t)))$ with the same step-size sequence given in \eqref{eq:strong_step_size} also ensures the following after $T$ iterations: 
$$F(x_T) \leq F(x^*) + \frac{130G^2 }{{\lambda T}} \,.$$ 
\end{theorem}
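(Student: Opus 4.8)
I will deduce Theorem~\ref{thm:main_theorem_2} from the general step-size modification results, Theorem~\ref{thm:general_theorem_expectation} (for the in-expectation bound) and Theorem~\ref{thm:general_theorem_high_probability} (for the high-probability bound), applied to the classical strongly-convex base schedule $\beta_t = \tfrac{1}{\lambda t}$. The first step is to verify that the sequence in \eqref{eq:strong_step_size} is exactly the image of $\beta_t$ under the modification map of Section~\ref{subsubsec:general_step_modification}: on the trailing dyadic window $(T_i,T_{i+1}]$ (with $T_i$ as in \eqref{eq:def_Ti}) the base value $\tfrac{1}{\lambda t}$ is scaled by $2^{-i}$, which is precisely $\alpha_t = 2^{-i}/(\lambda t)$. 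The second step is to recall the known fact that under Assumptions~\ref{as:lc} and~\ref{as:sc}, SGD run with $\beta_t = \tfrac{1}{\lambda t}$ admits an averaging scheme — e.g. a $t$-weighted average of a constant-fraction suffix of the iterates — whose expected suboptimality is at most $c\,G^2/(\lambda T)$ for an explicit absolute constant $c$. Feeding this into Theorem~\ref{thm:general_theorem_expectation} transfers the bound to the last iterate $x_T$ of SGD with the modified schedule \eqref{eq:strong_step_size}, and propagating $c$ through the modification yields the stated constant $130$.

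To pin down the constant rather than treat Theorem~\ref{thm:general_theorem_expectation} as a black box, I would reproduce its internal mechanism on this schedule. Writing $\rho_t := \mathbb{E}\|x_t - x^*\|^2$ and $\delta_t := \mathbb{E}[F(x_t)-F(x^*)]$, the projected update together with Assumption~\ref{as:lc}, unbiasedness, and $\lambda$-strong convexity gives, for every $t$,
\[
2\alpha_t\,\delta_t \;\le\; (1-\lambda\alpha_t)\,\rho_t \;-\; \rho_{t+1} \;+\; \alpha_t^2 G^2 .
\]
On the $i$-th window the schedule lies within a factor $2$ of the constant $2^{-i}/(\lambda T)$ (since $t>T_i$ and $T_i \ge (1-2^{-i})T-1$), while the window has length $T_{i+1}-T_i = \lceil T2^{-i}\rceil-\lceil T2^{-(i+1)}\rceil$, comparable to the length $T-T_{i+1}=\lceil T2^{-(i+1)}\rceil$ of the iterates that follow it; thus each trailing window is, up to constants, the second half of the previous one. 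Summing the displayed inequality over a window produces (i) a contraction estimate $\rho_{T_{i+1}+1}\le c_1\rho_{T_i+1}+c_2\,2^{-i}G^2/(\lambda^2 T)$, which chained from $\rho_{T_1+1}=O(G^2/(\lambda^2 T))$ (the plain $1/(\lambda t)$ schedule on $(0,T_1]$ already forces this) gives $\rho_{T_i+1}=O(G^2/(\lambda^2 T))$ for all $i$; and (ii) a window-to-suffix inequality in the spirit of~\cite{shamir2013stochastic} bounding the suboptimality of the trailing-half average by that of the full-window average plus an additive error. The point of the dyadic construction is that this additive error decays geometrically in $i$, so summing the $k=O(\log T)$ window-to-suffix inequalities from the whole run down to $\delta_T$ incurs only a convergent geometric series, hence an $O(1)$ factor — this is precisely where the $\log T$ of the standard schedule is removed.

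For the high-probability statement I would invoke Theorem~\ref{thm:general_theorem_high_probability} in the same manner, and here lies the genuine obstacle flagged in the introduction: one cannot union-bound the events ``the average over trailing window $j$ is good'' across $j$, because those averages sit deep inside the trajectory and conditioning on earlier good events breaks the martingale structure needed to control them. Instead I would follow the paper's construction of a nested family of averaging schemes whose supports shrink to $\{x_T\}$, show that each scheme is good with high probability \emph{conditionally} on its predecessor being good — controlling the martingale differences over the single window $(T_j,T_{j+1}]$ via Assumption~\ref{as:lc} and a Freedman-type tail bound — and then compose these $O(\log T)$ conditional guarantees; since $\delta\le 1/e$, neither the total failure probability nor the accumulated $\log(1/\delta)$ dependence compounds beyond the stated order $O\big(G^2\log(1/\delta)/(\lambda T)\big)$. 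The delicate part is making each ``good event'' simultaneously measurable at the right time, strong enough to seed the next stage, and cheap enough that the constants and the $\log(1/\delta)$ factor survive the composition. Finally, the GD claim is the noiseless specialization: setting $\hat g_t=\nabla F(x_t)$ renders every expectation above vacuous and kills the martingale-difference terms identically, so the same window telescoping yields $F(x_T)\le F(x^*)+\tfrac{130G^2}{\lambda T}$ as a deterministic inequality, using only Assumptions~\ref{as:lc} and~\ref{as:sc}.
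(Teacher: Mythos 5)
Your top-level route matches the paper exactly: identify \eqref{eq:strong_step_size} as the modification of the base schedule $\gamma_t = \tfrac{1}{\lambda t}$, invoke an external bound (the paper uses Theorem~5 of \cite{rakhlin2012making}) to show $\inf_{\lceil T/4\rceil \le t \le T_1}\mathbb{E}[F(y_t)-F(x^*)] \le \tfrac{30 G^2}{\lambda T}$ for the unmodified schedule, and then feed this into Theorem~\ref{thm:general_theorem_expectation}; for the tail bound, invoke the \cite{harvey2018tight} high-probability suffix-averaging result with $q^{(0)}$ uniform on $\{\lceil T/4\rceil,\dots,T_1\}$ and apply Theorem~\ref{thm:general_theorem_high_probability}. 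The GD claim is the deterministic specialization, as you say. That part of the proposal is the paper's proof.

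However, your ``internal mechanism'' digression misrepresents how Theorem~\ref{thm:general_theorem_expectation} actually works, and if you tried to execute it as written it would not reproduce the paper's argument. You write a per-step recursion in $\rho_t := \mathbb{E}\|x_t - x^*\|^2$ that carries a $(1-\lambda\alpha_t)$ contraction factor from strong convexity, and then chain $\rho_{T_i+1}$ across windows. That is the \emph{standard} strongly-convex SGD analysis, and it is only used once, to certify the Rakhlin-type average bound for the unmodified first half. The paper's transfer mechanism, Lemma~\ref{lem:look_ahead} and Lemma~\ref{lem:good_point_transfer}, never touches $x^*$ and never uses strong convexity: it compares iterates in window $i{+}1$ to the best-in-expectation iterate $x_{\tau_i}$ of the \emph{previous} window, using plain convexity and the Zhang/Shamir--Zhang telescoping trick with a moving reference point. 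This is exactly what makes the general theorem assumption-light (it works for the non-strongly-convex case of Theorem~\ref{thm:main_theorem_1} with the same machinery) and is the crux of removing the $\log T$: the geometric series comes from comparing each window to the preceding one, not from a contraction of $\|x_t - x^*\|^2$. You should also make the decay constant explicit: for $\gamma_t = 1/(\lambda t)$ one has $\gamma_{2t} = \tfrac{1}{2}\gamma_t$, so $\beta = \tfrac{1}{2}$ in Assumption~\ref{as:decay} (the paper's ``$\beta=2$'' is a typo, since Assumption~\ref{as:decay} requires $0<\beta\le 1$), giving a concrete constant when substituted into $5G^2\gamma_T\left(\tfrac{1}{\beta^2}+\tfrac{1}{\beta^4}\right)$. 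Finally, for the high-probability bound you are describing the internals of Lemma~\ref{lem:good_point_transfer_high_probability} and Theorem~\ref{thm:general_theorem_high_probability}; at the level of proving Theorem~\ref{thm:main_theorem_2} those are already available as black boxes and need only be combined with the external suffix-average tail bound via a union bound.
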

Here again, we note that the result is information theoretically optimal up to $\log(1/\delta)$ factor.

\subsection{General Step Size Modification}
\label{subsubsec:general_step_modification}
Theorems~\ref{thm:main_theorem_1} and ~\ref{thm:main_theorem_2} are consequences of our general results on step size modification that we present below. Consider SGD step size sequence $(\gamma_t)_{t=1}^{T}$. We obtain modified step size sequence $(\alpha_t)_{t=1}^{T}$ as follows:
\begin{equation}
\alpha_t := 2^{-i}\gamma_t \quad \forall\ T_i < t \leq T_{i+1} \text{ and } 0\leq i \leq k.
\label{eq:modified_step_size}
\end{equation}

Under certain mild conditions, we will show that the last iterate of SGD with step size $\alpha_t$ is as good as the average iterate of SGD with step size $\gamma_t$. We make these notions precise below:

\begin{assumption}[Slowly Decreasing Step Size Sequence]\label{as:decay}
We call a step size sequence $(\gamma_t)$ `decreasing' if $\gamma_{t+1}\leq \gamma_t$. We say that step size sequence $\gamma_t$ has `at most polynomial decay' with decay constant $0<\beta \leq 1$ if $\gamma_{2t} \geq \beta\gamma_{t}$ for every $t \geq 1$.
\end{assumption}

We have the following general theorem:
\begin{theorem}
\label{thm:general_theorem_expectation}
Let $(\gamma_t)_{t=1}^T$ be a decreasing step size sequence with at most polynomial decay with decay constant $ 0<\beta \leq 1$. Let the iterates of SGD with step size $\gamma_t$ be $y_1, \dots, y_T$. Let $\alpha_t$ be the modification of $\gamma_t$ as defined in Equation~\eqref{eq:modified_step_size}. Let the iterates of SGD with step size $\alpha_t$ be $x_1, \dots, x_T$. Then, for all $T \geq 4$, we have:  
$$\mathbb{E}[F(x_T)] \leq  5G^2\gamma_T\left(\frac{1}{\beta^2} + \frac{1}{\beta^4}\right) + \inf_{ \lceil \frac{T}{4}\rceil \leq t \leq T_1}\mathbb{E}[F(y_t)]\,.$$
\end{theorem}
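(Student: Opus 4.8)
The plan is to exploit the block structure of the modified step-size sequence. Partition time into the blocks $B_i := \{t : T_i < t \le T_{i+1}\}$ for $0 \le i \le k$; on block $B_i$ the modified step size is $\alpha_t = 2^{-i}\gamma_t$, and by construction the block lengths roughly halve: $|B_{i+1}| = \lceil T2^{-i}\rceil - \lceil T2^{-i-1}\rceil \approx T2^{-i-1}$, while $B_k$ is a single point $\{T\}$. The idea is to show inductively that at the \emph{end} of each block, the suboptimality of $x_{T_{i+1}}$ is controlled. Concretely, I would run the standard SGD regret/potential argument of the form $\mathbb{E}\|x_{t+1}-x^*\|^2 \le \mathbb{E}\|x_t - x^*\|^2 - 2\alpha_t \mathbb{E}[F(x_t) - F(x^*)] + \alpha_t^2 G^2$ on each block separately, using that on $B_i$ the step size is constant-up-to-the-$\gamma_t$-decay and that convexity lets one lower bound the average of $F(x_t)-F(x^*)$ over the block by $F(\bar x_{B_i}) - F(x^*)$ where $\bar x_{B_i}$ is the block average (or, for the final bound, by the \emph{last} value via a Shamir--Zhang-type telescoping within the block).

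The key mechanism — and the reason the modification works — is a \emph{suffix-averaging inside each block}. Within block $B_i$, the step sizes are all within a factor $\beta$ (or $\beta^2$) of one another because $\gamma$ has at most polynomial decay and the block spans a factor-$2$ range of indices; so running the classical suffix-averaging analysis on $B_i$ shows that $\mathbb{E}[F(x_{T_{i+1}}) - F(x^*)]$ is bounded by roughly $\mathbb{E}[F(x_{T_i}) - F(x^*)] \cdot(\text{contraction from halving})$ plus an additive term of order $G^2 \gamma_{T_i} 2^{-i}$ (the step size times $G^2$, with the $1/\beta$ factors from relating $\gamma$ at different points in the block). Summing this recursion over $i = 1,\dots,k$: the additive terms form a geometric-like series $\sum_i G^2 \gamma_{T_i} 2^{-i}$, which telescopes to $O(G^2 \gamma_T(1/\beta^2 + 1/\beta^4))$ after using $\gamma_{T_i} \le \gamma_{T}/\beta^{\,k-i}$-type bounds turned around via $\gamma_{2t}\ge\beta\gamma_t$ and the fact that $T - T_i \asymp T2^{-i}$ so $\gamma_{T_i}$ and $2^{-i}$ combine favorably; and the "starting" term reduces to $\inf_{\lceil T/4\rceil \le t \le T_1} \mathbb{E}[F(y_t) - F(x^*)]$ by comparing the $x$-iterates on the first block $B_0 = \{1,\dots,T_1\}$ with the $y$-iterates — note $\alpha_t = \gamma_t$ on $B_0$ since $i=0$, so $x_t = y_t$ for $t \le T_1$, which is exactly why the $y$-process appears only through indices up to $T_1$.

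The main obstacle I anticipate is bookkeeping the constants and the $1/\beta$ powers carefully enough to land the clean $5G^2\gamma_T(\beta^{-2}+\beta^{-4})$ bound: each step of relating $\gamma$ at one end of a block to the other end costs a factor $\beta^{-1}$ or $\beta^{-2}$, the geometric sum over blocks costs another, and one must verify these compose to exactly the stated powers rather than something larger. A secondary subtlety is the within-block suffix-averaging step: one needs the block to be long enough (at least a constant fraction of its "natural" length) for the averaging bound to beat the additive error, which is why $T \ge 4$ is assumed and why $B_0$ contributes the $\inf$ over $[\lceil T/4\rceil, T_1]$ rather than over all of $[1,T_1]$ — the infimum lets us pick the best suffix-start and absorb the block's own error. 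I would handle the induction by first proving a clean one-block lemma (constant step size $\alpha$ on an interval of length $L$: last iterate error $\lesssim$ error at start of interval scaled by $1/L$ times an averaging factor, plus $\alpha G^2$), then instantiate it $k$ times and sum.
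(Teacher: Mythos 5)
Your high-level skeleton (block decomposition, telescoping over blocks, and noting $x_t = y_t$ for $t \le T_1$) matches the paper, but the mechanism you propose for passing information from one block to the next does not work. You anchor the potential argument at $x^*$, i.e.\ you track $\mathbb{E}\|x_t - x^*\|^2$, and posit that $\mathbb{E}[F(x_{T_{i+1}}) - F(x^*)]$ is a contraction of $\mathbb{E}[F(x_{T_i}) - F(x^*)]$ plus a small additive term. In the non-strongly-convex setting there is no such contraction: the potential recursion on block $B_{i+1}$ bounds the block-averaged suboptimality by $\tfrac{\mathbb{E}\|x_{T_{i+1}+1}-x^*\|^2}{2\alpha L}+\tfrac{\alpha G^2}{2}$ with $\alpha\approx 2^{-i-1}\gamma_T$ and $L\approx T2^{-i-1}$, and the distance term is not controlled by the previous block's function-value gap; the best available bound is $\le D^2$, so the first term is $\approx 4^{i+1}D^2/(\gamma_T T)$, which grows geometrically in $i$ and destroys the argument.

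The crucial idea you do not articulate is to change the anchor: Lemma~\ref{lem:look_ahead} tracks $\|x_t - x_{t_0}\|^2$ for a fixed iterate $x_{t_0}$ rather than $x^*$, giving $\sum_{t=t_0}^{t_1} 2\alpha_t\,\mathbb{E}[F(x_t)-F(x_{t_0})] \le G^2\sum_{t=t_0}^{t_1}\alpha_t^2$ with no $D$-dependence. The second key idea is to take $t_0=\tau_i:=\arg\min_{T_i<t\le T_{i+1}}\mathbb{E}[F(x_t)]$ --- the \emph{best} iterate of block $i$, not its endpoint --- and observe that the summands with $t$ still in block $i$ are nonnegative in expectation and can be dropped, leaving a bound on the block-$(i+1)$ average, hence on $\mathbb{E}[F(x_{\tau_{i+1}})-F(x_{\tau_i})]$. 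The resulting increment $\mathbb{E}[F(x_{\tau_{i+1}})-F(x_{\tau_i})]\lesssim G^2\gamma_T 2^{-i}/\beta^2$ is purely additive (no contraction is available or needed), and the telescope closes because the last block is the singleton $\{T\}$, so $\tau_{k+1}=T$. Your plan of controlling the \emph{endpoint} of each block cannot succeed for intermediate blocks, while the argmin trick sidesteps this entirely; you mention ``Shamir--Zhang-type telescoping'' in passing but leave unstated the choice of anchor and the argmin over each block, and these are precisely the two ideas that make the proof go through.
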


We also give a high probability version of Theorem~\ref{thm:general_theorem_expectation}.
\begin{theorem}
\label{thm:general_theorem_high_probability}
Let $T \geq 4$. Let $q^{(0)}$ be any arbitrary fixed probability distribution over the set $\{\lceil\tfrac{T}{4}\rceil,\dots, T_1\}$.  With probability atleast $1-\frac{\delta}{2}$, we have: 
\begin{align*}
F(x_T) &\leq   \gamma_T \cdot G^2\left(120\log(\tfrac{1}{\delta}) + 400\right) \cdot \left(\frac{1}{\beta^2} + \frac{1}{\beta^4}\right) + \sum_{s = \lceil \tfrac{T}{4}\rceil}^{T_{1}} q^{(0)}(s)F(y_s).
\end{align*}
\end{theorem}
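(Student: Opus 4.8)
The plan is to prove the bound by constructing a short sequence of probability distributions (``averaging schemes'') $q^{(0)}, q^{(1)}, \dots, q^{(k)}$ over the iterate indices, where $q^{(0)}$ is the prescribed distribution on $\{\lceil T/4\rceil, \dots, T_1\}$ and $q^{(k)}$ is the point mass at the index $T$, and then controlling how much the averaged suboptimality $\sum_s q^{(i)}(s) F(x_s)$ can grow when passing from $q^{(i)}$ to $q^{(i+1)}$. Telescoping over $i = 0, \dots, k-1$ expresses $F(x_T) = \sum_s q^{(k)}(s) F(x_s)$ in terms of $\sum_s q^{(0)}(s) F(x_s)$ plus the accumulated errors, and since $\alpha_t = \gamma_t$ on phase $0$ (i.e.\ $t \le T_1$) the iterates $x_t$ and $y_t$ coincide there, so $\sum_s q^{(0)}(s) F(x_s) = \sum_{s=\lceil T/4\rceil}^{T_1} q^{(0)}(s) F(y_s)$, the right-hand term in the statement. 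The point of the damped schedule is that on phase $i$ (the block $T_i < t \le T_{i+1}$, of length $\approx T 2^{-i-1}$) the step sizes are scaled by $2^{-i}$; arranging $q^{(i)}$ to be supported on phase $i$, the error incurred at step $i$ will be $O\!\big(2^{-i} G^2 \gamma_T\big)$ up to a $\beta$-dependent factor (using Assumption~\ref{as:decay}: $\gamma_t \le \gamma_T$ on phase $0$ and $\gamma_t \le \gamma_T/\beta$ on phases $i \ge 1$), whence the sum of these errors is a convergent geometric series -- this is exactly how the spurious $\log T$ factor is removed.

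For the deterministic part of the per-step recursion I would use the standard projected-SGD identity: for every $z \in \mathcal{W}$, $\|x_{t+1} - z\|^2 \le \|x_t - z\|^2 - 2\alpha_t \langle \hat{g}_t, x_t - z\rangle + \alpha_t^2 G^2$, together with convexity $\langle g_t, x_t - z\rangle \ge F(x_t) - F(z)$. Letting $z$ run over the iterates in the support of $q^{(i)}$ and averaging against $q^{(i)}$, a suffix-averaging manipulation in the spirit of \cite{shamir2013stochastic} produces, for the appropriately shifted-and-spread distribution $q^{(i+1)}$ on phase $i+1$,
$$\sum_s q^{(i+1)}(s) F(x_s) \;\le\; \sum_s q^{(i)}(s) F(x_s) \;+\; c\, 2^{-i} G^2 \gamma_T \Big(\tfrac{1}{\beta^2} + \tfrac{1}{\beta^4}\Big) \;+\; M_i,$$
for an absolute constant $c$, where $M_i$ collects the stochastic terms, each of the form $\sum_t (\text{weight}_t)\langle \hat{g}_t - g_t, x_t - z_t\rangle$ -- a martingale-difference sum with respect to the natural filtration, with increments bounded through $\alpha_t$, $G$ and $D$. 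In the expectation statement (Theorem~\ref{thm:general_theorem_expectation}) one simply takes expectations, the $M_i$ disappear, and one may replace $\sum_s q^{(0)}(s)\mathbb{E}[F(y_s)]$ by the infimum over $s$; the extra content here is to control all the $M_i$ simultaneously with high probability, which forces us to fix $q^{(0)}$ in advance rather than choosing the best iterate after the fact.

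For the concentration I would apply a Freedman/Azuma-type inequality to each $M_i$ separately, allocating failure probability $\delta_i := \delta\, 2^{-(i+2)}$ to phase $i$, so the union bound costs at most $\sum_{i=0}^{k-1} \delta_i \le \delta/2$. Two features keep the constants tight: (i) both the range and the conditional-variance proxy of $M_i$ scale like $2^{-i}$ (phase $i$ is both shorter and more damped), and after bounding $\langle \hat{g}_t - g_t, x_t - z\rangle^2 \le 4G^2 \|x_t - z\|^2$ and controlling $\|x_t - z\|^2$ by quantities already present in the recursion, the leading stochastic fluctuation can be absorbed back into the left-hand side, leaving $M_i \lesssim 2^{-i} G^2 \gamma_T \cdot (\text{poly in } 1/\beta) \cdot \log(1/\delta_i)$; and (ii) $\sum_i 2^{-i} \log(1/\delta_i) = \sum_i 2^{-i}\big(\log(1/\delta) + (i+2)\log 2\big) = O(\log(1/\delta))$, so the union bound over the $k = O(\log T)$ phases does not reintroduce a $\log T$, and the $\delta$-dependence stays linear in $\log(1/\delta)$ (not $\sqrt{\log(1/\delta)}$) precisely because the fluctuation was self-bounding. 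Summing the telescoped deterministic errors with these high-probability bounds on the $M_i$ yields the claimed form with constant $120\log(1/\delta) + 400$ and the stated $(\beta^{-2} + \beta^{-4})$ factor.

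I expect the main obstacle to be this combined design: one must define $q^{(i+1)}$ from $q^{(i)}$ (a ``shift to the next phase and mix over a short window'' operation, which in turn pins down the $\{\lceil T/4\rceil, \dots, T_1\}$ support of $q^{(0)}$) so that the chain genuinely terminates at the point mass $\delta_{x_T}$, while simultaneously keeping both the deterministic error and the martingale range/variance at scale $2^{-i}$, and making the variance bound for $M_i$ self-bounding so the Bernstein term is linear in $\log(1/\delta_i)$. Once that bookkeeping is in place, the theorem follows by summing geometric series; but getting the telescoping algebra and the high-probability estimates to respect the $2^{-i}$ scaling at the same time is where essentially all the difficulty lies.
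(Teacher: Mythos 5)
Your outline follows the paper's proof essentially verbatim: a chain of averaging distributions $q^{(0)},\dots,q^{(k)}$ terminating in the point mass at $T$, a per-phase error of order $2^{-i}G^2\gamma_T(\beta^{-2}+\beta^{-4})$, the identical failure-probability allocation $\delta_i=\delta\,2^{-(i+2)}$ with union bound costing $\delta/2$, and the observation that $\sum_i 2^{-i}\log(1/\delta_i)=O(\log(1/\delta))$ so no $\log T$ reappears. The only respect in which you are lighter on detail is the concentration step --- the paper builds a bespoke exponential super-martingale with a carefully chosen geometric weight sequence $L_t$ and an auxiliary distribution $\kappa$ (Lemmas~\ref{lem:look_ahead_high_probability} and~\ref{lem:sums_to_one}) rather than invoking a black-box Freedman inequality --- but your remark that the fluctuation must be absorbed back into the left-hand side to keep the $\log(1/\delta)$ dependence linear is exactly the property this construction delivers, so the approach is the same.
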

That is, the above theorems show that compared to any weighted average of function values of iterates in the $[T/4,T_1]$ iterations, the error is not significantly larger if $\beta$ is reasonably large and $\gamma_T$ is small. Now, using standard analysis, we can ensure small average function value for iterates in $[T/4,T_1]$ iterations. Small value of $\gamma_T$ and bound on $\beta$ hold trivially for standard step-size sequences.

See Section~\ref{sec:proofs} for detailed proofs of the above theorems. We first develop general technique and prove key lemmas in the next section, and then present proofs for all the theorems. 

\subsection{Lower Bounds}\label{sec:lower_bounds_stat}
The step size modification procedure described above assumed the knowledge of the last iterate $T$ (this is not a setback in practice). We study the case of infinite horizon SGD. In this section we state our bounds on the last iterate of `any time' (infinite horizon) SGD in the case of strongly convex objectives. We will first introduce the notion of suboptimality that we consider. In particular, we look at two kinds of `bad performance' in infinite horizon SGD for non-smooth strongly convex optimization. Consider any infinite step size sequence $\gamma_t$.

\begin{enumerate}
\item The sequence $\gamma_t$ is said to be `bad in expectation' if for an objective $F$ satisfying assumptions \ref{as:lc},\ref{as:dia} and \ref{as:sc}, some choice of subgradient oracle, and SGD iterates $(x_t)_{t \in \mathbb{N}}$ with step size $\gamma_t$, there is a fixed subsequence $\{t_k\}_{k \in \mathbb{N}}$ such that $\lim_{k\to \infty} t_k \mathbb{E}[F(x_{t_k})- F(x^*)] = \infty $.
\item The sequence $\gamma_t$ is said to be `bad almost surely' if for an objective $F$ satisfying assumptions \ref{as:lc},\ref{as:dia} and \ref{as:sc}, some choice of subgradient oracle, and SGD iterates $(x_t)_{t \in \mathbb{N}}$ with step size $\gamma_t$, with probability $1$ there exists a random infinite sequence of times $\{t_k\}$ such that $\lim_{k\to \infty} t_k [F(x_{t_k})- F(x^*)] = \infty$
\end{enumerate}
We give a `no free lunch' theorem: that is we show that infinite horizon step-size sequence for non-smooth strongly convex optimization is either `bad in expectation' or `bad almost surely'. More precisely, we will show that if any infinite horizon SGD is good in 	`expectation' for every $t$ for every strongly convex function, then it is `bad almost surely' for some function $F$. 

\begin{theorem}\label{thm:lower_bounds}
Consider infinite horizon SGD with step size $\gamma_t$ such that assumptions~\ref{as:lc}~\ref{as:dia} and ~\ref{as:sc} hold for the objective function. Then, for any choice of $\gamma_t >0$, the algorithm is either bad in expectation or bad almost surely. 
\end{theorem}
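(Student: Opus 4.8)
The plan is to argue by contraposition: assume $\gamma_t$ is \emph{not} bad in expectation, i.e.\ that for \emph{every} objective $F$ satisfying Assumptions~\ref{as:lc}, \ref{as:dia}, \ref{as:sc} and every admissible (possibly time‑varying, randomized) subgradient oracle one has $\sup_{t} t\,\mathbb{E}[F(x_t)-F(x^*)]<\infty$; from this I will exhibit a single instance on which $\gamma_t$ is bad almost surely. The whole construction is one‑dimensional with $\mathcal{W}=[-D/2,D/2]$ and $x^*=0$, and the objectives have the form $F(x)=\tfrac{\lambda}{2}x^{2}+h(x)$ with $h$ convex, $G_0$‑Lipschitz and $h\ge h(0)=0$, so that $F(x)-F(x^*)\ge G_0|x|$ near the optimum; the degrees of freedom are the choice of $h$ and of the oracle noise.

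\emph{(Step 1: structural constraints on $\gamma_t$.)} Apply the hypothesis first to the smooth quadratic $F(x)=\tfrac{\lambda}{2}x^{2}$ with i.i.d.\ $\pm\sigma$ gradient noise ($\sigma$ a fixed fraction of $G$). Unrolling the recursion gives $x_{t+1}=\big(\prod_{s\le t}(1-\gamma_s\lambda)\big)x_1-\sum_{s\le t}\gamma_s\xi_s\prod_{r=s+1}^{t}(1-\gamma_r\lambda)$, and $\mathbb{E}[F(x_{t+1})]$ equals $\tfrac{\lambda}{2}$ times the squared ``bias'' term plus the martingale variance. Requiring this to be $O(1/t)$ for all admissible $x_1$ and $\lambda$ forces: $\gamma_t\lambda>2$ occurs only finitely often (a single such step creates an $\Omega(D)$ displacement); $\sum_{s\le t}\gamma_s\ge \tfrac{1}{3\lambda}\log t$ for all large $t$ (otherwise the bias term keeps $t\,\mathbb{E}[F(x_t)]$ from being bounded for adversarial $x_1$); and $\gamma_t=O(1/\sqrt t)$ (from the variance lower bound $\gtrsim\gamma_{t-1}^{2}\sigma^{2}$). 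In words, a good‑in‑expectation $\gamma_t$ is ``$1/t$‑like on average'' but, being infinite‑horizon, it can never perform a final step‑size collapse and keep it.

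\emph{(Step 2: the non‑smooth instance and the a.s.\ blow‑up.)} Now take $F=\tfrac{\lambda}{2}x^{2}+h$ with $h$ piecewise linear and a time‑varying oracle designed so that, once $x_t$ enters an $O(\gamma_tG_0)$‑neighbourhood of $0$, the iterate ``rings'': the relevant subgradient keeps magnitude $\Theta(G_0)$ no matter how close $x_t$ is to $0$, and (writing out the near‑optimum recursion) $\mathbb{E}[F(x_t)-F(x^*)]$ is bounded below by an accumulation of the form $c\,G_0^{2}\gamma_t\sum_{s}\gamma_s$ over a window of ``active'' steps. By Step~1 this window makes $\sum_s\gamma_s$ grow, so either the lower bound is already $\omega(1/t)$ in expectation along a deterministic subsequence --- contradicting the standing assumption, so this case does not arise --- or the mean stays $O(1/t)$ only because the iterate has a heavy upper fluctuation. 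In the remaining case, pick sparse trigger times $t_k\uparrow\infty$ and inject into $\hat g_{t_k}$ (only at those times) an independent mean‑zero perturbation of the largest magnitude the constraint $\|\hat g\|\le G$ allows. A Paley--Zygmund / Berry--Esseen anti‑concentration estimate then shows that at each $t_k$, with probability at least a non‑summable $p_k$, one has $F(x_{t_k+1})-F(x^*)\ge c\,G_0\gamma_{t_k}\cdot(\text{accumulation})=\omega(1/t_k)$; these events are independent over $k$ because the injected perturbations are, so by the second Borel--Cantelli lemma infinitely many occur almost surely, i.e.\ $t_k\,[F(x_{t_k})-F(x^*)]\to\infty$ along a random subsequence. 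Hence $\gamma_t$ is bad almost surely, completing the contrapositive.

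The main obstacle is Step~2: building, for a \emph{completely arbitrary} $\gamma_t$ (possibly non‑monotone and with erratic decay), a single non‑smooth strongly convex instance and a time‑varying oracle that (i) respect $\|\hat g_t\|\le G$ a.s.\ and the projection onto $\mathcal{W}$, (ii) guarantee the ``ringing'' lower bound on $F(x_t)-F(x^*)$ from some \emph{deterministic} time onward, and (iii) upgrade the mean lower bound to an anti‑concentrated one whose failure probabilities are simultaneously small enough to be consistent with ``good in expectation'' and large enough to be non‑summable. Balancing that upper bound on the mean against the anti‑concentration lower bound --- extracting almost‑sure fluctuations precisely when the expectation is controlled --- is the delicate point; the rest is routine estimation of the geometric‑ and harmonic‑type sums $\sum_s\gamma_s$ and $\prod_s(1-\gamma_s\lambda)$ in the recursion.
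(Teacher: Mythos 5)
Your overall strategy — argue by contraposition, using the smooth quadratic to extract structural constraints on $\gamma_t$, then use a non-smooth strongly convex instance with sign noise and Borel--Cantelli to get almost-sure blow-up — is the same as the paper's (the paper uses $F(x)=\tfrac12 x^2$ in Lemma~\ref{lem:square_example} and $F(x)=|x|+\tfrac12 x^2$ with Rademacher noise in Lemmas~\ref{lem:sub_harmonic_steps} and~\ref{lem:bad_almost_surely}). But there are two concrete gaps.

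First, Step~1 is too weak. From the quadratic's variance you only get $\gamma_t=O(1/\sqrt t)$, which is what you claim. The paper instead runs the non-smooth instance in Lemma~\ref{lem:sub_harmonic_steps} to get the much sharper $\gamma_t=O(1/t)$: with a subgradient of magnitude $\Theta(1)$ near the optimum and Rademacher noise, one step already forces $\mathbb{E}|x_{t+1}|\gtrsim\gamma_t$, hence $t\gamma_t\to\infty$ along any subsequence forces bad-in-expectation. This $O(1/t)$ bound is actually used downstream (e.g.\ to argue $\tfrac12\sum_{J_k}\gamma_t\le 1$ in the drift estimate and to control $S_k$ in the variance recursion); $O(1/\sqrt t)$ would not suffice. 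More importantly, Step~1 does not extract the precise structural dichotomy the construction pivots on. The paper's Lemma~\ref{lem:high_var_bad_expectation} shows (still via the quadratic, but via a careful dyadic unrolling of the exact recursion $\mathbb{E}z_{t+1}^2=(1-\gamma_t)^2\mathbb{E}z_t^2+\gamma_t^2$) that if on every dyadic block $I_k$ either the mass $\sum_{I_k}\gamma_t\to 0$ or the steps are concentrated ($\sum_{I_k}\gamma_t^2\gg 2^{-k}(\sum_{I_k}\gamma_t)^2$), then the quadratic instance is already bad in expectation. The negation — infinitely many blocks with mass bounded below \emph{and} steps well-spread — is exactly the hypothesis Lemma~\ref{lem:bad_almost_surely} needs. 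Your ``log lower bound on $\sum\gamma_s$'' captures something weaker and doesn't hand Step~2 the right objects.

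Second, Step~2, which you acknowledge as the ``main obstacle,'' is essentially a promissory note and I think the plan as sketched would not go through. You propose injecting independent perturbations at sparse trigger times $t_k$ and running anti-concentration; but the event at time $t_k$ depends on the path up to $t_k$, so these events are \emph{not} independent across $k$, and Borel--Cantelli II does not apply. The paper's construction sidesteps this: it considers the event $A_k$ that some length-$\tau_k\asymp k$ sub-block of $I_k$ has all Rademacher signs $+1$ (which happens except with probability $O(k2^{-k/2})$), and shows by a drift argument that \emph{whenever} $A_k$ holds, either $y$ hits the boundary $-1$ or it drops by at least $\sum_{J_k(i_0)}\gamma_t$ during that sub-block — a deterministic statement that holds regardless of the (random) state $y$ at the start of the block. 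The randomly chosen sub-block $J_k(i_0)$ carries mass $\gtrsim(\tau_k/|I_k|)\sum_{I_k}\gamma_t$ with probability bounded below by Paley--Zygmund (here you need the ``well-spread'' condition from the dichotomy above). The resulting events $B_{k_r}=A_{k_r}\cap E_{k_r}$ sit over disjoint blocks of Rademachers and auxiliary uniforms, so they \emph{are} independent, and each gives $\tfrac{t}{\log t}[F(y_t)-F(0)]\ge\delta_0$ for some $t\in I_{k_r}$. Borel--Cantelli II then closes the argument. The crucial devices you are missing are (i) the dyadic-block all-ones run, (ii) the path-independent drift lower bound, and (iii) the Paley--Zygmund step on the random sub-block mass — not Berry--Esseen anti-concentration of the iterate itself.
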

We give the proof in Section~\ref{sec:lower_bounds}.

\section{Technical Ideas and Proofs}
\label{sec:proofs}
 Recall the definition of $T_i$ from Section~\ref{sec:main_results}. The rough idea behind the proof is as follows: we will find a `good point' in the range $\left[\lceil \tfrac{T}{4}\rceil, T_1\right]$ and then show that this implies that there is a `good point' between $T_1=T/2$ and $T_2\approx 3T/4$ and so on, until we conclude that $x_T$ is a good point. 

To this end, we first provide a key lemma that bounds the total weighted deviation  of SGD iterates from a given iterate $x_{t_0}$ (in terms of function value), i.e., it intuitively shows that once we find an iterate with small function value, the remaining iterates cannot deviate from it significantly. The lemma uses a trick that was first used in~\cite{zhang2004solving} and then also in~\cite{shamir2013stochastic}. 
\begin{lemma}
	Let $x_1,\dots, x_T$ be the output of SGD algorithm (Algorithm~\ref{alg:net}) with step size sequence $\alpha_t$ defined by \eqref{eq:weak_step_size}. Then, given any $ 1<t_0 < t_1 \leq T$, $$\sum_{t=t_0}^{t_1}2\alpha_t\mathbb{E}\left[F(x_t)-F(x_{t_0})\right] \leq \sum_{t=t_0}^{t_1}G^2\alpha_t^2. $$
\label{lem:look_ahead}
\end{lemma}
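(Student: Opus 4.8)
The plan is to expand the squared distance from a *fixed* iterate $x_{t_0}$, rather than from the optimum $x^*$, which is the trick attributed to \cite{zhang2004solving}. Concretely, for any $t_0 \le t \le t_1$, write $\|x_{t+1}-x_{t_0}\|^2 = \|\Pi_{\mathcal W}(x_t - \alpha_t \hat g_t(x_t)) - x_{t_0}\|^2$ and use non-expansiveness of the projection (together with $x_{t_0}\in\mathcal W$) to drop the projection:
$$\|x_{t+1}-x_{t_0}\|^2 \le \|x_t - x_{t_0}\|^2 - 2\alpha_t\langle \hat g_t(x_t), x_t - x_{t_0}\rangle + \alpha_t^2 \|\hat g_t(x_t)\|^2.$$
Using Assumption~\ref{as:lc} we bound $\|\hat g_t(x_t)\|^2 \le G^2$. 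Taking conditional expectation given $x_t$, the cross term becomes $-2\alpha_t\langle g_t(x_t), x_t - x_{t_0}\rangle$ with $g_t(x_t)\in\partial F(x_t)$, and convexity gives $\langle g_t(x_t), x_t - x_{t_0}\rangle \ge F(x_t) - F(x_{t_0})$. Rearranging and taking full expectations yields the one-step inequality
$$2\alpha_t\, \mathbb{E}[F(x_t) - F(x_{t_0})] \le \mathbb{E}\|x_t - x_{t_0}\|^2 - \mathbb{E}\|x_{t+1} - x_{t_0}\|^2 + G^2\alpha_t^2.$$

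Next I would sum this inequality over $t = t_0, t_0+1, \dots, t_1$. The $\mathbb{E}\|x_t - x_{t_0}\|^2$ terms telescope, leaving $\mathbb{E}\|x_{t_0} - x_{t_0}\|^2 - \mathbb{E}\|x_{t_1+1} - x_{t_0}\|^2 = -\mathbb{E}\|x_{t_1+1}-x_{t_0}\|^2 \le 0$ (the first term vanishes since $x_{t_0}$ is subtracted from itself). Hence
$$\sum_{t=t_0}^{t_1} 2\alpha_t\, \mathbb{E}[F(x_t) - F(x_{t_0})] \le \sum_{t=t_0}^{t_1} G^2\alpha_t^2,$$
which is exactly the claimed bound. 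Note that the specific form of the step-size sequence \eqref{eq:weak_step_size} is never used in the argument — only that the $\alpha_t$ are nonnegative — so the same proof applies verbatim to any nonnegative step sizes, including the strongly-convex sequence \eqref{eq:strong_step_size}; this generality is presumably why the lemma will be reused later.

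I do not anticipate a serious obstacle here: the proof is a standard SGD descent-lemma computation, with the only twist being that the "comparator" is an algorithm iterate $x_{t_0}$ instead of $x^*$. The one point that needs a word of care is that $x_{t_0}\in\mathcal W$ (so that the projection step is non-expansive toward it) — this holds because every SGD iterate lies in $\mathcal W$ by construction of Algorithm~\ref{alg:net}. A second minor subtlety is that $g_t(x_t)$ is a (measurable selection of a) subgradient at the random point $x_t$, so the conditional-expectation step uses the unbiasedness assumption $\mathbb{E}[\hat g_t(x) \mid x_t = x] = g_t(x)$ pointwise; since the noise $(\hat g_t - g_t)_{t}$ is independent across $t$ and $x_t$ is measurable with respect to the earlier noise, conditioning on $x_t$ is legitimate and the cross term reduces as claimed.
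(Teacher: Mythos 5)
Your proof matches the paper's argument essentially line for line: non-expansiveness of the projection to drop $\Pi_{\mathcal W}$, expansion of $\|x_{t+1}-x_{t_0}\|^2$, conditional expectation to reduce $\hat g_t$ to $g_t$, the convexity inequality $\langle g_t(x_t), x_t - x_{t_0}\rangle \ge F(x_t)-F(x_{t_0})$, and telescoping over $t=t_0,\dots,t_1$. One tiny refinement: since $x_{t_0}$ is itself random, it is cleaner (as the paper implicitly does) to condition on the $\sigma$-field generated by the noise up to time $t-1$, which makes both $x_t$ and $x_{t_0}$ measurable simultaneously for $t\ge t_0$; your closing remark gestures at exactly this, so the argument is sound.
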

\begin{proof}
By convexity of $\mathcal{W}$, we have:
\begin{align*}
\|x_{t+1} -x_{t_0}\| &= \|\Pi_{\mathcal{W}}\left(x_t  - \alpha_t \hat{g}_t(x_{t})\right) - x_{t_0}\| \leq \|x_t  - \alpha_t \hat{g}_t(x_{t}) - x_{t_0}\|
\end{align*}
Taking squares and expanding on both sides,
\begin{align*}
\|x_{t+1}-x_{t_0}\|^2 &\leq \|x_t - x_{t_0}\|^2 + \alpha_t^2\|\hat{g}_t(x_t)\|^2 - 2\alpha_t \langle \hat{g}_t(x_{t}),x_{t}-x_{t_0}\rangle 
\end{align*}
Taking expectation on both sides, and realizing that $\hat{g}_t$ is independent of $x_t$ and $x_{t_0}$, we conclude,
\begin{align*}
\mathbb{E}\left[\|x_{t+1}-x_{t_0}\|^2\right] &\leq \mathbb{E}\left[\|x_{t}-x_{t_0}\|^2\right] +\alpha_t^2 G^2 -2\alpha_t \mathbb{E}\langle g_t,x_t-x_{t_0}\rangle  
\end{align*}

Here we have used the fact that $\mathbb{E}\left[\hat{g}_t(x_t)|x_t,x_{t_0}\right] = g_t(x_t)$. Using convexity, $\langle g_t,x_t-x_{t_0}\rangle$ is lower bounded by $F(x_t) - F(x_{t_0})$. We conclude that:
\begin{align*}
\mathbb{E}\left[\|x_{t+1}-x_{t_0}\|^2\right] &\leq \mathbb{E}\left[\|x_{t}-x_{t_0}\|^2\right]+\alpha_t^2 G^2-2\alpha_t \mathbb{E}\left[ F(x_t)-F(x_{t_0})\right]  
\end{align*}
The result now follows by summing the above term from $t = t_0$ to $t = t_1$.
\end{proof}

We now provide a high probability version of Lemma~\ref{lem:look_ahead}. To this end, we construct an exponential super-martingale that when combined with a  Chernoff bound leads to exponential concentration bound. The method used is somewhat similar to the one used in \cite{harvey2018tight}, but our technique is specifically for Lemma~\ref{lem:look_ahead} and is more concise. 

For simplicity of exposition, we first define a few key quantities. Let $1 < t_0 < t_{1} \leq T$ and $r = t_1 - t_0 + 1$. We define the sequence $L_t$ as follows: 
 for $ t_0 \leq t \leq t_{1}$ as follows: \begin{equation}\label{eq:def_L}
L_{t_1} = \frac{1}{e\cdot r},\ t\in [t_0,t_1],\ \ \ L_{t-1} = L_t + L_t^2,\  t_0 \leq t -1 < t_{1}.
\end{equation}
Using Lemma ~\ref{lem:lambda_growth_bound}, $\frac{1}{e\cdot r} \leq L_{t} \leq \frac{1}{r}$. Now, for any $l$ such that $t_0 \leq l \leq t_1$, we define the following random variables :{\footnotesize 
\begin{equation}\label{eq:arv}A(l,t_1) := \sum_{t=l}^{t_1} L_t\left[2\alpha_t (F(x_{t}) - F(x_{l})) - \alpha_t^2G^2 \right],\ A^{*}(t_0,t_1) := \sum_{t=t_0}^{t_1} L_t\left[2\alpha_t (F(x_{t}) - F(x^*)) - \alpha_t^2G^2 \right].\end{equation}}
We note the difference between $A^*(t_0,t_1)$ and $A(l,t_1)$: $A(l,t_1)$ considers suboptimality with respect to $x_l$ whereas $A^{*}(t_0,t_1)$ considers the suboptimality with respect to the optimizer $x^*$.

\begin{lemma}
\label{lem:look_ahead_high_probability}
Let $A$ and $A^*$ be as defined by \eqref{eq:arv}. 
Let $p(t_0),\dots,p(t_1)$ be any probability distribution over $\{t_0,\dots,t_1\}$. We let $(p.A)(t_0,t_1) := \sum_{l = t_0}^{t_1} p(l) A(l,t_1)$. Also, let $\alpha_t$ be a decreasing step size sequence. Then,
$$\mathbb{P}\left[(p.A)(t_0,t_1) > \eta\right] \leq \exp\left(-\tfrac{\eta}{8\alpha^2_{t_0}G^2}\right)\,.$$
Additionally, if $diam(\mathcal{W}) \leq D$ almost surely, we have:

$$\mathbb{P}\left[A^{*}(t_0,t_1) > \eta\right] \leq \exp\left(\tfrac{2D^2L_{t_0}}{8\alpha_{t_0}^2G^2}\right)\exp\left(\tfrac{- \eta}{8\alpha^2_{t_0}G^2}\right).$$
\end{lemma}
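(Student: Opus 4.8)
The plan is to turn Lemma~\ref{lem:look_ahead} into a tail bound by the standard Chernoff / exponential-supermartingale route, using the recursion $L_{t-1}=L_t+L_t^2$ to manufacture exactly the negative quadratic term that absorbs the gradient noise. The starting point is the \emph{pathwise} one-step inequality behind Lemma~\ref{lem:look_ahead}: for any $z\in\mathcal W$ that is $\mathcal F_t$-measurable (in particular $z=x_l$ with $l\le t$, or $z=x^*$), nonexpansiveness of $\Pi_{\mathcal W}$ and convexity of $F$ give $2\alpha_t\bigl(F(x_t)-F(z)\bigr)-\alpha_t^2G^2\ \le\ \|x_t-z\|^2-\|x_{t+1}-z\|^2-\xi_t(z)$, where $\xi_t(z):=2\alpha_t\langle\hat g_t(x_t)-g_t(x_t),x_t-z\rangle$ satisfies $\mathbb E[\xi_t(z)\mid\mathcal F_t]=0$ and $|\xi_t(z)|\le 4G\alpha_t\|x_t-z\|$.

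For the first claim, fix $z=x_l$, multiply the $t$-th inequality by $L_t\ge 0$, sum over $t=l,\dots,t_1$, and apply Abel summation; since $L_{t}-L_{t-1}=-L_t^2$ and $\|x_l-x_l\|=0$ this yields $A(l,t_1)\le-\sum_{t=l+1}^{t_1}L_t^2\|x_t-x_l\|^2-\sum_{t=l}^{t_1}L_t\xi_t(x_l)$. Averaging against $p$ and exchanging the order of summation, the noise terms collapse into a single inner product $\langle\hat g_t-g_t,w_t\rangle$ with $w_t:=\sum_{l=t_0}^{t}p(l)(x_t-x_l)$ (note $w_{t_0}=0$, $w_t$ is $\mathcal F_t$-measurable, and the $l=t$ summand vanishes), giving $(p.A)(t_0,t_1)\le-\sum_{t=t_0+1}^{t_1}L_t^2\sum_{l<t}p(l)\|x_t-x_l\|^2-\sum_{t=t_0+1}^{t_1}2\alpha_tL_t\langle\hat g_t-g_t,w_t\rangle=:S_{t_1}$. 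The step on which everything hinges is the Cauchy--Schwarz inequality $\|w_t\|^2\le\sum_{l<t}p(l)\|x_t-x_l\|^2$ (using $\sum_{l<t}p(l)\le1$): it makes the single negative quadratic term at least $L_t^2\|w_t\|^2$, precisely the conditional-variance scale of the aggregated noise term. Applying Hoeffding's lemma conditionally on $\mathcal F_t$ to the increment $U_t=-L_t^2\|w_t\|^2-2\alpha_tL_t\langle\hat g_t-g_t,w_t\rangle$ gives $\mathbb E[e^{\theta U_t}\mid\mathcal F_t]\le\exp\!\bigl(\theta L_t^2\|w_t\|^2(8\theta\alpha_t^2G^2-1)\bigr)\le1$ whenever $8\theta\alpha_t^2G^2\le1$; since $(\alpha_t)$ is decreasing this holds for all $t\ge t_0$ with $\theta=\tfrac1{8\alpha_{t_0}^2G^2}$, so $e^{\theta S_t}$ is a supermartingale, $\mathbb E[e^{\theta S_{t_1}}]\le1$, and Markov gives $\mathbb P[(p.A)(t_0,t_1)>\eta]\le\mathbb P[S_{t_1}>\eta]\le e^{-\theta\eta}$.

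For the second claim, redo the computation with $z=x^*$. Abel summation now leaves the boundary term $L_{t_0}\|x_{t_0}-x^*\|^2\le L_{t_0}D^2$, and the martingale part retains a genuine increment $-L_{t_0}\xi_{t_0}(x^*)$ at $t=t_0$ with no negative quadratic companion. Writing $\tilde S_{t_1}$ for the sum of all these martingale increments (over $t=t_0,\dots,t_1$), so $A^*(t_0,t_1)\le L_{t_0}D^2+\tilde S_{t_1}$: for $t\ge t_0+1$ the increments $-L_t^2\|x_t-x^*\|^2-L_t\xi_t(x^*)$ are handled exactly as above (variance $\propto\|x_t-x^*\|^2$ absorbed by $-L_t^2\|x_t-x^*\|^2$ once $8\theta\alpha_t^2G^2\le1$), while for the lone $t_0$-increment Hoeffding together with $|\xi_{t_0}(x^*)|\le 4\alpha_{t_0}GD$ and $L_{t_0}^2\le L_{t_0}\le1$ contributes a multiplicative factor $e^{\theta L_{t_0}D^2}$ to $\mathbb E[e^{\theta\tilde S_{t_1}}]$. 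A Chernoff bound with the same $\theta=\tfrac1{8\alpha_{t_0}^2G^2}$ then gives $\mathbb P[A^*(t_0,t_1)>\eta]\le e^{-\theta(\eta-L_{t_0}D^2)}\,\mathbb E[e^{\theta\tilde S_{t_1}}]\le e^{-\theta\eta}e^{2\theta L_{t_0}D^2}$, which is the stated bound.

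The main obstacle I anticipate is the bookkeeping in the second paragraph: correctly reordering $\sum_l p(l)\sum_{t\ge l}(\cdot)$, checking that the $l=t$ contributions drop out so that $w_t$ stays $\mathcal F_t$-measurable with no off-by-one mismatch between the quadratic-variation index and the noise index, and verifying that the Cauchy--Schwarz step really does make the single negative term large enough that the step-size restriction is only $\theta\le\tfrac1{8\alpha_t^2G^2}$. Everything downstream is routine Hoeffding/supermartingale manipulation; the bound $\tfrac1{er}\le L_t\le\tfrac1r$ from Lemma~\ref{lem:lambda_growth_bound} is needed only to guarantee $L_{t_0}\le1$.
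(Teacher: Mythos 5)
Your proof is correct, but for the first inequality it takes a genuinely different route from the paper. You aggregate the $T$ pathwise inequalities over $l$ \emph{before} building any supermartingale: you collect the noise into a single martingale increment $-2\alpha_t L_t\langle\hat g_t-g_t,w_t\rangle$ with $w_t=\sum_{l\le t}p(l)(x_t-x_l)$, and you need the Jensen/Cauchy--Schwarz observation $\|w_t\|^2\le\sum_{l\le t}p(l)\|x_t-x_l\|^2$ to ensure that the aggregated negative term $-L_t^2\sum_l p(l)\|x_t-x_l\|^2$ still dominates the conditional variance of the aggregated noise. This works, but it is the step you yourself flag as the ``obstacle,'' and it is exactly the bookkeeping the paper's argument sidesteps. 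The paper instead fixes $l$, builds one supermartingale $M_t$ per $l$ (with increment $-2\lambda L_t\alpha_t\Delta_t - \lambda L_t^2 D_t$, where $\Delta_t=\langle\hat g_t-g_t,x_t-x_l\rangle$ and $D_t=\|x_t-x_l\|^2$), obtains $\mathbb{E}[\exp(\lambda A(l,t_1))]\le\mathbb{E}[M_{t_1}]\le 1$ for each $l$ separately, and \emph{then} combines over $l$ by a single application of Jensen's inequality to the convex function $\exp$: $\exp\bigl(\lambda\sum_l p(l)A(l,t_1)\bigr)\le\sum_l p(l)\exp\bigl(\lambda A(l,t_1)\bigr)$, so $\mathbb{E}[\exp(\lambda(p.A))]\le 1$. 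That ``combine at the very end via $\exp$-convexity'' trick removes any need to reason about the aggregated direction $w_t$ or its variance; your direct-aggregation route proves the same MGF bound but with more moving parts. Your treatment of the second claim ($A^*$) — track the $L_{t_0}D_{t_0}^*$ boundary term from Abel summation, pay a Hoeffding price for the lone $t_0$ increment, bound both by $L_{t_0}D^2$ — is essentially the paper's argument and is correct. One small indexing slip: your $w_t$ and the conditional MGF should be taken given $\mathcal F_{t-1}$ (the $\sigma$-field making $x_t$, $w_t$ measurable and $\hat g_t$ still random), not $\mathcal F_t$.
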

\begin{lemma}
Let $\Gamma >0$ be fixed. Let $\lambda_0 = \frac{1}{re\Gamma}$, $\lambda_1 = \lambda_0 + \Gamma\lambda_0^2$, $\dots$, $\lambda_{i+1} = \lambda_{i} + \Gamma\lambda^2_{i}$.  Then, for every $i \leq r$, $\lambda_i \leq (1+\tfrac{1}{r})^i\lambda_0$
\label{lem:lambda_growth_bound}
\end{lemma}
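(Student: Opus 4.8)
The plan is to prove the bound by induction on $i$, taking the inequality $\lambda_i \le (1+\tfrac{1}{r})^i\lambda_0$ itself as the induction hypothesis. The base case $i=0$ holds with equality. For the inductive step, I would first rewrite the recursion in multiplicative form as $\lambda_{i+1} = \lambda_i(1+\Gamma\lambda_i)$, so that it suffices to establish $\Gamma\lambda_i \le \tfrac{1}{r}$ whenever $0 \le i < r$; granting this, $\lambda_{i+1} \le \lambda_i(1+\tfrac{1}{r}) \le (1+\tfrac{1}{r})^{i+1}\lambda_0$, where the last step invokes the induction hypothesis for $i$.

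To obtain $\Gamma\lambda_i \le \tfrac{1}{r}$, I would combine the induction hypothesis with the elementary inequality $(1+\tfrac{1}{r})^i \le (1+\tfrac{1}{r})^r \le e$, valid for $0 \le i \le r$. This yields $\lambda_i \le e\lambda_0 = e\cdot\tfrac{1}{re\Gamma} = \tfrac{1}{r\Gamma}$, hence $\Gamma\lambda_i \le \tfrac{1}{r}$, which closes the induction. The argument is short and elementary, so there is no genuine obstacle; the only point that requires attention is that it is valid only in the stated range $i \le r$ — for larger $i$ the factor $(1+\tfrac{1}{r})^i$ can exceed $e$, the key bound $\Gamma\lambda_i \le \tfrac{1}{r}$ may fail, and the geometric growth is no longer guaranteed to persist. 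This is precisely why the sequence $(L_t)$ in \eqref{eq:def_L} is defined only over an index window of length $r$.

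Finally, I would record how this lemma is used downstream: reading the recursion for $(L_t)$ in \eqref{eq:def_L} backwards from $L_{t_1} = \tfrac{1}{er}$, the values $L_{t_1}, L_{t_1-1}, \dots, L_{t_0}$ satisfy exactly the recursion of this lemma with $\Gamma = 1$ and $\lambda_0 = \tfrac{1}{re}$, and there are only $r-1 \le r$ such backward steps. Monotonicity of the recursion gives $L_t \ge \tfrac{1}{er}$, while the lemma gives $L_t \le (1+\tfrac{1}{r})^r\cdot\tfrac{1}{er} \le \tfrac{1}{r}$, which is exactly the two-sided bound $\tfrac{1}{er} \le L_t \le \tfrac{1}{r}$ asserted just after \eqref{eq:def_L} and used in Lemma~\ref{lem:look_ahead_high_probability}.
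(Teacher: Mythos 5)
Your proof is correct and follows essentially the same route as the paper's: both induct on $i$, rewrite the recursion as $\lambda_{i+1} = \lambda_i(1+\Gamma\lambda_i)$, and close the step by bounding $\Gamma\lambda_i \le (1+\tfrac{1}{r})^i/(re) \le e/(re) = \tfrac{1}{r}$ using $(1+\tfrac{1}{r})^i \le e$ for $i \le r$. Your version simply makes the key intermediate bound $\Gamma\lambda_i \le \tfrac{1}{r}$ explicit as a named step and adds accurate commentary on how the lemma feeds into the bounds on $(L_t)$.
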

See Section~\ref{sec:proofs_appendix} for proofs of the above given lemmata. We also require the following technical lemma: 
\begin{lemma}
	\label{lem:division_length_bound}
	Let $T_i$ be as defined in Section~\ref{sec:main_results}. Then, for  all $0\leq i \leq k-1$: 
	$$4\left(T_{i+2} - T_{i+1}\right) \geq T_{i+1} - T_i.$$
\end{lemma}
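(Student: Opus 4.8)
The plan is to prove Lemma~\ref{lem:division_length_bound} by a direct computation on the quantities $T_i = T - \lceil T 2^{-i}\rceil$. First I would expand both sides: $T_{i+2} - T_{i+1} = \lceil T 2^{-(i+1)}\rceil - \lceil T 2^{-(i+2)}\rceil$ and $T_{i+1} - T_i = \lceil T 2^{-i}\rceil - \lceil T 2^{-(i+1)}\rceil$. So the claim $4(T_{i+2}-T_{i+1}) \geq T_{i+1}-T_i$ becomes an inequality purely among ceilings of the dyadic fractions $a_j := \lceil T 2^{-j}\rceil$, namely $4(a_{i+1} - a_{i+2}) \geq a_i - a_{i+1}$.

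The key step is to control the rounding error. Writing $T 2^{-j} = a_j - \theta_j$ with $\theta_j \in [0,1)$, we have $a_i - a_{i+1} = T2^{-i} - T2^{-(i+1)} + \theta_i - \theta_{i+1} = T 2^{-(i+1)} + (\theta_i - \theta_{i+1})$, and similarly $a_{i+1} - a_{i+2} = T 2^{-(i+2)} + (\theta_{i+1} - \theta_{i+2})$. Hence the desired inequality is equivalent to
\[
4T 2^{-(i+2)} + 4(\theta_{i+1} - \theta_{i+2}) \geq T 2^{-(i+1)} + (\theta_i - \theta_{i+1}),
\]
i.e. $T 2^{-(i+1)} + 4(\theta_{i+1}-\theta_{i+2}) \geq T 2^{-(i+1)} + (\theta_i - \theta_{i+1})$, which reduces to $4\theta_{i+1} - 4\theta_{i+2} \geq \theta_i - \theta_{i+1}$, i.e. $5\theta_{i+1} \geq \theta_i + 4\theta_{i+2}$. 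This is the crux, and it is \emph{not} true for arbitrary reals in $[0,1)$, so I must use that the $\theta_j$ arise from a single $T$: since $T$ is a positive integer and $i+1 \leq k-1 < k$, we have $T 2^{-(i+1)} \geq 2$, so $a_{i+1} \geq 2$; more importantly, I would use $2 \cdot T2^{-(i+1)} = T 2^{-i}$, which forces a relation between $\theta_i$ and $\theta_{i+1}$ (doubling $a_{i+1} - \theta_{i+1}$ gives $a_i - \theta_i$, so $\theta_i = 2\theta_{i+1}$ when $T2^{-(i+1)}$ is such that no extra carry occurs, and in general $\theta_i \in \{2\theta_{i+1}, 2\theta_{i+1} - 1\}$ — actually $\theta_i \equiv 2\theta_{i+1} \pmod 1$). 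The cleanest route is: use $a_i \leq 2 a_{i+1}$ (since $\lceil 2x \rceil \leq 2\lceil x\rceil$) to bound $a_i - a_{i+1} \leq a_{i+1}$, and use $a_{i+1} \geq 2 a_{i+2} - 1$ (since $\lceil x \rceil \geq \lceil 2x\rceil/2 \geq (\lceil 2x\rceil - 1)/2$ applied appropriately) to bound $a_{i+1} - a_{i+2} \geq a_{i+2} - 1 \geq (a_{i+1}-1)/2$... and then combine, being careful that for $i+1 \leq k-1$ we have $a_{i+2} = \lceil T 2^{-(i+2)}\rceil \geq 2$, i.e. $a_{i+1} \geq 3$, so that $4(a_{i+1}-a_{i+2}) \geq 4 \cdot \frac{a_{i+1}-1}{2} = 2(a_{i+1}-1) = 2a_{i+1} - 2 \geq a_{i+1} \geq a_i - a_{i+1}$, where the last step $2a_{i+1} - 2 \geq a_{i+1}$ needs $a_{i+1} \geq 2$.

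I expect the main obstacle to be pinning down exactly which range of $i$ makes the off-by-one rounding terms harmless; the inequality is genuinely tight-ish near $i = k-1$, so I would handle the edge case $i = k-1$ (where $T_{k+1} = T$ by the special definition, and $T_k = T-1$, so $T_{i+2} - T_{i+1} = T_{k+1} - T_k = 1$) possibly separately, verifying directly that $4 \cdot 1 = 4 \geq T_k - T_{k-1} = \lceil T 2^{-(k-1)}\rceil - 1$, which holds because $T 2^{-(k-1)} > 1$ forces... wait, we need $\lceil T 2^{-(k-1)}\rceil \leq 5$; since $T 2^{-k} \leq 1$ we get $T 2^{-(k-1)} \leq 2$, hence $\lceil T2^{-(k-1)}\rceil \leq 2 \leq 5$, so that case is fine. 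For the generic range $0 \leq i \leq k-2$ I would run the argument above with the bounds $a_j \leq 2a_{j+1}$ and $a_j \geq 2 a_{j+1} - 1$ together with $a_{i+2} \geq 2$ (valid since $i+2 \leq k$ means $T 2^{-(i+2)} > 1$). Once the integer-ceiling inequalities are set up correctly the rest is a one-line chain of inequalities, so the writeup should be short.
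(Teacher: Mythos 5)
Your approach matches the paper's, whose entire proof is the one-line remark that the lemma follows from $2\lceil a\rceil - 1 \leq \lceil 2a\rceil \leq 2\lceil a\rceil$; you unwind exactly that, writing $a_j := \lceil T2^{-j}\rceil$, reducing the claim to $4(a_{i+1}-a_{i+2}) \geq a_i - a_{i+1}$, and applying the two ceiling bounds. However, as written your chain contains a false intermediate inequality. You assert $a_{i+1} - a_{i+2} \geq a_{i+2} - 1 \geq (a_{i+1}-1)/2$, and the second step requires $2a_{i+2} - 2 \geq a_{i+1} - 1$, i.e. $a_{i+1} \leq 2a_{i+2} - 1$; but you only have $a_{i+1} \leq 2a_{i+2}$. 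Concretely, with $T = 16$ and $i = 0$ you get $a_1 = 8$, $a_2 = 4$, so $a_{i+2} - 1 = 3 < 3.5 = (a_{i+1}-1)/2$. The fix is to skip the detour through $a_{i+2}-1$: from $a_{i+1} \geq 2a_{i+2} - 1$ one gets \emph{directly} $a_{i+1} - a_{i+2} \geq (a_{i+1}-1)/2$, after which your chain $4 \cdot \tfrac{a_{i+1}-1}{2} = 2a_{i+1} - 2 \geq a_{i+1} \geq a_i - a_{i+1}$ goes through whenever $a_{i+1} \geq 2$.

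A second, related slip: you justify the generic range $i \leq k-2$ by claiming ``$i+2 \leq k$ means $T2^{-(i+2)} > 1$'' so that $a_{i+2} \geq 2$. That is off by one at the boundary: by definition of $k$, $T2^{-k} \leq 1$, so at $i = k-2$ you have $a_{i+2} = a_k = 1$. Fortunately, once you drop the false intermediate term you no longer need $a_{i+2} \geq 2$ at all; what you actually need is $a_{i+1} \geq 2$, and this \emph{does} hold for all $i \leq k-2$ (since then $i+1 \leq k-1 < k$ gives $T2^{-(i+1)} > 1$). With that correction, together with your separate direct verification of $i = k-1$ (which is indeed needed since $T_{k+1}$ is defined specially as $T$ rather than $T - a_{k+1}$), the proof is complete and agrees with the paper's.
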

\begin{proof}
	Lemma follows from the fact that $2\lceil a \rceil -1 \leq \lceil 2a \rceil \leq 2\lceil a \rceil$. 
\end{proof}

\subsection{Step Size Modification}
\label{subsec:step_size_modification_properties}
Henceforth, we will assume that $\gamma_t$ is a decreasing step size sequence with at most polynomial decay (decay constant being $\beta$). We let $\alpha_t$ be the modification of $\gamma_t$ as defined in Equation~\ref{eq:modified_step_size}. Let, \begin{equation}\label{eq:def_tau}\tau_i := \arg\inf_{ T_i <t \leq T_{i+1}} \mathbb{E}[F(x_t)],\ i\in [k+1],\ \  \mbox{ and }\tau_0 := \arg\inf_{\lceil \frac{T}{4}\rceil \leq t\leq T_1 }\mathbb{E}[F(x_t)].\end{equation} 
Note that $\tau_{k+1} = T$. We note that $\tau_i$ are completely deterministic and only used as part of the proof. The ability to compute $\tau_i$ is not necessary.
\begin{lemma}
\label{lem:good_point_transfer}
Let $x_t$'s be iterates of SGD (Algorithm~\ref{alg:net}) with modified step size sequence  $\alpha_t$ of $\gamma_t$ defined in \eqref{eq:modified_step_size}; $\gamma_t$ sequence satisfies Assumption~\ref{as:decay}. Let $T_i,\ k$ be as defined by \eqref{eq:def_Ti}, and $\tau_i$, $0\leq i\leq k+1$ be as defined in \eqref{eq:def_tau}. Also, let $T\geq 4$. Then, the following holds  for all $i\in [k]$: 
$$\mathbb{E}[F(x_{\tau_{i+1}}) - F(x_{\tau_{i}})] \leq \frac{5G^2\gamma_T}{\beta^2}2^{-i}, \ \ \mathbb{E}[F(x_{\tau_{1}}) - F(x_{\tau_{0}})] \leq \frac{5G^2\gamma_T}{\beta^4}.$$
\end{lemma}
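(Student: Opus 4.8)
\textbf{Proof strategy for Lemma~\ref{lem:good_point_transfer}.}
The plan is to apply Lemma~\ref{lem:look_ahead} on the block of iterations $(T_i, T_{i+1}]$ where $x_{\tau_{i+1}}$ lives, using $t_0 = \tau_i$ as the anchor point and $t_1 = \tau_{i+1}$ (or more carefully, running the sum over the whole block $[\tau_i, T_{i+1}]$ and then using the minimality of $\tau_{i+1}$). Since $\tau_i \in (T_{i-1}, T_i]$ and $\tau_{i+1} \in (T_i, T_{i+1}]$, the anchor sits at or before the start of the $(i+1)$-th block. Lemma~\ref{lem:look_ahead} gives
$$\sum_{t=\tau_i}^{T_{i+1}} 2\alpha_t\, \mathbb{E}[F(x_t) - F(x_{\tau_i})] \le \sum_{t=\tau_i}^{T_{i+1}} G^2 \alpha_t^2.$$
Restricting the left sum to $t$ in the block $(T_i, T_{i+1}]$ and dropping the (possibly negative, but in any case we only need a one-sided bound) earlier terms requires a little care; the cleaner route is to note $F(x_{\tau_{i+1}}) \le F(x_t)$ in expectation for all $t \in (T_i, T_{i+1}]$, so
$$\left(\sum_{t=T_i+1}^{T_{i+1}} 2\alpha_t\right)\mathbb{E}[F(x_{\tau_{i+1}}) - F(x_{\tau_i})] \le \sum_{t=\tau_i}^{T_{i+1}} 2\alpha_t\,\mathbb{E}[F(x_t) - F(x_{\tau_i})] \le \sum_{t=\tau_i}^{T_{i+1}} G^2\alpha_t^2,$$
provided the right-hand side of the first inequality is nonnegative — and if it is negative we must handle that separately, but then $\mathbb{E}[F(x_{\tau_{i+1}}) - F(x_{\tau_i})] < 0$ and the desired bound holds trivially. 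So it suffices to bound $\left(\sum_{t=\tau_i}^{T_{i+1}} G^2\alpha_t^2\right) \big/ \left(\sum_{t=T_i+1}^{T_{i+1}} 2\alpha_t\right)$.

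The main work is then estimating the ratio of these two sums using the structure of $\alpha_t = 2^{-j}\gamma_t$ on $(T_j, T_{j+1}]$ together with the polynomial-decay and monotonicity properties of $\gamma_t$. On the block $(T_i, T_{i+1}]$ we have $\alpha_t = 2^{-i}\gamma_t$, so the denominator is at least $2 \cdot 2^{-i}\gamma_{T_{i+1}} (T_{i+1} - T_i)$; since $\gamma$ is decreasing and $T_{i+1} \le T$, and using polynomial decay to relate $\gamma_{T_{i+1}}$ to $\gamma_T$ (both are within a constant — at most a bounded number of "doublings" apart because $T_{i+1} \ge T/2 \ge T/2$, so $\gamma_{T_{i+1}} \ge \beta \gamma_T$ actually $\gamma_{T_{i+1}} \ge \gamma_T$ since decreasing, wait we need a lower bound so monotonicity gives $\gamma_{T_{i+1}} \ge \gamma_T$ directly). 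For the numerator, the anchor $\tau_i$ may reach back into block $i-1$; splitting $\sum_{t=\tau_i}^{T_{i+1}} G^2\alpha_t^2$ into the part over $(T_{i-1}, T_i]$ (where $\alpha_t = 2^{-(i-1)}\gamma_t$) and the part over $(T_i, T_{i+1}]$ (where $\alpha_t = 2^{-i}\gamma_t$), each piece is bounded by $G^2 (2^{-(i-1)})^2 \gamma_{\tau_i}^2$ times the block length, and $\gamma_{\tau_i} \le \gamma_1$-type bounds combined with polynomial decay let us compare to $\gamma_T$. Crucially, Lemma~\ref{lem:division_length_bound} controls block lengths: $T_{i+1} - T_i \le 4(T_{i+2}-T_{i+1})$, and more usefully the reverse-type comparison $T_i - T_{i-1} \le 4(T_{i+1}-T_i)$ lets us bound the block-$(i-1)$ length by a constant times the block-$i$ length, so the numerator is at most $O(1) \cdot G^2 4^{-i}\gamma_T^2 \cdot (T_{i+1}-T_i)$ after using polynomial decay to replace $\gamma_{\tau_i}$ by $O(\gamma_T/\beta)$ or so. Dividing, the block-length factors cancel and we are left with $O(1)\cdot G^2 2^{-i}\gamma_T$, and tracking the constants carefully (the $\beta$ factors accumulate from each use of polynomial decay over up to two doublings) yields the claimed $\frac{5G^2\gamma_T}{\beta^2}2^{-i}$.

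For the base case $\mathbb{E}[F(x_{\tau_1}) - F(x_{\tau_0})] \le \frac{5G^2\gamma_T}{\beta^4}$, the argument is the same but the anchor $\tau_0 \in [\lceil T/4 \rceil, T_1]$ can sit as far back as $T/4$, so the numerator sum $\sum_{t=\tau_0}^{T_1}\alpha_t^2$ reaches over iterations where $\gamma_t$ could be as large as $\gamma_{T/4}$, which is within two doublings of $\gamma_T$ — hence $\gamma_{\tau_0} \le \gamma_T/\beta^2$ — and the interval $[T/4, T_1]$ has length comparable to $[T/2, T]$, giving the extra $\beta^{-2}$ (for a total $\beta^{-4}$ after the same $\beta^{-2}$ that appeared in the inductive step's denominator/numerator comparison). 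The main obstacle I anticipate is the bookkeeping: getting the exact numerical constant $5$ and the exact powers of $\beta$ right requires carefully tracking (i) how many doublings separate $\tau_i$ (or $\tau_0$) from $T$, (ii) the constant in Lemma~\ref{lem:division_length_bound}-type block-length comparisons, and (iii) the fact that $\alpha_t^2$ carries a $4^{-i}$ while $\alpha_t$ carries only $2^{-i}$, which is exactly what produces the geometric factor $2^{-i}$ in the final bound rather than something that fails to sum.
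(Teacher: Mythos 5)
Your overall strategy is exactly the paper's: anchor Lemma~\ref{lem:look_ahead} at $t_0 = \tau_i$, run the sum far enough to cover the block containing $\tau_{i+1}$, drop the nonnegative contribution from $\tau_i$'s own block (using $\tau_i$'s minimality), lower-bound the remaining sum via $\tau_{i+1}$'s minimality, and then bound the resulting ratio of $\sum\alpha_t^2$ to $\sum\alpha_t$ using monotonicity, polynomial decay, and Lemma~\ref{lem:division_length_bound} to control block-length ratios. The case split when $\mathbb{E}[F(x_{\tau_{i+1}})] \le \mathbb{E}[F(x_{\tau_i})]$ is also handled the same way (trivially done). So the architecture of the argument matches.

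However, there is a concrete indexing error that propagates through your sketch. You assert ``$\tau_i \in (T_{i-1}, T_i]$ and $\tau_{i+1} \in (T_i, T_{i+1}]$,'' but Equation~\eqref{eq:def_tau} defines $\tau_i := \arg\inf_{T_i < t \le T_{i+1}}\mathbb{E}[F(x_t)]$, so $\tau_i \in (T_i, T_{i+1}]$ and $\tau_{i+1} \in (T_{i+1}, T_{i+2}]$. As a consequence, your application of Lemma~\ref{lem:look_ahead} with $t_1 = T_{i+1}$ does not reach the block where $x_{\tau_{i+1}}$ actually lives; the paper correctly takes $t_1 = T_{i+2}$. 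Downstream, your split of the sum at $T_i$, your denominator $\sum_{t=T_i+1}^{T_{i+1}}2\alpha_t$, and the block-length comparison should all be shifted forward by one (split at $T_{i+1}$, denominator over $(T_{i+1},T_{i+2}]$, use $\tfrac{T_{i+2}-T_{i+1}}{T_{i+2}-T_i} \ge \tfrac{1}{5}$ via Lemma~\ref{lem:division_length_bound}). The same issue appears in your base case, where you treat the block following $\tau_0 \in [\lceil T/4\rceil, T_1]$ as ending at $T_1$ rather than $T_2$. Once the indexing is corrected, the rest of your ratio estimate goes through essentially as you describe, with the $\beta$-powers arising from comparing $\gamma_{T_i+1}$ to $\gamma_T$ via polynomial decay and $\gamma_{T_{i+2}}$ to $\gamma_{T_{i+1}}$.
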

\begin{proof}
 We first consider $i\geq 1$. 
 If $\mathbb{E}[F(x_{\tau_{i+1}})] \leq \mathbb{E}[F(x_{\tau_i})]$, the proof is done. Else, using Lemma~\ref{lem:look_ahead} with $t_0 =\tau_i$ and $t_1 = T_{i+2}$, and the fact that $\alpha_t$ is a decreasing sequence, we get:
 \begin{align}
\frac{\sum_{t=\tau_i}^{T_{i+2}}2\alpha_t\mathbb{E}\left[F(x_t)-F(x_{\tau_i})\right] }{T_{i+2}-\tau_i + 1}&\leq \frac{\sum_{t=\tau_i}^{T_{i+2}}G^2\alpha_t^2 }{T_{i+2}-\tau_i+1}\leq G^2 \alpha_{T_i+1}^2. \label{eq:first_equation}
 \end{align}
 
 By definition of $\tau_i$, $\mathbb{E}[F(x_{\tau_i})]\leq \mathbb{E}[F(x_t)]$ whenever $T_i <  t \leq T_{i+1}$. Hence, 
 \begin{align}
 G^2 2^{-2i}\gamma_{T_i+1}^2 &= G^2 \alpha_{T_i+1}^2 \geq \tfrac{\sum_{t=\tau_i}^{T_{i+2}}2\alpha_t\mathbb{E}\left[F(x_t)-F(x_{\tau_i})\right] }{T_{i+2}-\tau_i + 1} \geq \tfrac{\sum_{t=T_{i+1}+1}^{T_{i+2}}2\alpha_t\mathbb{E}\left[F(x_t)-F(x_{\tau_i})\right] }{T_{i+2} - \tau_i +1 },
\end{align} 
where the first equality follows from the definition of $\alpha_t$ in \eqref{eq:modified_step_size}, first inequality follows from Equation~\eqref{eq:first_equation}, and the final inequality follows from the fact that $\mathbb{E}[F(x_{t}) - F(x_{\tau_i})] \geq 0$ when $T_i<t \leq T_{i+1}$ (see definition of $\tau_i$ in \eqref{eq:def_tau}). 

Now, by using the above inequality with the assumption $ \mathbb{E}[F(x_{\tau_{i+1}})]\geq \mathbb{E}[F(x_{\tau_i})]$, and the fact that $T_{i+2} - T_i \geq T_{i+2} - \tau_i +1$, we have: 
\begin{align}
G^2 2^{-2i}\gamma_{T_i+1}^2 
 &\geq 2\alpha_{T_{i+2}}\tfrac{T_{i+2} - T_{i+1}}{T_{i+2}-T_i}\mathbb{E}\left[F(x_{\tau_{i+1}})-F(x_{\tau_i})\right] \stackrel{\zeta_1}{\geq} \tfrac{2\alpha_{T_{i+2}}}{5} \mathbb{E}\left[F(x_{\tau_{i+1}})-F(x_{\tau_i})\right] \nonumber \\
 &=\tfrac{2^{-i}\gamma_{T_{i+2}}}{5} \mathbb{E}\left[F(x_{\tau_{i+1}})-F(x_{\tau_i})\right] \geq \tfrac{2^{-i}\beta \gamma_{T_{i+1}}}{5} \mathbb{E}\left[F(x_{\tau_{i+1}})-F(x_{\tau_i})\right],
 \label{eq:second_equation}
 \end{align}
 where $\zeta_1$ follows from Lemma~\ref{lem:division_length_bound}. The equality follows from definition of $\alpha_t$ and the last inequality follows from the $\beta$-slowly decaying assumption for $\gamma_t$ (Assumption~\ref{as:decay}).
 That is we obtain the result for the case $i\geq 1$. The proof for the case when $i=0$ follows with minor modifications to the arguments given above.
 \end{proof}

We now present a high probability version of Lemma~\ref{lem:good_point_transfer}. 
\begin{lemma}
\label{lem:good_point_transfer_high_probability}
Consider the setting of Lemma~\ref{lem:good_point_transfer}. Let $0\leq i\leq k$ and define $t_0=T_i+1$ for $1\leq i$ and $t_0= \lceil \tfrac{T}{4}\rceil$ for $i=0$. Let $q^{(i)}$ be any probability distribution over $\{t_0,\dots, T_{i+1}\}$. Let $p^{i+1}(t) := \frac{L_t\alpha_t}{\sum_{s=T_{i+1}+1}^{T_{i+2}}L_s\alpha_s}$, where $t\in [T_{i+1}+1,T_{i+2}]$ and the sequence $(L_t)_{T_{i+1}+1}^{T_{i+2}}$ is defined by \eqref{eq:def_L}. Then, for any $\delta_i \in (0,1)$ and $i\in [1,k-1]$, the following holds with probability at least $1-\delta_i$:
\begin{align*}
\sum_{t = T_{i+1}+1}^{T_{i+2}} p^{(i+1)}(t)F(x_t) &\leq  \frac{G^2\gamma_T 2^{-i}}{\beta^2}\left(15+ 120\log{\tfrac{1}{\delta_i}}\right) + \sum_{s = T_{i}+1}^{T_{i+1}} q^{(i)}(s)F(x_s).
\end{align*}
For $i = 0$, the following holds with probability atleast $1-\delta_0$:
\begin{align*}
\sum_{t = T_1+1 }^{T_{2}} p^{(1)}(t)F(x_t) &\leq \frac{G^2\gamma_T 2^{-i}}{\beta^4}\left(15+ 120\log{\tfrac{1}{\delta_i}}\right)+ \sum_{s = \lceil \tfrac{T}{4}\rceil}^{T_{1}} q^{(0)}(s)F(x_s).
\end{align*}

\end{lemma}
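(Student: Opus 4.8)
The plan is to run the argument of Lemma~\ref{lem:good_point_transfer} verbatim at the level of structure, but with three substitutions: Lemma~\ref{lem:look_ahead} is replaced by its high-probability version Lemma~\ref{lem:look_ahead_high_probability} (the $A(\cdot,\cdot)$ form, which carries no dependence on the diameter $D$); the deterministic minimizer $\tau_i$ over the earlier block $(T_i,T_{i+1}]$ is replaced by the arbitrary fixed reference distribution $q^{(i)}$; and the uniform average over the later block $(T_{i+1},T_{i+2}]$ is replaced by the $L_t$-weighted average $p^{(i+1)}$. Since both the claimed inequality and the SGD recursion are unchanged when a constant is added to $F$, I first reduce to the case $F(x^*)=0$, so that $F(x_t)\ge 0$ for all $t$. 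This nonnegativity takes over the role that the minimizing property of $\tau_i$ plays in Lemma~\ref{lem:good_point_transfer}: it is what allows the contribution of the iterates in the earlier block to be discarded.

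Fix $i\ge 1$ and apply Lemma~\ref{lem:look_ahead_high_probability} with $t_0=T_i+1$, $t_1=T_{i+2}$, and the distribution $p$ equal to $q^{(i)}$ extended by zero to $\{T_{i+1}+1,\dots,T_{i+2}\}$. Taking the threshold $\eta=8\alpha_{t_0}^2 G^2\log\tfrac1{\delta_i}$ makes the exceptional probability at most $\delta_i$, so on the good event $\sum_{l=t_0}^{T_{i+1}} q^{(i)}(l)\,A(l,T_{i+2})\le\eta$. I then split each $A(l,T_{i+2})$ into its part over $[l,T_{i+1}]$ and its part over $[T_{i+1}+1,T_{i+2}]$; using $F(x_t)\ge 0$ I drop the nonnegative function-value terms of the first part and lower bound the $-F(x_l)$ terms, and after interchanging the two summations the second part aggregates into a multiple of $\sum_{t=T_{i+1}+1}^{T_{i+2}} p^{(i+1)}(t)F(x_t)$. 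Passing between the $L_t$ produced by the look-ahead sequence on $[T_i+1,T_{i+2}]$ and the $L_t$ that define $p^{(i+1)}$ on the shorter range $[T_{i+1}+1,T_{i+2}]$ is handled by the two-sided estimate $\tfrac1{er}\le L_t\le\tfrac1r$ together with Lemma~\ref{lem:division_length_bound}, which guarantees the two ranges have comparable lengths.

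Rearranging and dividing by the aggregate weight $\sum_{t=T_{i+1}+1}^{T_{i+2}}2L_t\alpha_t$ yields $\sum_t p^{(i+1)}(t)F(x_t)\le(\text{error})+\sum_s q^{(i)}(s)F(x_s)$, and it remains to bound the error. For this I use $\alpha_{t_0}=2^{-i}\gamma_{T_i+1}$, the identity $\alpha_t=2^{-(i+1)}\gamma_t$ on the later block, the lower bound $\sum_t L_t\ge\tfrac1e$, and that $\gamma_t$ is decreasing with at most polynomial decay. Exactly as in Lemma~\ref{lem:good_point_transfer}, $2(T_i+1)\ge T$ for $i\ge 1$ gives $\gamma_{T_i+1}\le\gamma_T/\beta$ and $T_{i+2}\le T$ gives $\gamma_{T_{i+2}}\ge\gamma_T$, which combine to the factor $\gamma_T 2^{-i}/\beta^2$; carrying the absolute constants through produces the claimed $15+120\log\tfrac1{\delta_i}$. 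The case $i=0$ is identical with $t_0=\lceil T/4\rceil$, except that $4\lceil T/4\rceil\ge T$ forces two applications of polynomial decay, $\gamma_{\lceil T/4\rceil}\le\gamma_T/\beta^2$, which is where the $1/\beta^4$ comes from instead of $1/\beta^2$.

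The delicate point, and the one I expect to absorb most of the effort, is the discarding step: unlike in Lemma~\ref{lem:good_point_transfer}, where $\tau_i$ is a true minimizer and the dropped block-$i$ terms are exactly nonnegative, here only $F\ge 0$ is available, so one has to keep careful track so that, after the final division, the coefficient multiplying $\sum_s q^{(i)}(s)F(x_s)$ is at most $1$. This is precisely where the two-sided control of $L_t$ and the length comparison of consecutive blocks (Lemma~\ref{lem:division_length_bound}) are indispensable, and where essentially all of the numerical constants are pinned down; the rest is a faithful, if somewhat laborious, rerun of the proof of Lemma~\ref{lem:good_point_transfer}.
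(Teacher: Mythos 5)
Your plan reproduces the paper's overall structure, but the central algebraic step does not go through, and the fix is a device you have not mentioned. You propose to invoke Lemma~\ref{lem:look_ahead_high_probability} with $p$ equal to $q^{(i)}$ (extended by zero). Interchange the resulting double sum and collect the coefficient of $F(x_t)$: for $t$ in the earlier block one finds $2\alpha_t L_t\sigma(t)-2q^{(i)}(t)\Gamma(t-1)$, where $\sigma(t)=\sum_{s\le t}q^{(i)}(s)$ and $\Gamma(t-1)=\sum_{s\ge t}\alpha_sL_s$. After moving the early-block contribution to the right-hand side, dropping what $F\ge0$ allows, and normalizing by $2\Gamma(T_{i+1})=\sum_{t>T_{i+1}}2\alpha_tL_t$, the coefficient on $q^{(i)}(t)F(x_t)$ is $\Gamma(t-1)/\Gamma(T_{i+1})$, which is strictly greater than $1$ for every $t\le T_{i+1}$. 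It cannot be brought down to $1$ by the tools you cite: the two-sided bound $\tfrac1{er}\le L_t\le\tfrac1r$ and the length comparison of Lemma~\ref{lem:division_length_bound} give only constant-factor control, and indeed the factor is on the order of $5$ when, say, $q^{(i)}$ is a point mass at $T_i+1$. Since this lemma is iterated $k\approx\log_2 T$ times in the proof of Theorem~\ref{thm:general_theorem_high_probability}, any per-step coefficient larger than $1$ produces a polynomial-in-$T$ blowup, so the argument as described does not close.

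What the paper does instead is to apply Lemma~\ref{lem:look_ahead_high_probability} not with $p=q^{(i)}$ but with a specially constructed distribution $\kappa$, defined recursively in \eqref{eq:def_kappa} (and verified to be a probability distribution in Lemma~\ref{lem:sums_to_one}). The recursion for $\kappa$ is engineered precisely so that the telescoping identities $\sigma(t)=\kappa(t)+\sigma(t-1)$ and $\Gamma(t-1)=\alpha_tL_t+\Gamma(t)$ make the coefficient of $F(x_t)$ for early-block $t$ collapse to exactly $-2\Gamma(T_{i+1})q^{(i)}(t)$; after dividing by $2\Gamma(T_{i+1})$ the reference sum appears with coefficient exactly $1$, with no appeal to $F\ge0$ at all. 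This choice of $\kappa$ is the key technical idea of the lemma and is what your proposal is missing. The rest of your plan (using the $A(\cdot,\cdot)$ form of Lemma~\ref{lem:look_ahead_high_probability} to avoid a $D$-dependence, the Chernoff threshold $\eta=8\alpha_{t_0}^2G^2\log\tfrac1{\delta_i}$, the use of $\alpha_{t_0}=2^{-i}\gamma_{T_i+1}$, polynomial decay, and Lemma~\ref{lem:division_length_bound} to land the $\gamma_T2^{-i}/\beta^2$ (or $/\beta^4$) factor) matches the paper and is fine once $\kappa$ replaces $q^{(i)}$ in the concentration step.
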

\begin{proof}
We will only show the case $ 1 \leq i \leq k-1$. The $i=0$ case follows by a similar proof. For $ T_i \leq t \leq T_{i+1}$, we define $\Gamma(t)= \sum_{s = t+1}^{T_{i+2}}\alpha_s L_s$.
We let $\kappa$ be defined as follows over $\{T_i+1,\dots,T_{i+2}\}$:
\begin{multline}\label{eq:def_kappa}
\kappa(T_i+1) := \tfrac{\Gamma(T_{i+1})}{\Gamma(T_i+1)}\cdot q^{(i)}\left(T_i +1\right),\ \kappa(t) := \tfrac{\Gamma(T_{i+1})}{\Gamma(t)}q^{(i)}(t) + \tfrac{\alpha_t L_t\cdot \left(\sum_{s = T_i + 1}^{t-1} \kappa(s)\right)}{\Gamma(t)},\ t\in (T_i+1,T_{i+1}],\\ \ \kappa(t) := 0,\ \forall t\geq T_{i+1}.
\end{multline}
%
From Lemma~\ref{lem:sums_to_one}, we conclude that $\kappa$ is a probability distribution over $\{T_i+1,\dots,T_{i+2}\}$. From Lemma~\ref{lem:look_ahead_high_probability}, we conclude that with probability atleast $1-\delta_i$:

\begin{equation}
(\kappa.A)(t_0,t_1) \leq 8\alpha^2_{t_0}G^2\log{\tfrac{1}{\delta_i}}
\end{equation}
We will show that when this event happens, the inequality in the statement of the lemma holds. If $\sum_{t = T_{i+1}+1}^{T_{i+2}} p^{(i+1)}(t)F(x_t) \leq \sum_{s = T_{i}+1}^{T_{i+1}} q^{(i)}(s)F(x_s) $, then the statement of the lemma holds trivially. Now assume $\sum_{t = T_{i+1}+1}^{T_{i+2}} p^{(i+1)}(t)F(x_t) > \sum_{s = T_{i}+1}^{T_{i+1}} q^{(i)}(s)F(x_s) $. 
We use the fact that $\kappa$ is supported over $\{T_{i}+1,\dots,T_{i+1}\}$ and hence:

$$
(\kappa.A)(t_0,t_1) = \sum_{l=T_i +1}^{T_{i+1}}\sum_{t=l}^{T_{i+2}} \kappa(l)L_t\left[2\alpha_t (F(x_t) - F(x_l))-\alpha_t^2G^2\right]
$$
We exchange summation and collect the coefficients of the term $F(x_t)$ to conclude:
{\small 
\begin{multline*}
\hspace*{-5pt}(\kappa.A)(t_0,t_1) = \hspace*{-8pt}\sum_{t=T_{i+1}+1}^{T_{i+2}}\hspace*{-8pt}L_t\left(\alpha_t F(x_t)  - \alpha_t^2G^2\right)  - \sum_{s = T_{i}+1}^{T_{i+1}}\left( \alpha_s^2G^2 \sigma_sL_s  +2 F(x_s)\left( \sigma_s\alpha_sL_s - \kappa(s)\Gamma(s-1) \right)\right),
\end{multline*}}
where $\sigma(s) :=  \sum_{t=T_{i}+1}^{s}\kappa(s)$ (empty sum being $0$ by definition). By definition of $\sigma(s) = \kappa(s) + \sigma(s-1)$, $\Gamma(s-1) = \alpha_sL_s + \Gamma(s)$ and $\kappa(s) = \tfrac{\Gamma(T_{i+1})}{\Gamma(s)}q^{(i)}(s) + \tfrac{\alpha_s L_s}{\Gamma(s)}\sigma(s-1)$. Therefore, we conclude:
\begin{multline}\label{eq:main_equation_for_this_lemma}
(\kappa.A)(t_0,t_1) = \sum_{T_{i+1}+1}^{T_{i+2}}2\alpha_tL_t F(x_t)  - \sum_{T_{i+1}+1}^{T_{i+2}}\alpha_t^2G^2L_t  - \sum_{s = T_{i}+1}^{T_{i+1}} \alpha_s^2G^2L_s \sigma_s  \\\quad-\left( \sum_{T_{i+1}+1}^{T_{i+2}}2\alpha_tL_t \right)\left(\sum_{s = T_{i}+1}^{T_{i+1}}q^{(i)}(s)F(x_s)\right).
\end{multline}
We recall that $p^{(i+1)}(t) \cdot \left(\sum_{s=T_{i+1}+1}^{T_{i+2}}\alpha_sL_s\right) = \alpha_t L_t$ whenever $T_{i+1} < t\leq T_{i+2}$. The rest of the proof is similar to Equation~\eqref{eq:second_equation} in Lemma~\ref{lem:good_point_transfer}. We use the fact that $\alpha_t$ is the modification of $\gamma_t$, $\gamma_t$ has at most polynmial decay, $ \frac{1}{e(T_{i+2}-T_i)}\leq L_t \leq \frac{1}{T_{i+2} - T_i}$ and Lemma~\ref{lem:division_length_bound} in Equation~\eqref{eq:main_equation_for_this_lemma} to conclude the result.
\end{proof}

\begin{lemma}
\label{lem:sums_to_one}
Let $\kappa$ be as defined in \eqref{eq:def_kappa}. Then, $\kappa$ is a probability distribution over $\{T_i+1,\dots,T_{i+2}\}$.
\end{lemma}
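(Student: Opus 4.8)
The plan is to verify the two defining properties of a probability distribution separately: nonnegativity of every $\kappa(t)$, and that the masses sum to one. Throughout I would use the elementary identity $\Gamma(t-1) = \alpha_t L_t + \Gamma(t)$, which is immediate from the definition $\Gamma(t) = \sum_{s=t+1}^{T_{i+2}}\alpha_s L_s$, together with the facts that $\alpha_s, L_s > 0$ (so $\Gamma(t) > 0$ for every $t \le T_{i+1}$, since $T_{i+2} > T_{i+1}$) and that $q^{(i)}$ is a probability distribution on $\{T_i+1,\dots,T_{i+1}\}$, i.e.\ $\sum_{t=T_i+1}^{T_{i+1}} q^{(i)}(t) = 1$.

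For nonnegativity I would induct on $t$ from $T_i+1$ upward. The base case $\kappa(T_i+1) = \tfrac{\Gamma(T_{i+1})}{\Gamma(T_i+1)} q^{(i)}(T_i+1) \ge 0$ is clear since each factor is nonnegative. For the inductive step, the formula $\kappa(t) = \tfrac{\Gamma(T_{i+1})}{\Gamma(t)} q^{(i)}(t) + \tfrac{\alpha_t L_t}{\Gamma(t)} \sum_{s=T_i+1}^{t-1}\kappa(s)$ expresses $\kappa(t)$ as a nonnegative combination of quantities already known to be nonnegative, so $\kappa(t) \ge 0$; the values $\kappa(t) = 0$ for $t > T_{i+1}$ are trivially nonnegative.

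For the normalization, write $\sigma(t) := \sum_{s=T_i+1}^{t}\kappa(s)$ (with $\sigma(T_i) = 0$) and set $u(t) := \Gamma(t)\,\sigma(t)$; the goal is $\sigma(T_{i+1}) = 1$. Since $\kappa(t) = \sigma(t) - \sigma(t-1)$, multiplying the recursive definition of $\kappa(t)$ by $\Gamma(t)$ gives $(\sigma(t) - \sigma(t-1))\Gamma(t) = \Gamma(T_{i+1}) q^{(i)}(t) + \alpha_t L_t\, \sigma(t-1)$, and rearranging with $\Gamma(t) + \alpha_t L_t = \Gamma(t-1)$ yields the linear recursion $u(t) = u(t-1) + \Gamma(T_{i+1})\, q^{(i)}(t)$, valid for $T_i+1 < t \le T_{i+1}$. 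The base case is $u(T_i+1) = \Gamma(T_i+1)\,\kappa(T_i+1) = \Gamma(T_{i+1})\, q^{(i)}(T_i+1)$. Telescoping gives $u(T_{i+1}) = \Gamma(T_{i+1}) \sum_{t=T_i+1}^{T_{i+1}} q^{(i)}(t) = \Gamma(T_{i+1})$, and dividing by $\Gamma(T_{i+1}) > 0$ gives $\sigma(T_{i+1}) = 1$. Since $\kappa$ vanishes beyond $T_{i+1}$, the total mass on $\{T_i+1,\dots,T_{i+2}\}$ is exactly $1$, and an identical argument handles the $i=0$ case with $t_0 = \lceil T/4\rceil$ in place of $T_i+1$.

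The computation is essentially routine; the one place that needs care is recognizing that the self-referential recursion for $\kappa$ — which involves its own partial sums — collapses to a simple telescoping recursion once one passes to the variable $u(t) = \Gamma(t)\sigma(t)$ and exploits $\Gamma(t-1) = \Gamma(t) + \alpha_t L_t$. Getting the base case $u(T_i+1)$ exactly right, so that the telescope closes to $\sum_t q^{(i)}(t) = 1$ rather than leaving a boundary term, is the only subtlety, and it hinges precisely on the slightly different prefactor in the definition of $\kappa(T_i+1)$.
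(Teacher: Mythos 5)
Your proof is correct and follows essentially the same route as the paper's: the paper also works with $\sigma(t)$, derives the recursion $\sigma(t) = \tfrac{\Gamma(T_{i+1})}{\Gamma(t)}q^{(i)}(t) + \tfrac{\Gamma(t-1)}{\Gamma(t)}\sigma(t-1)$, and unrolls it to $\sigma(T_{i+1}) = \sum_s q^{(i)}(s) = 1$; your substitution $u(t) = \Gamma(t)\sigma(t)$ just makes the telescoping explicit. The one small difference is that you spell out the nonnegativity induction (needed because $\kappa(t)$ refers to earlier $\kappa(s)$), which the paper dispatches with ``clear from the definition.''
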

The proof of this lemma is given in Section~\ref{sec:proofs_appendix}

\subsection{Proof of Theorem~\ref{thm:general_theorem_expectation}}
\begin{proof} 
Recall the definition of $\tau_i$ in \eqref{eq:def_tau}. Clearly, $\tau_{k+1}= T$. Summing the bounds in Lemma~\ref{lem:good_point_transfer} we conclude:
\begin{align*}
\mathbb{E}[F(x_{T})] &= \mathbb{E}[F(x_{\tau_{k+1}})]=  \mathbb{E}[F(x_{\tau_{0}})] + \sum_{i=0}^{k} \mathbb{E}[F(x_{\tau_{i+1}}) - F(x_{\tau_{i}})] \\
&\leq \mathbb{E}[F(x_{\tau_{0}})] +\frac{5G^2\gamma_T}{\beta^4} +\sum_{i=1}^{k}  \frac{5G^2\gamma_T}{\beta^2}2^{-i} \leq 5G^2\gamma_T\left(\frac{1}{\beta^2} + \frac{1}{\beta^4}\right)   + \inf_{ \lceil \frac{T}{4}\rceil \leq t \leq T_1}\mathbb{E}[F(x_t)].
\end{align*}
We conclude the result by noting that $x_t = y_t$ for all $ t \leq T_1$. 
\end{proof}

\subsection{Proof of Theorem~\ref{thm:general_theorem_high_probability}}
\begin{proof} 
This proof is similar to the proof of Theorem~\ref{thm:general_theorem_expectation}, but instead of Lemma~\ref{lem:good_point_transfer} we use Lemma~\ref{lem:good_point_transfer_high_probability}. In Lemma~\ref{lem:good_point_transfer_high_probability}, we pick $q^{(i)} = p^{(i)}$ for $1\leq i\leq k-1$ and we let $q^{(0)}$ be arbitrary. We let $\delta_i = \frac{\delta}{2^{i+2}}$. By union bound, the inequalities in the statement of Lemma~\ref{lem:good_point_transfer_high_probability} hold for all $0\leq i \leq k-1$ simultaneously with probabiliy atleast $1- \sum_{i=0}^{k-1}\delta_i \geq 1 - \frac{\delta}{2}$. Summing all these inequalities, we conclude:
\begin{align*}
\sum_{t = T_{k}+1}^{T_{k+1}} p^{(k)}(t)F(x_t) &\leq  \gamma_T G^2\left[120\log(\tfrac{1}{\delta}) + 400\right] \left[\frac{1}{\beta^2} + \frac{1}{\beta^4}\right]+\sum_{s = \lceil\tfrac{T}{4}\rceil }^{T_{1}} q^{(0)}(s)F(x_s).
\end{align*}

We note that the distribution $p^{(k)}$ has unit mass over the point $T_{k+1} = T$ and that $x_t = y_t$ when $t \leq T_1$ to conclude the result.
\end{proof}
\subsection{Proof of Theorem~\ref{thm:main_theorem_1}}
\begin{proof}
 We note that the step size defined in Equation~\eqref{eq:weak_step_size} is the modification of the standard step size $\gamma_t = \frac{C}{\sqrt{T}}$. Let $y_t$ be the output of SGD under the assumptions of the theorem when step size $\gamma_t$ is used. Using the fact that infimum is smaller than any weighted average, we have: 
\begin{multline*}
 \inf_{\lceil\tfrac{T}{4}\rceil \leq t\leq T_1} \mathbb{E}[F(y_t) - F(x^{*})]  \leq \tfrac{1}{T_1 - \lceil\tfrac{T}{4}\rceil +1} \sum_{t=\lceil\tfrac{T}{4}\rceil }^{T_1} \mathbb{E}[F(y_t) - F(x^{*})]
\\\leq  \tfrac{2}{T_1}\sum_{t=1 }^{T_1} \mathbb{E}[F(y_t) - F(x^{*})] \stackrel{\zeta_1}{\leq} \tfrac{2}{\sqrt{T_1}} \left[ \tfrac{D^2\sqrt{T_1}}{C} + CG^2\sqrt{T_1}\right] \leq \tfrac{4}{\sqrt{T}} \left[ \tfrac{D^2}{C} + CG^2\right],
\end{multline*}
where the second line follows from $T_1 \leq 2(T_1 - \lceil\frac{T}{4}\rceil +1)$. $\zeta_1$ follows from the standard analysis \cite{bubeck2015convex}.  Final inequality follows from the fact that $\frac{T}{4}\leq T_1 \leq \frac{T}{2}$. We note that $\gamma_t$ satisfies the conditions for Theorem~\ref{thm:general_theorem_expectation} with $\beta = 1$. We invoke Theorem~\ref{thm:general_theorem_expectation} to conclude the bound on expectation. The above proof in expectation also works for GD . We take $\hat{g}_t = \nabla F$ and SGD is the same as GD. Here each $x_t$ and $y_t$ is a deterministic point mass. Therefore, the expectation bound for the last iterate of SGD holds for the last iterate of GD.

We will now prove the high probability bound. Let $t_0 = \lceil \tfrac{T}{4}\rceil $, $t_1 = T_1$, and $\alpha_t = \gamma_t = \frac{C}{\sqrt{T}}$ for $t\in [t_0,t_1]$. Then using Lemma~\ref{lem:look_ahead_high_probability}, the following holds with probability atleast $1-\delta$: 
$$A^{*}(t_0,t_1) \leq 2D^2L_{t_0} + 8\alpha_{t_0}^2G^2 \log{\tfrac{1}{\delta}}.$$
Using $ \frac{1}{e(t_1-t_0+1)} \leq L_{t} \leq \frac{1}{t_1-t_0+1}$ and proceeding similarly as above, we have w.p. $\geq 1-\frac{\delta}{2}$,

\begin{align*}
\inf_{\lceil\tfrac{T}{4}\rceil \leq t \leq T_1} \mathbb{E}[F(y_t) - F(x^*)] \leq \tfrac{6D^2}{C\sqrt{T}} + \tfrac{5CG^2\log{\tfrac{2}{\delta}}}{\sqrt{T}}.
\end{align*}
Theorem now follows by using Theorem~\ref{thm:general_theorem_high_probability} with $\beta =1$, $q^{(0)}(t) = \frac{1}{T_1-\lceil\frac{T}{4}\rceil +1}$, and union bound. 
\end{proof}

\subsection{Proof of Theorem~\ref{thm:main_theorem_2}}
 \begin{proof}
 We note that the step size defined in Equation~\eqref{eq:strong_step_size} is the modification of the standard step size $\gamma_t = \frac{1}{\lambda t}$ used for strongly convex functions (see \cite{rakhlin2012making}). Let $y_1,\dots, y_T$ be the output of SGD when step size $\gamma_t$ is used. From Theorem 5 in \cite{rakhlin2012making}, we conclude that:
 \begin{align*}
\inf_{\lceil T/4\rceil \leq t \leq T_1} \mathbb{E}[F(y_t) - F(x^*)] 
\leq \tfrac{1}{T_1 -\lceil\tfrac{T}{4}\rceil +1}\sum_{t = \lceil T/4\rceil}^{T_1}\mathbb{E}[F(y_t) - F(x^*)]  \leq 30\tfrac{G^2}{\lambda T}.
\end{align*}
The expectation bound follows from using above equation with Theorem~\ref{thm:general_theorem_expectation} and noting that $\gamma_t$ satisfies the  required conditions with $\beta=2$. 
 We get high probability bounds by invoking  high probability bounds for suffix averaging from \cite{harvey2018tight}, i.e., w.p. at least $1-\frac{\delta}{2}$, 
\begin{align*}\inf_{\lceil\tfrac{T}{4}\rceil \leq t \leq T_1} F(y_t) - F(x^*) 
 &\leq O\left(\tfrac{G^2 \log\left(\tfrac{1}{\delta}\right)}{\lambda T}\right).
\end{align*}
The result now follows by using Theorem~\ref{thm:general_theorem_high_probability} with $\beta =2$ and $q^{(0)}(t) = \frac{1}{T_1 - \lceil\frac{T}{4}\rceil+1}$.
 \end{proof}

\section{Experiments}
\label{sec:simulations}
We now empirically compare SGD last point with our step-size sequence (Our Method) with the standard steps size sequence (Standard) as well as the averaged iterates of SGD (Averaged). We apply these methods on two non-smooth problems: a) Lasso regression, b) linear SVM training. 

\noindent{\bf Lasso Regression:} We consider gradient descent for $F(x) = \frac{1}{n}\sum_{i=1}^n \|\langle a_i,x \rangle - b_i\|^2 + \lambda \|x\|_1$ for $x \in \mathbb{R}^d$.  Here $a_i \sim \mathcal{N}(0,I_d)$ and $b_i = \langle a_i, x^*\rangle + z_i$ for some $s$ sparse vector $x^*$ and $z_i \sim \mathcal{N}(0,\sigma^2)$. $a_i$ and $z_i$ are all independent. We use the step sizes of $\gamma_t = \frac{C}{\sqrt{T}}$ and let $\alpha_t$ be the modification of $\gamma_t$ as given in Section~\ref{sec:main_results} for total $T$ iterations. 
\begin{figure}[!htb]
\centering
		\begin{tabular}{ccc}
		\includegraphics[width=.32\textwidth]{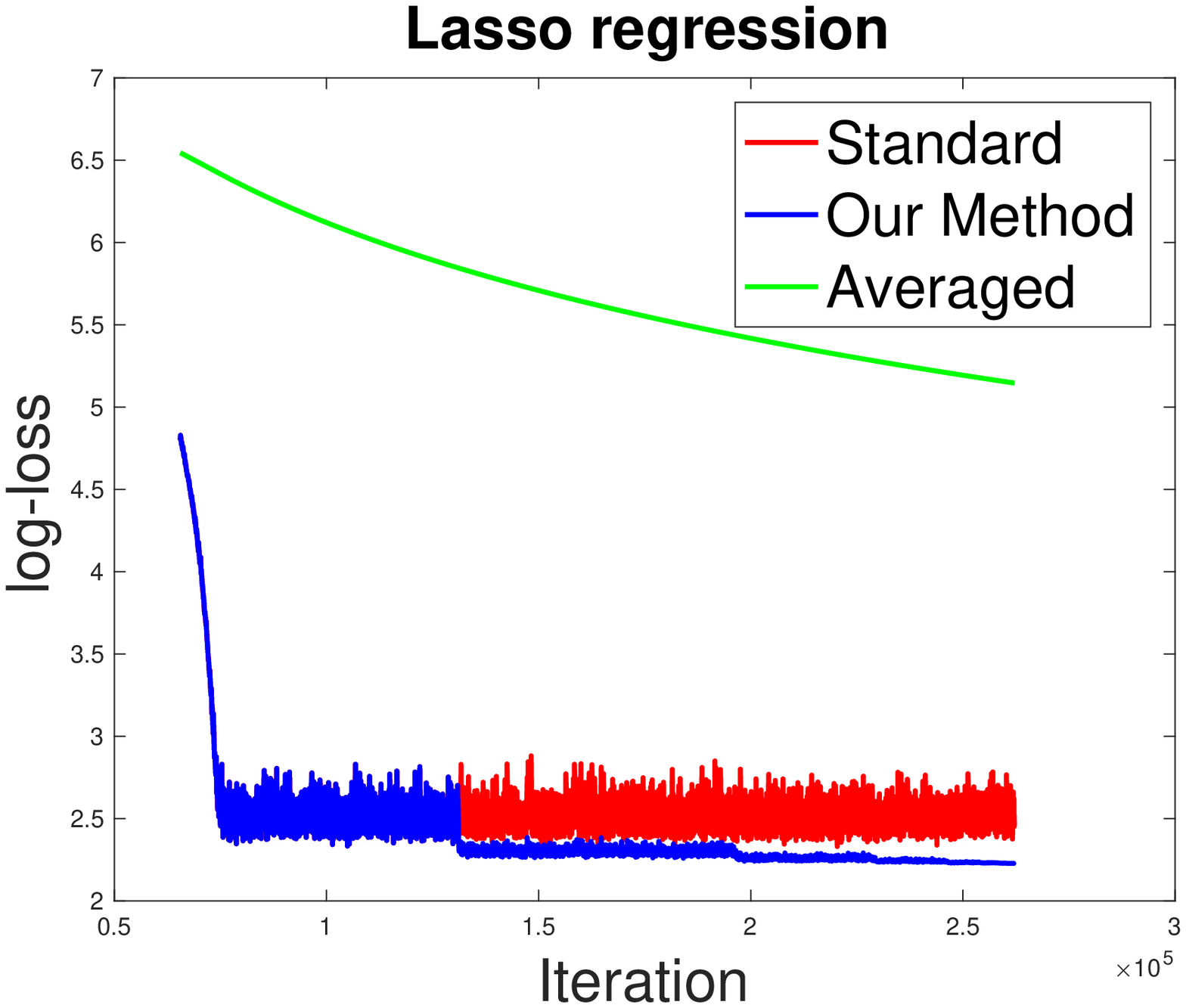}&
		\includegraphics[width=.32\textwidth]{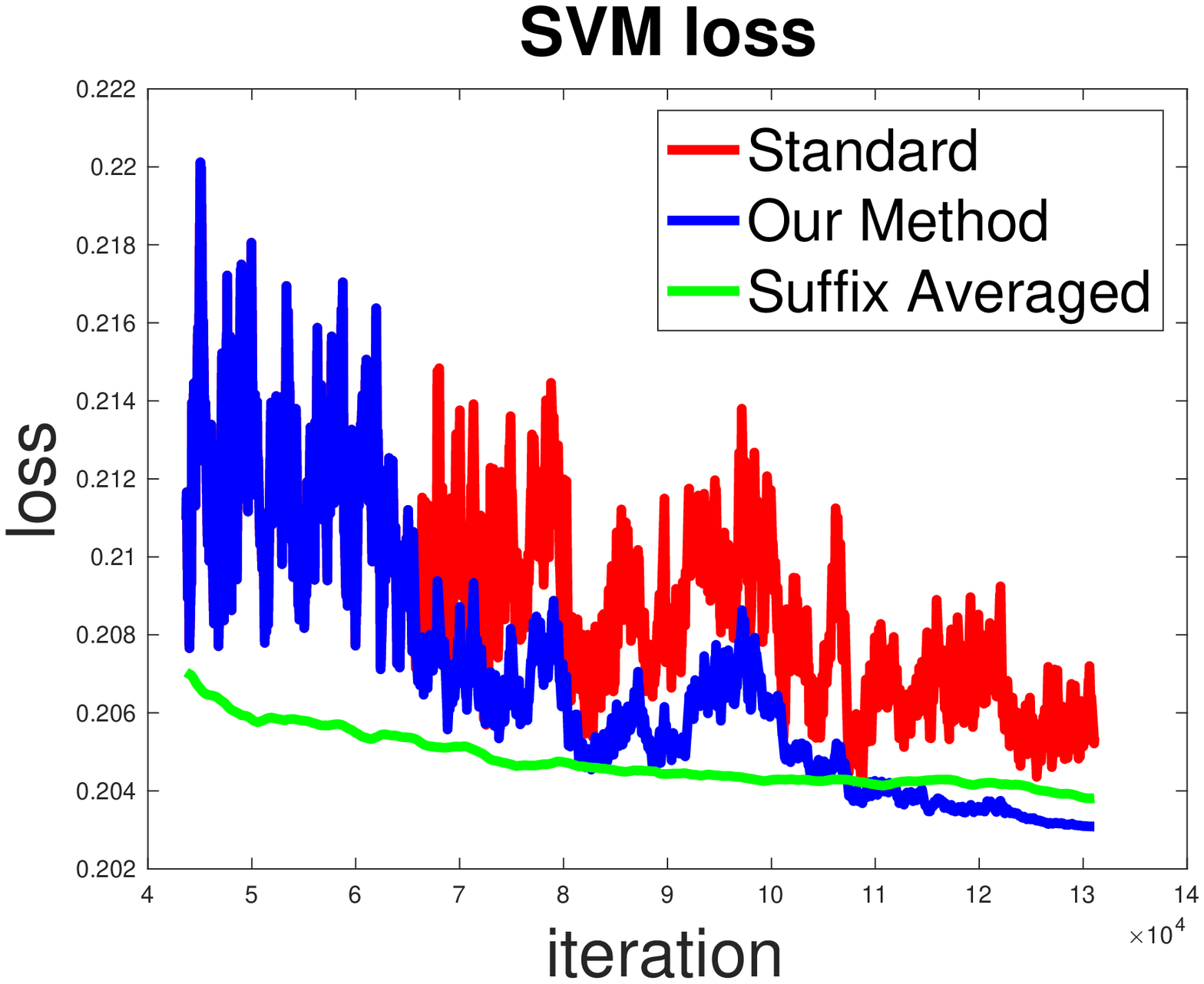}&
		\includegraphics[width=.32\textwidth]{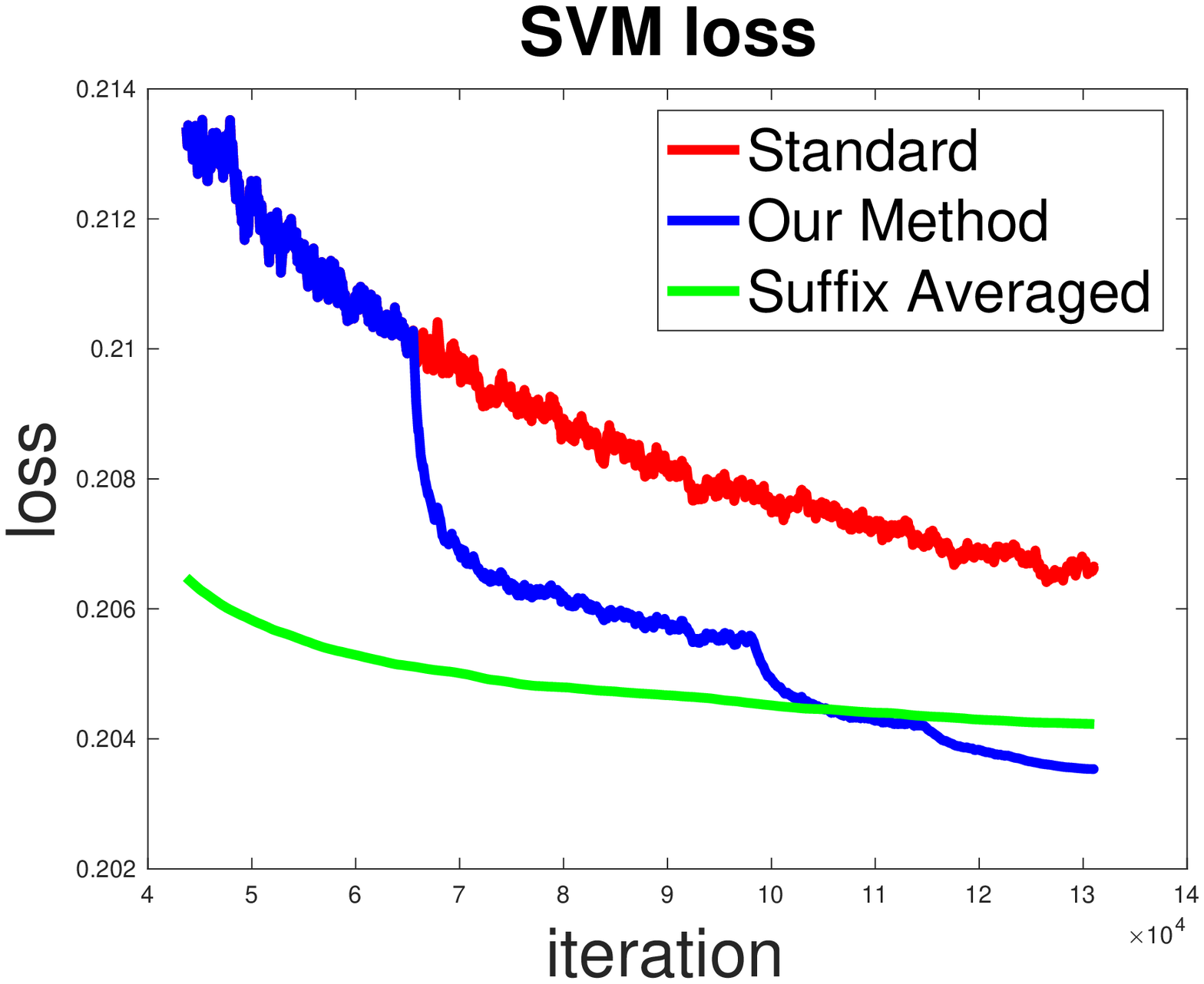}\vspace*{-5pt}\\
		(a)&(b)&(c)\vspace*{-10pt}
		\end{tabular}
		\caption{(a) $F(x)$ vs number of iterations for the Lasso regression (Section~\ref{sec:simulations}). Here  $d = 100$, $s=\|x^*\|_0=60$, $n=80$, $C=4$, $\sigma = 0.1$, $\lambda = 0.2$ and $T= 2^{19}$. Green line indicates the running average of the iterates from $1$ to $t$. (b) SVM loss \eqref{eq:svm} vs number of iterations. We pick $d = 30$, $\sigma = 5$, $\eta = 1$, $n=500$, $\lambda = 0.1$, $T = 2^{17}$. (c) Average over $100$ independent SGD runs for the SVM loss.  Green line is the loss of the average of the last $\frac{T}{4}$ iterates. In all the cases, SGD last point with our step-size sequence produces smaller objective value than the standard step size as well as the averaged iterates of SGD.}
		\label{fig:simple example}
	\vskip -0.1in
\end{figure}
Since the objective is not smooth, the gradient doesn't vanish near the optimum. Therefore, when the standard step size was picked, the iterate $x_t$ kept oscillating around the infimum but never really reaches it. In contrast, our method decreased the step size after sometime which allows better convergence to the optimum (see Figure~\ref{fig:simple example}(a)). 

\noindent{\bf Training SVMs:} We consider training SVMs which is a typical example where non-smooth SGD is heavily used \cite{shalev2011pegasos}. For our experiments, we generate data as follows. Let $a_i \sim \mathcal{N}(0,\sigma^2 I_d)$ and the label $b_i = \mathrm{sgn}(a_i(1)+z_i)$ where $z_i \sim \mathcal{N}(0,\eta^2)$. We generate $n=500$ points in $d=30$ dimensions. The SVM training problem is now: 
\begin{equation}\label{eq:svm}F(x) =   \frac{1}{n}\sum_{i=1}^{n}\max(0,1-b_i\langle a_i,x\rangle)+ \frac{\lambda}{2} \|x\|^2,\end{equation}
where $\lambda=0.1$. 
Since the objective is $\lambda$ strongly convex, we consider step sizes of $\gamma_t := \frac{1}{\lambda t}$ for the standard method and the modified step sizes given in Equation~\eqref{eq:strong_step_size} for our method.  Figure~\ref{fig:simple example} (b) plots loss during a typical run of SGD and Figure~\ref{fig:simple example} (c) for the loss averaged over $100$ independent runs of SGD for the same problem with the same initial point. The last point of SGD with modified step size sequence (Our Method)  in blue consistently outperformed the standard SGD (Standard) in red. The green line denotes the loss of the average of the last $\frac{T}{4}$ iterates.

%
%

\section{Conclusions and Discussion}\label{sec:disc}
We studied the fundamental question of sub-optimality of the last point of SGD/GD for general non-smooth convex functions as well as for strongly-convex functions. We proposed a novel step-size sequence that leads to information theoretically optimal rates in both the above mentioned settings. Our result proves a more general result for any ``modified step-size'' of a decaying standard step-size, and uses a novel technique of tracking best iterate in each time-interval and ensuring that the later iterates do not significantly deviate from the best iterate in the previous time interval. We also provide a high-probability bound using a super-martingale technique from~\cite{harvey2018tight}. Simulations show that our step-size indeed leads to better last point than the standard step-size sequences. 

Our approach fundamentally exploits an assumption that we apriori know the total number of iterations $T$. Hence, our result does not provide an any-time algorithm. In contrast, existing any-time results have an extra $\log T$ multiplicative factor in the sub-optimality. We conjecture that this gap is fundamental and  every {\em any-time} algorithm would suffer from the extra $\log T$ factor. We give lower bounds for the strongly convex case to show that for any choice of step sizes, the algorithm is either sub-optimal in expectation or almost surely so infinitely often. 
\section*{Acknowledgements}
This research was partially supported by ONR N00014-17-1-2147 and MIT-IBM Watson AI Lab.
\bibliographystyle{unsrt}  

\bibliography{paper}
\appendix
\section{Proofs of Technical Lemmas}
\label{sec:proofs_appendix} 
\subsection{Proof of Lemma~\ref{lem:look_ahead_high_probability}}
\begin{proof}[Proof of Lemma~\ref{lem:look_ahead_high_probability}]
We fix $l$ such that $t_0 \leq l\leq t_1$. In this proof, we will freely use the fact that $\alpha_{t_0} \geq \alpha_t$ whenever $t_0 \leq t$. Let $l \leq t\leq t_1$ Define $\Delta_t = \langle\hat{g}_t(x_{t}) -g_t(x_{t}),x_{t}-x_{l}\rangle $. We note that $x_t$ are random variables and are functions of $\hat{g}_1,\dots, \hat{g}_{t-1}$ only. We define the sigma-field $\mathcal{F}_t := \sigma(\hat{g}_1,\dots, \hat{g}_{t})$.  

We use the following notation for the sake of convenience: $D_t := \|x_t - x_{l}\|^2$. Clearly, $D_t$ is $\mathcal{F}_{t-1}$ measurable and $\Delta_t$ is $\mathcal{F}_{t}$ measurable. It is clear from the definition of $\Delta_t$ that $\mathbb{E}[\Delta_t|\mathcal{F}_{t-1}] = 0$ and $|\Delta_t| \leq 2 G\|x_t-x_{l}\| = 2G\sqrt{D_t}$. 

By Hoeffding's lemma, we conclude that for any $\mu \in \mathbb{R}$, we conclude:
\begin{equation}
\mathbb{E}\left[\exp(\mu \Delta_t)\bigr|\mathcal{F}_{t-1}\right] \leq \exp(2G^2D_t\mu^2) \label{eq:hoeffding_lemma}
\end{equation}

Let $\lambda =  \frac{1}{8 \alpha_{t_0}^2G^2}$. For $t_0 \leq t \leq t_1$, consider $$M_t := \exp\left( \sum_{s=l}^{t}-2\lambda L_s\alpha_s\Delta_s + \lambda(L_s-L_{s-1})D_{s}\right)\,.$$

Clearly, $M_t$ is $\mathcal{F}_t$ measurable. $M_{l} =1$ since $D_{l} = \Delta_{l} =0$ almost surely. We will show that $M_t$ is a super martingale:
\begin{align*}
\mathbb{E}\left[M_t\bigr|\mathcal{F}_{t-1}\right] &= M_{t-1}\mathbb{E}\left[\exp\left( -2\lambda L_t\alpha_t\Delta_t + \lambda(L_t-L_{t-1})D_{t}\right)\bigr|\mathcal{F}_{t-1}\right] \\
&= M_{t-1} \exp\left(-\lambda L_t^2D_t\right)\mathbb{E}\left[\exp\left( -2\lambda L_t\alpha_t\Delta_t \right)\bigr|\mathcal{F}_{t-1}\right] \\
&\leq M_{t-1} \exp\left(-\lambda L_t^2D_t + 8\lambda^2L_t^2\alpha_t^2G^2D_t\right)\\
&\leq M_{t-1}
\end{align*}

Therefore, \begin{equation}
\mathbb{E}\left[M_{t_1}\right] \leq 1
\label{eq:supermatringale_expectation}
\end{equation}
 From the proof of Lemma~\ref{lem:look_ahead}, for $l \leq t \leq t_1$ we have:
\begin{align}
\|x_{t+1}-x_{l}\|^2 &\leq \|x_t - x_{l}\|^2 + \alpha_t^2\|\hat{g}_t(x_t)\|^2 - 2\alpha_t \langle\hat{g}_t(x_{t}),x_{t}-x_{l}\rangle \nonumber   \\
&\leq  \|x_t - x_{l}\|^2 + \alpha_t^2G^2 -   2\alpha_t \langle g_t(x_{t}),x_{t}-x_{l}\rangle -2\alpha_t \langle\hat{g}_t(x_{t}) -g_t(x_{t}),x_{t}-x_{l}\rangle \nonumber  \\
&\leq  \|x_t - x_{l}\|^2  + \alpha_t^2G^2 +   2\alpha_t (F(x_{l}) - F(x_t))  -2\alpha_t \Delta_t \label{eq:distance_growth_pointwise}
\end{align}
In the third step, we have used the convexity of $F(\cdot)$. Reordering Equation~\eqref{eq:distance_growth_pointwise} and using the notation defined above:

$$2\alpha_t (F(x_{t}) - F(x_{l})) - \alpha_t^2G^2 \leq  D_t - D_{t+1} -2\alpha_t\Delta_t  \,. $$

Multiplying the equation above by $ L_t$ and adding from $t=l$ to $t = t_1$, noting the fact that $D_{l} =0$ and $D_{t_1 + 1} \geq 0$, we conclude:

\begin{align}
\sum_{t=l}^{t_1} L_t\left[2\alpha_t (F(x_{t}) - F(x_{l})) - \alpha_t^2G^2 \right] &\leq  \sum_{t=l}^{t_1} L_t\left(D_t - D_{t+1} -2\alpha_t\Delta_t  \right)\nonumber \\
&\leq  \sum_{t=l}^{t_1}  -2L_t\alpha_t\Delta_t  + (L_t -L_{t-1})D_t  \label{eq:supermartingale_domination}
\end{align}
 We recall the random variable $$A(l,t_1) := \sum_{t=l}^{t_1} L_t\left[2\alpha_t (F(x_{t}) - F(x_{l})) - \alpha_t^2G^2 \right]$$ 

From equations~\eqref{eq:supermatringale_expectation} and~\eqref{eq:supermartingale_domination}, we conclude that for every $l$ such that $t_0 \leq l\leq t_1$:

$$\mathbb{E}\left[\exp(\lambda A(l,t_1))\right] \leq \mathbb{E}\left[M_t\right] \leq 1\,. $$

By convexity of the exponential function, we have: 
\begin{align*}
\mathbb{E}\left[\exp(\lambda (p.A)(t_0,t_1))\right] &\leq \mathbb{E}\sum_{l=t_0}^{t_1}p(l)\mathbb{E}\left[\exp(\lambda A(l,t_1))\right] \leq 1
\end{align*}
By Chernoff Bound, we conclude:

$$\mathbb{P}\left[(p.A)(t_0,t_1) > \eta \right] \leq \exp\left(-\tfrac{\eta}{8\alpha^2_{t_0}G^2}\right)$$

The case for $A^*(t_0,t_1)$ proceeds similarly but this time we use $x^*$ in place of $x_{t_0}$. We define $D_t^* := \|x_t - x^*\|^2$, $\Delta_t^{*} :=  \langle\hat{g}_t(x_{t}) -g_t(x_{t}),x_{t}-x^{*}\rangle$ and $$M_t^{*} := \exp\left( \sum_{i=t_0}^{t}-2\lambda L_i\alpha_i\Delta^*_i + \lambda(L_i-L_{i-1})D^*_{i}\right)$$

We note that for $t_0<  t\leq t_1$, $\mathbb{E}\left[M^*_t \bigr| \mathcal{F}_{t-1}\right] \leq M^*_{t-1}$ and $D^*_{t_0} \leq D^2$. Therefore, 

$$\mathbb{E}M^*_{t_1} \leq \mathbb{E}M^*_1 \leq \exp\left( \tfrac{L^2_{t_0} D^2 +L_{t_0}D^2}{8\alpha^2_{t_0}G^2}\right) \leq \exp\left( \tfrac{2L_{t_0}D^2}{8\alpha^2_{t_0}G^2}\right)$$

Here we have used the fact that $L_{t_0} \leq \frac{1}{t_0-t_1+1} \leq 1$. Noting that $\exp(\lambda A^*(t_{0},t_1)) \leq M^*_{t_1}$  we use Chernoff bound to conclude the result.
\end{proof}

\subsection{Proof of Lemma~\ref{lem:lambda_growth_bound}}
\begin{proof}[Proof of Lemma~\ref{lem:lambda_growth_bound}]
We prove this by induction. The assertion is true for $i =0$. Suppose it is true for $ i =k \leq r-1$. Then, 
\begin{align*}
\lambda_{k+1} &= \lambda_{k} (1+\Gamma\lambda_k) \\
&\leq \left(1+\tfrac{1}{r}\right)^{k}\lambda_0\left(1+ \tfrac{\left(1+\tfrac{1}{r}\right)^k}{re}\right) \\
&\leq  \left(1+\tfrac{1}{r}\right)^{k}\lambda_0\left(1+ \tfrac{\left(1+\tfrac{1}{r}\right)^k}{re}\right) \\
&\leq \left(1+\tfrac{1}{r}\right)^{k+1}\lambda_0
\end{align*} 
The we have proved the assertion through induction.
\end{proof}

\subsection{Proof of Lemma~\ref{lem:sums_to_one}}
\begin{proof}[Proof of Lemma~\ref{lem:sums_to_one}]
We take the definitions of the terms used from the proof of Lemma~\ref{lem:good_point_transfer_high_probability}.
It is clear from the definition that $\kappa(t) \geq 0$. Since $\kappa(t) =0$ for $t > T_{i+1}$, it is sufficient to show that $\sum_{s=T_i+1}^{T_{i+1}} \kappa(s) = 1$.

We define $\sigma(t) = \sum_{s=T_i+1}^{t}\kappa(s)$ (an empty sum denotes 0). By definition of $\kappa$, for $ T_i +1\leq t \leq T_{i+1}$ 
\begin{align*}
\sigma(t) &= \frac{\Gamma(T_{i+1})}{\Gamma(t)}q^{(i)}(t) + \left(1+\frac{\alpha_t L_t}{\Gamma(t)}\right)\sigma(t-1) \\
&= \frac{\Gamma(T_{i+1})}{\Gamma(t)}q^{(i)}(t) + \frac{\Gamma(t-1)}{\Gamma(t)}\sigma(t-1)
\end{align*}
Continuing the above recursion, we conclude:
$$\sigma(t) =  \frac{\Gamma(T_{i+1})}{\Gamma(t)} \sum_{s=T_i+1}^{T}q^{(i)}(t)$$

Since $q$ is a probability distribution over $\{T_{i}+1,\dots,T_i\}$, we conclude $$\sigma(T_{i+1}) = \sum_{s= T_i+1}^{T_{i+1}}q^{(i)}(t) = 1$$.

\end{proof}

\section{Proofs of Lower Bounds}
\label{sec:lower_bounds}

We will prove theorem~\ref{thm:lower_bounds} for $G=5$ and $\mu = 1$ for the sake of convenience. We can handle the general case by considering the transformation $F_0(x) = \frac{25\mu}{G^2}F(\frac{G}{2\mu}x)$. We scale the domain as $D_0 := \frac{5\mu}{G}D$.  If $F$ is $\mu$ strongly convex and $G$ Lipschitz, then $F_0$ is $1$ strongly convex and $5$ Lipschitz. We take the subgradient oracle for $F_0$ to be $\hat{g}^{0}_t(x) :=\frac{5}{G}\hat{g}_t(\frac{Gx}{5\mu})$. It is easy to check that if SGD for $F(.)$ with step sizes $\alpha_t$, the iterates are $x_t$, then starting from $x^0_0 := \frac{5\mu}{G}x_0$ and using step sizes $\alpha^{0}_t := \mu \alpha_t$ and the subgradient oracle defined above, the iterates for $F_0$ is $x_t^{0} = \frac{5\mu}{G}x_t$. Therefore, $F_0(x_t^{0}) = \frac{25\mu}{G^2}F(x_t)$ and the proof below goes through seamlessly. This is similar to the rescaling used for the lowerbounds in \cite{harvey2018tight}. 

Without loss of generality, we will restrict our attention to strictly positive step size sequences: $\gamma_t >0$. We further restrict the possible values of $\gamma_t$ in the following lemma:

\begin{lemma}
\label{lem:sub_harmonic_steps}
If the step size sequence $\gamma_t$ is such that there is an infinite sequence of times $t_k$ such that $\lim_{k\to \infty}t_k\gamma_{t_k} = \infty$, then SGD is bad in expectation. Therefore, we can restrict our consideration to step size sequences of the form $\gamma_t = O(\frac{1}{t})$.
\end{lemma}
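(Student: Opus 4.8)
The goal is Lemma~\ref{lem:sub_harmonic_steps}: if there is an infinite subsequence $t_k$ with $t_k\gamma_{t_k}\to\infty$, then SGD is bad in expectation, so that we may restrict attention to $\gamma_t = O(1/t)$.

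\textbf{Plan.} The plan is to exhibit a single fixed objective $F$ satisfying Assumptions~\ref{as:lc},~\ref{as:dia},~\ref{as:sc} (with the normalization $G=5$, $\lambda=1$) together with a subgradient oracle such that, whenever the step size is ``too large'' at time $t$, the iterate $x_{t+1}$ is forced to move a macroscopic distance away from the minimizer, giving $\mathbb{E}[F(x_{t+1})-F(x^*)] = \Omega(\gamma_t^2)$ or better. Then along the subsequence $t_k$ we get $t_{k}\mathbb{E}[F(x_{t_k+1})-F(x^*)] \geq c\, t_k \gamma_{t_k}^2$, and since $t_k\gamma_{t_k}\to\infty$ while $\gamma_{t_k}$ is bounded (step sizes are bounded because the domain is bounded and large steps just get projected back), the quantity $t_k\gamma_{t_k}^2 = (t_k\gamma_{t_k})\cdot\gamma_{t_k}$ need not blow up --- so the more robust route is to pick $F$ so that a large step produces a deviation that does not shrink with $\gamma_t$ at all. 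A clean choice is the one-dimensional function $F(x)=\tfrac12 x^2$ restricted to an interval, or better, a ``wedge'' $F(x) = \tfrac12 x^2 + 5|x|$ type function (strongly convex, Lipschitz on a bounded interval), whose subgradient at the minimizer $x^*=0$ can be made to have magnitude bounded below, say $|g|\in[1,5]$, by a sign-choosing adversarial oracle. With such an oracle, starting essentially at $x^*$, one step of size $\gamma_t$ moves the iterate to distance $\asymp \gamma_t$ (before projection), hence to distance $\asymp \min(\gamma_t, D_0)$; if $t_k\gamma_{t_k}\to\infty$ then for large $k$ either $\gamma_{t_k}$ stays bounded away from $0$ infinitely often (and then $F(x_{t_k+1})-F(x^*)=\Omega(1)$, so $t_k\mathbb{E}[\cdot]\to\infty$ trivially) or $\gamma_{t_k}\to 0$ but $t_k\gamma_{t_k}\to\infty$.

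\textbf{The core computation.} For the second and harder case ($\gamma_{t_k}\to 0$), I would track the full SGD trajectory, not just one step. The standard recursion (Lemma~\ref{lem:look_ahead}-style, but now with a lower bound) gives $\mathbb{E}\|x_{t+1}-x^*\|^2 \geq \mathbb{E}\|x_t-x^*\|^2 + \gamma_t^2\mathbb{E}\|\hat g_t\|^2 - 2\gamma_t\mathbb{E}\langle g_t, x_t-x^*\rangle$, and for the wedge function the cross term is controlled by strong convexity from above and by the adversarial oracle so that the gradient norm-squared term $\gamma_t^2\cdot\Omega(1)$ dominates when $x_t$ is already near $x^*$. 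The idea is that the oracle always pushes the iterate \emph{away} in a direction perpendicular to, or anti-aligned with, the pull toward $x^*$: use a two-dimensional $F(x,y) = \tfrac12(x^2+y^2) + 5|y|$ with $x^*=(0,0)$, initialize at $x_1$ with $x_1$-coordinate $0$, keep the $x$-coordinate identically $0$ by giving zero $x$-gradient, and in the $y$-coordinate have the oracle return subgradient $5\cdot\mathrm{sgn}(\text{random }\pm)+y_t$, independent Rademacher signs. Then $y_{t+1} = \Pi(y_t - \gamma_t(y_t + 5\epsilon_t))$, and since $\mathbb{E}[\epsilon_t]=0$ but $\mathbb{E}[\epsilon_t^2]=1$, the variance injected at step $t$ is $\asymp \gamma_t^2$, yielding $\mathbb{E}[y_{t+1}^2]\gtrsim \gamma_t^2$ (after projection, $\gtrsim \min(\gamma_t^2,D_0^2)$). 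Hence $\mathbb{E}[F(x_{t_k+1})-F(x^*)] \gtrsim \min(\gamma_{t_k}^2, D_0^2)$, and multiplying by $t_k$: if $\gamma_{t_k}$ is bounded below along a sub-subsequence we are done as above; otherwise $t_k\gamma_{t_k}^2 \cdot$ --- wait, this is exactly the subtlety: $t_k\gamma_{t_k}^2$ is \emph{not} forced to diverge by $t_k\gamma_{t_k}\to\infty$.

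\textbf{Main obstacle and its resolution.} The real obstacle is precisely bridging $t_k\gamma_{t_k}\to\infty$ to a divergent suboptimality, when $\gamma_{t_k}$ could be, say, $\asymp 1/\sqrt{t_k}$ (so $t_k\gamma_{t_k}=\sqrt{t_k}\to\infty$ but $t_k\gamma_{t_k}^2\asymp 1$). The resolution is that one large step is not enough; one must accumulate contributions of many large steps. The plan is: the distance $\mathbb{E}\|x_t-x^*\|^2$ evolves like a linear recursion with contraction factor $(1-\gamma_s)^2$-ish (from $\lambda=1$ strong convexity) and additive injection $\asymp\gamma_s^2$; solving it gives $\mathbb{E}\|x_{t+1}-x^*\|^2 \gtrsim \sum_{s\le t}\gamma_s^2\prod_{u=s+1}^{t}(1-c\gamma_u)$. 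When $\gamma_u$ is of order $1/u^\theta$ with $\theta<1$, a standard estimate shows this sum is $\asymp \gamma_t$ (the ``last few terms dominate'' phenomenon for slowly decaying steps), so $\mathbb{E}[F(x_{t}) - F(x^*)] \gtrsim \gamma_t$, and then $t_k\mathbb{E}[F(x_{t_k})-F(x^*)]\gtrsim t_k\gamma_{t_k}\to\infty$. I would carry this out by (i) setting up the 2D wedge example and the Rademacher oracle, (ii) deriving the exact lower-bound recursion for $\mathbb{E}[y_t^2]$ including the projection (arguing the projection onto a large enough interval is inactive in expectation, or just taking $D_0$ large), (iii) proving the key analytic lemma that $\sum_{s\le t}\gamma_s^2\prod_{u>s}(1-c\gamma_u) \gtrsim \gamma_t$ whenever $\gamma$ is positive, non-increasing (WLOG), and $t\gamma_t$ is large --- this is the one genuinely delicate estimate and likely needs a case split on whether $\sum\gamma_s$ diverges and how fast --- and (iv) concluding along the subsequence $t_k$. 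If $\gamma_t$ is not assumed monotone, I would first pass to the fact that we only need it along $t_k$ and can compare with an auxiliary non-increasing envelope, or note that the injection terms are all nonnegative so dropping all but the single term $s=t_k$ already gives $\gtrsim \gamma_{t_k}^2$, and handle the monotone-envelope refinement only when $\gamma_{t_k}^2 t_k$ stays bounded.
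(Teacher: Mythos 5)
There is a genuine gap, and it stems from the student not exploiting the linear term of the wedge objective at the crucial moment. The paper's proof uses $F(x)=|x|+\tfrac12 x^2$ on $[-1,1]$ with the oracle $\hat g_t(x)=\mathsf{sgn}(x)+x+3\epsilon_t$ (which is unbiased because its mean $\mathsf{sgn}(x)+x$ is a valid subgradient), and the key point is that $F(x)-F(0)\ge |x|$, so it suffices to lower-bound the \emph{first} moment $\mathbb{E}|x_{t+1}|$. A single step does this: with probability at least $\tfrac12$, $\epsilon_t$ has the opposite sign to $x_t$, and then $\mathsf{sgn}(x_t)+x_t+3\epsilon_t$ has magnitude at least $1$ and the opposite sign of $x_t$, so the step pushes $x_t$ away from $0$ by at least $\gamma_t$ (before capping at $1$), giving $\mathbb{E}|x_{t+1}|\ge\tfrac12\min(1,\gamma_t)$ and hence $(t_k+1)\,\mathbb{E}[F(x_{t_k+1})-F(0)]\ge\tfrac12\min(t_k,t_k\gamma_{t_k})\to\infty$.

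Your proposal picks up essentially the right objective (a ``wedge'' with a Rademacher oracle) and correctly observes that one step moves the iterate to distance $\asymp\gamma_t$, but then measures suboptimality only through the \emph{second} moment, claiming $\mathbb{E}[y_{t+1}^2]\gtrsim\gamma_t^2$ and hence $\mathbb{E}[F-F^*]\gtrsim\gamma_t^2$. You correctly flag that $t_k\gamma_{t_k}^2$ need not diverge even if $t_k\gamma_{t_k}\to\infty$, and to repair this you pivot to a multi-step accumulation argument resting on the unproven estimate $\sum_{s\le t}\gamma_s^2\prod_{u>s}(1-c\gamma_u)\gtrsim\gamma_t$ together with a ``WLOG non-increasing'' reduction that is not automatic. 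This detour is unnecessary: since your own $F$ has a $5|y|$ term, $F(y)-F(0)\ge 5|y|$ and the single-step lower bound $\mathbb{E}|y_{t+1}|\gtrsim\min(1,\gamma_t)$ closes the argument in one line, exactly as the paper does. You should also fix the oracle: as written, $\hat g = y_t + 5\epsilon_t$ has mean $y_t$, which is not a subgradient of $\tfrac12 y^2+5|y|$ at $y_t\ne 0$; the deterministic term $5\,\mathsf{sgn}(y_t)$ must be included (and then the $\epsilon_t$ coefficient chosen so the noise can overwhelm it, as in the paper's $3\epsilon_t$ against the bounded $\mathsf{sgn}(x)+x$).
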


\begin{proof}
Consider the function $F : [-1,1] \to \mathbb{R}$ defined by $F(x) = |x| + \frac{x^2}{2}$. $F$ has a global optimum at $x =0$ and it is $1$ strongly convex. Let $\epsilon_t$ be a sequence of i.i.d. rademacher random variables (i.e, uniform over $\{-1,1\}$). We let the subgradient oracle to return $\hat{g}_t(x) = \mathsf{sgn}(x) + x + 3\epsilon_t$. Clearly,
$$|x_{t+1}| =\min( |x_t - \gamma_t(\mathsf{sgn}(x_t) + x_t + 3\epsilon_t)|,1)\,.$$
$\epsilon_t$ is independent of $x_t$ and conditioned on the value of $x_t$, with probability atleast $\frac{1}{2}$, 
$\epsilon_t$ has the opposite sign as $ x_t $. When this happens, $(\mathsf{sgn}(x_t) + x_t + 3\epsilon_t)$ has the opposite sign of $x_t$ and $|(\mathsf{sgn}(x_t) + x_t + 3\epsilon_t)| \geq 1$. Therefore under this event, $|x_t - \gamma_t(\mathsf{sgn}(x_t) + x_t + 3\epsilon_t)| \geq |x_t| + \gamma_t \geq \gamma_t$.

Therefore, we conclude:
$$\mathbb{E}|x_{t+1}| \geq \frac{1}{2}\min(1,\gamma_t)\,.$$ Considering the fact that $(t_k+1)\mathbb{E}\left(F(x_{t_k+1})- F(0)\right)\geq t_{k}\mathbb{E}|x_{t_k+1}| \geq \frac{1}{2}\min(t_k,\gamma_{t_k}t_k) \to \infty$, we conclude that SGD with this step size is bad in expectation.

\end{proof}

Henceforth, we will restrict our attention without loss of generality to step size sequences such that $\gamma_t = O(\frac{1}{t})$.
We will first consider the function $F_1(x) = \frac{1}{2}x^2$ over the set $[-1,1]$. Let the infinite horizon learning rate be $(\gamma_t)_{t \in \mathbb{N}}$ at each time instant, the subgradient oracle returns $x + \epsilon_t$ where $\epsilon_t$ is a sequence of i.i.d. uniform random variable over $\{-1,1\}$ (that is rademacher random variables). Let the iterates of SGD be $z_t$ and $z_1 = 1$.

\begin{lemma}
Let $T_0$ be the smallest time such that $\gamma_t < 1$ for all $t \geq T_0$. Then, for every $t \geq T_0$
\begin{enumerate}
\item 
$$\mathbb{E}|z_{t+1}|^2 = (1-\gamma_t)^2 \mathbb{E} z_t^2 + \gamma_t^2$$
\item
$$\mathbb{E}\|z_t\|^2 \geq \frac{1}{t-T_0+1}$$
\end{enumerate}

\label{lem:square_example}
\end{lemma}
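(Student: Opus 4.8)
The plan is to reduce the SGD update to an explicit one-dimensional linear recursion by observing that the projection onto $[-1,1]$ becomes inactive once $\gamma_t < 1$, and then to prove the variance lower bound by a short induction.

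First I would record that $|z_t| \le 1$ for every $t$, since $z_1 = 1$ and $\Pi_{[-1,1]}$ maps into $[-1,1]$. With the stated oracle the update is $z_{t+1} = \Pi_{[-1,1]}\big((1-\gamma_t)z_t - \gamma_t\epsilon_t\big)$. For $t \ge T_0$ we have $\gamma_t \in (0,1)$, so the argument of the projection has absolute value at most $(1-\gamma_t)|z_t| + \gamma_t|\epsilon_t| \le (1-\gamma_t) + \gamma_t = 1$; hence the projection acts as the identity and $z_{t+1} = (1-\gamma_t)z_t - \gamma_t\epsilon_t$ exactly. Since $\epsilon_t$ is independent of $z_t$ with $\mathbb{E}\epsilon_t = 0$ and $\mathbb{E}\epsilon_t^2 = 1$, squaring and taking expectations immediately gives $\mathbb{E} z_{t+1}^2 = (1-\gamma_t)^2\,\mathbb{E} z_t^2 + \gamma_t^2$, which is the first claim.

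For the second claim I would write $a_t := \mathbb{E} z_t^2$ and induct on $t \ge T_0$ with hypothesis $a_t \ge \tfrac{1}{t-T_0+1}$. For the base case $t = T_0$ I would argue $a_{T_0} = 1$: if $T_0 = 1$ this is just $z_1 = 1$; if $T_0 \ge 2$, minimality of $T_0$ forces $\gamma_{T_0-1}\ge 1$ (otherwise $\gamma_t < 1$ for all $t \ge T_0-1$, contradicting the definition of $T_0$), and a step size $\ge 1$ saturates the projection — one checks that $(1-\gamma_{T_0-1})z_{T_0-1} - \gamma_{T_0-1}\epsilon_{T_0-1}$ always lies outside the open interval $(-1,1)$, so $z_{T_0} = -\epsilon_{T_0-1}\in\{-1,1\}$ and hence $z_{T_0}^2 = 1$ almost surely. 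For the inductive step, since $x\mapsto (1-\gamma_t)^2 x + \gamma_t^2$ is nondecreasing, $a_{t+1}\ge \tfrac{(1-\gamma_t)^2}{n} + \gamma_t^2$ with $n = t-T_0+1$, and it remains to verify the scalar inequality $\tfrac{(1-\gamma)^2}{n}+\gamma^2 \ge \tfrac{1}{n+1}$ for $\gamma\in[0,1]$; this holds because the left-hand side is a convex quadratic in $\gamma$ whose minimum, attained at $\gamma = \tfrac{1}{n+1}$, equals exactly $\tfrac{1}{n+1}$. Thus $a_{t+1}\ge \tfrac{1}{(t+1)-T_0+1}$, closing the induction.

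The only genuinely non-mechanical points are the two observations about the projection: that it is inactive for all $t \ge T_0$ (this is precisely what makes the recursion in part 1 an exact identity, and hence the lower bound tight), and that for $t = T_0-1$ a step size $\ge 1$ actually helps by pinning $z_{T_0}$ to $\pm 1$, which yields the sharp base case $\mathbb{E} z_{T_0}^2 = 1$. Once these are in place, everything reduces to a one-line recursion and an elementary quadratic minimization, so I do not expect any further obstacle.
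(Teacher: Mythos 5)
Your proof is correct and follows essentially the same route as the paper: establish that the projection is inactive once $\gamma_t<1$ so the recursion $\mathbb{E}z_{t+1}^2=(1-\gamma_t)^2\mathbb{E}z_t^2+\gamma_t^2$ is exact, pin $\mathbb{E}z_{T_0}^2=1$ via the step size $\ge 1$ saturating the projection, and close the induction by minimizing $(1-\gamma)^2 a+\gamma^2$ over $\gamma$. The only cosmetic difference is in the inductive step, where you substitute $a=1/n$ before minimizing while the paper first derives the closed form $a/(1+a)$ and then invokes its monotonicity — the same calculation in a different order.
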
 

\begin{proof}
Suppose $\gamma_t \geq 1$. Then:
\begin{align*}
|z_t (1-\gamma_t) + \gamma_t \epsilon_t| &\geq \gamma_t|\epsilon_t| - |z_t(1-\gamma_t)| 
\geq \gamma_t - |1-\gamma_t|  =  1
\end{align*} 

Therefore, when $\gamma_t \geq 1$, $|z_{t+1}| = 1$. Therefore, $z_{T_0} = 1$ almost surely. When $t\geq T_0$, 
\begin{align*}
|z_t - \gamma_t(z_t + \epsilon_t)| &\leq |(1-\gamma_t)z_t| + \gamma_t |\epsilon_t| \\
&= (1-\gamma_t)|z_t| + \gamma_t \leq 1  
\end{align*}
Therefore, when $t \geq T_0$, the iteration of SGD won't leave the set $[-1,1]$ almost surely, so there is no need for the projection step to obtain the next iterate. That is, for $t \geq T_0$, $z_{t+1} = z_t(1-\gamma_t) + \epsilon_t \gamma_t$. Squaring and taking expectations, we conclude:

\begin{align*}
\mathbb{E}|z_{t+1}|^2 &= (1-\gamma_t)^2 \mathbb{E} z_t^2 + \gamma_t^2 \\
&\geq \inf_{\gamma\in \mathbb{R}} (1-\gamma)^2 \mathbb{E} z_t^2 + \gamma^2 \\
&= \frac{\mathbb{E} z_t^2}{1+ \mathbb{E} z_t^2}
\end{align*}

Clearly, $\mathbb{E}|z_{T_0}|^2 = 1 = \frac{1}{T_0-T_0+1}$. Using induction in the equation above, we conclude:
 $\mathbb{E}|z_{t}|^2 \geq \frac{1}{t - T_0 +1}$ for every $t \geq T_0$.

\end{proof}

We divide $\mathbb{N}$ into time intervals of the form $\{2^{k},2^{k}+1,\dots, 2^{k+1}-1\} := I_k$. We have the following lemma:

\begin{lemma}
If $\gamma_t =\leq \frac{C}{t}$ for some constant $C \geq 0$ and there exist positive infinite sequences $c_k$ and $d_k$ such that $\lim_{k\to \infty}c_k = \infty$, $\lim_{k\to \infty}d_k = 0$ and every $k$, either one of the two conditions below hold:
\begin{enumerate}
\item
$\sum_{t\in I_k}\gamma_t^2 \geq c_k 2^{-k} \left(\sum_{t\in I_k}\gamma_t\right)^2$
\item $\sum_{i\in I_k}\gamma_t \leq d_k$
\end{enumerate}
Then, SGD with step size $\gamma_t$ is bad in expectation.
\label{lem:high_var_bad_expectation}
\end{lemma}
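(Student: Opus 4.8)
The plan is to exhibit bad‑in‑expectation behaviour on the quadratic example already set up for Lemma~\ref{lem:square_example}: take $F_1(x)=\tfrac12 x^2$ on $[-1,1]$ with the Rademacher subgradient oracle $\hat g_t(x)=x+\epsilon_t$ and initial point $z_1=1$ (this $F_1$ is $1$‑strongly convex and $1$‑Lipschitz on $[-1,1]$, so after the rescaling at the start of Section~\ref{sec:lower_bounds} it meets Assumptions~\ref{as:lc}, \ref{as:dia}, \ref{as:sc}). Writing $a_t:=\mathbb{E} z_t^2$, I will show $2^k\,\mathbb{E}[F_1(z_{2^k})-F_1(0)]=\tfrac12\cdot 2^k a_{2^k}\to\infty$, so the fixed subsequence $t_k=2^k$ witnesses that SGD with step size $\gamma_t$ is bad in expectation.

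The first — and, I expect, the only substantive — step is to extract from the dichotomy hypothesis the fact that the per‑block step mass $f_k:=\sum_{t\in I_k}\gamma_t$ tends to $0$. On a block where the first alternative holds I would combine the elementary bound $\sum_{t\in I_k}\gamma_t^2\le(\max_{t\in I_k}\gamma_t)\sum_{t\in I_k}\gamma_t\le \tfrac{C}{2^k}f_k$ (valid since $\gamma_t\le C/t\le C/2^k$ on $I_k$) with the assumed lower bound $\sum_{t\in I_k}\gamma_t^2\ge c_k 2^{-k}f_k^2$ to get $f_k\le C/c_k$; on a block where the second alternative holds, $f_k\le d_k$ outright. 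Since $c_k\to\infty$ and $d_k\to0$, this gives $f_k\le\max(C/c_k,\,d_k)\to0$ for every $k$, with no case distinction surviving into the rest of the argument.

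Then I would unroll the recursion of Lemma~\ref{lem:square_example} across one doubling block. For $2^k$ past the threshold $T_0$ after which $\gamma_t<1$, iterating $a_{t+1}=(1-\gamma_t)^2 a_t+\gamma_t^2$ over $t\in I_k$ gives $a_{2^{k+1}}=\big(\prod_{t\in I_k}(1-\gamma_t)^2\big)a_{2^k}+N_k$ with $N_k\ge0$, hence $a_{2^{k+1}}\ge\big(\prod_{t\in I_k}(1-\gamma_t)^2\big)a_{2^k}$. Because $f_k\to0$, for all large $k$ every $\gamma_t$ on $I_k$ is below $\tfrac12$, so $\prod_{t\in I_k}(1-\gamma_t)^2\ge\exp(-4f_k)\ge 1-4f_k$. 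Setting $b_k:=2^k a_{2^k}$ this says $b_{k+1}\ge 2(1-4f_k)b_k$, which is $\ge\tfrac32 b_k$ once $f_k\le\tfrac1{16}$; since $b_k>0$ by part 2 of Lemma~\ref{lem:square_example}, we conclude $b_k\to\infty$ geometrically, which finishes the proof.

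The conceptual content is entirely in the first step: the dichotomy is engineered precisely so that, whichever alternative a block falls into, it carries only a vanishing amount of step mass, and a contraction factor of only $1-4f_k$ per block is far too weak to sustain the $\Theta(2^{-k})$ decay of $a_{2^k}$ that ``good in expectation'' would require. In particular, nothing here depends on whether $\sum_k f_k$ converges or diverges, nor on how many blocks fall into each alternative, so no further case analysis is needed.
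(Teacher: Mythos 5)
Your proof is correct and takes a genuinely simpler route than the paper's. The paper proceeds by contradiction — assuming $\mathbb{E}|z_{T_k}|^2\le L/T_k$ for all $k$ — and handles the two alternatives of the dichotomy with separate estimates: in the first case it keeps the variance term $\sum_{t\in I_k}\gamma_t^2$ in the recursion, feeds the lower bound $\sum\gamma_t^2\ge c_k 2^{-k}S_k^2$ and the assumed decay of $a_{T_k}$ into a one-variable convex optimization over $S_k$ to extract a contraction factor $1-C'/c_k$, and in the second case simply uses $1-4d_k$; the contradiction is then reached by comparing the resulting per-block factor against $2^{-1}$. Your key observation — that the first alternative, combined with the pointwise bound $\gamma_t\le C/t\le C/2^k$ on $I_k$ and the elementary inequality $\sum\gamma_t^2\le(\max\gamma_t)\sum\gamma_t$, already forces $f_k\le C/c_k$ — collapses both alternatives into the single statement $f_k\to0$ before any probabilistic estimate is made, which lets you drop the variance term $N_k\ge 0$ entirely and avoid both the case split and the optimization. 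The direct statement $b_{k+1}\ge 2(1-4f_k)b_k\ge\tfrac32 b_k$ for large $k$ is also conceptually cleaner than the paper's contradiction, since it makes the geometric blow-up explicit rather than ruling out a putative bound. What the paper's approach buys is a slightly more "honest" use of the first alternative's content (it exploits the variance it promises rather than turning it into a smallness constraint on $f_k$), but given the $O(1/t)$ hypothesis is available anyway, your shortcut is valid and yields the same conclusion with less machinery. One small stylistic nit: the $(1-\gamma_t)^2\ge e^{-4\gamma_t}$ step requires $\gamma_t\le\tfrac12$, which you correctly note follows from $f_k\to0$ (since each $\gamma_t\le f_k$), but it is worth making explicit that you need $2^k$ past both $T_0$ and the threshold where $f_k\le\tfrac12$.
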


\begin{proof}
We consider the optimization problem considered in Lemma~\ref{lem:square_example} i.e, optimizing $F(x) = x^2$. Let $T_k = 2^k$. We assume the contrary - that is, $\mathbb{E}|z_{T_k}|^2 \leq \frac{L}{T_k}$ for every $k$, for some $L >0$. As shown in the second inequality of Lemma~\ref{lem:square_example}, irrespective of the choice of $\gamma_1,\dots, \gamma_{T_k-1}$, $$\mathbb{E}|z_{T_k}^2| \geq \frac{1}{T_k-1}\,.$$
From the first equality in Lemma~\ref{lem:square_example}, we conclude that for $t \in I_k$, $\mathbb{E}|z_{t+1}|^2 = (1-\gamma_t)^2 \mathbb{E} z_t^2 + \gamma_t^2$
Since $\gamma_t \leq \frac{C}{t}$, we can take $k$ large enough so that $\gamma_t \leq \frac{1}{2}$ for every $t \in I_k$. Using the fact that $(1-\gamma_t) \geq \exp{-\frac{\gamma_t}{1-\gamma_t}} \geq \exp{-2\gamma_t}$. Therefore, 

$$\mathbb{E}|z_{t+1}|^2 \geq e^{-4\gamma_t} \mathbb{E}|z_{t}|^2 + \gamma_t^2$$
Unravelling the recursion above, we conclude:

\begin{equation}
\mathbb{E}|z_{T_{k+1}}|^2 \geq e^{-4\sum_{t\in I_k}\gamma_t}\left[ \mathbb{E}|z_{T_k}|^2 + \sum_{t \in I_k} \gamma_t^2\right] \label{eq:inter_window_contraction}
\end{equation}

We define $S_k := \sum_{t\in I_k} \gamma_t$. 

\begin{enumerate}

\item Suppose for a particular $k$, the first item in the statement of the lemma holds

 By assumption, $\sum_{t \in I_k} \gamma_t^2 \geq \frac{c_k}{T_k}S_k^2$. Using this in Equation~\eqref{eq:inter_window_contraction}, we conclude:

$$\mathbb{E}|z_{T_{k+1}}|^2 \geq e^{-4S_k}\left[ \mathbb{E}|z_{T_k}|^2 +\frac{c_k}{T_k}S_k^2 \right]$$

Now, since $\gamma_t \leq \frac{C}{t}$, we have $S_k \leq C$. Therefore, 

\begin{align}
\mathbb{E}|z_{T_{k+1}}|^2 &\geq e^{-4S_k} \mathbb{E}|z_{T_k}|^2 +\frac{c_k}{T_k}S_k^2 e^{-4C} \nonumber \\
&=  \mathbb{E}|z_{T_k}|^2 \left[e^{-4S_k} +\frac{c_k S_k^2 e^{-4C}}{T_k\mathbb{E}|z_{T_k}|^2} \right] \nonumber\\
&\geq  \mathbb{E}|z_{T_k}|^2 \left[e^{-4S_k} +\frac{c_k S_k^2 e^{-4C}}{L} \right] \nonumber\\
&\geq \mathbb{E}|z_{T_k}|^2 \inf_{x \geq 0} \left[e^{-4x} + \frac{x^2c_k e^{-4C}}{L}\right] \label{eq:bad_contraction_1}
\end{align}

In the third step, we have used the fact that $T_k \mathbb{E}|z_{T_k}|^2 \leq L$.  We now consider the function $h : \mathbb{R}^{+} \to \mathbb{R}^{+}$ given by $h(x) = e^{-2x} + \kappa x^2$ for some $\kappa >0$. Clearly,  $h$ is convex, bounded below and tends to infinity as $x\to \infty$. Therefore, it has a unique minimizer $t^*$ - the unique point such that $h^{\prime}(t^{*}) =0$. That is, $t^{*}$ is the unique point which satisfies: $\kappa t^{*} = 2 e^{-4t^{*}} \leq 2$. Therefore, $t^{*} \leq \frac{2}{\kappa}$. Therefore, $h(t^{*}) \geq e^{-4t^{*}} \geq e^{-8/\kappa} \geq 1- \frac{8}{\kappa}$.  In Equation~\eqref{eq:bad_contraction}, we take $\kappa = \frac{c_ke^{-4C}}{L}$ we conclude:

\begin{align*}
\mathbb{E}|z_{T_{k+1}}|^2 \geq \mathbb{E}|z_{T_k}|^2 \left(1- \tfrac{C^{\prime}}{c_k}\right)
\end{align*}

 Where $C^{\prime}$ is a constant depending only on $L$ and $C$.
\item  Suppose for a particular $k$, the second item in the statement of the lemma holds:
Then, by Equation~\eqref{eq:inter_window_contraction}, we have:

\begin{equation}
\mathbb{E}|z_{T_{k+1}}|^2 \geq e^{-4d_k}\mathbb{E}|z_{T_k}|^2 \geq (1-4d_k)\mathbb{E}|z_{T_k}|^2
\label{eq:bad_contraction_2}
\end{equation}

\end{enumerate}

From Equations~\eqref{eq:bad_contraction_1} and ~\eqref{eq:bad_contraction_2}, we conclude that there exists an absolute constant $\bar{C}$ depending only on $C$ and $L$ such that:

\begin{equation}
\mathbb{E}|z_{T_{k+1}}|^2 \geq \left(1-\bar{C}\max\left(d_k,\tfrac{1}{c_k}\right)\right) \mathbb{E}|z_{T_k}|^2 
\label{eq:bad_contraction}
\end{equation}
 
Since $\max\left(d_k,\tfrac{1}{c_k}\right) \to 0$, we can choose $k$ large enough so that $\sup_{s >k}\bar{C}\max\left(d_k,\tfrac{1}{c_k}\right) \leq 1-e^{-\epsilon}$ for arbitrary $\epsilon >0$. 
 
From Equation~\eqref{eq:bad_contraction}, it follows that for arbitrary $K \in \mathbb{N}$,
 $$\mathbb{E}|z_{T_{k+K}}|^2 \geq e^{-\epsilon K} \mathbb{E}|z_{T_k}|^2 $$

By Lemma~\ref{lem:square_example}, $\mathbb{E}|z_{T_k}|^2  \geq \frac{1}{T_k-1} \geq 2^{-k}$. By our assumption, $\mathbb{E}|z_{T_{k+K}}|^2 \leq L 2^{-k-K}$. Therefore, we conclude:$L 2^{-k-K} \geq e^{-\epsilon K} 2^{-k}$ for every $K \in \mathbb{N}$. This cannot hold for any finite $L$ when we take $\epsilon < \log{2}$. This contradicts our assumption. Therefore, SGD with step size $\gamma_t$ is bad in expectation.

\end{proof}

We will show that if conditions for $\gamma_t$ in Lemma~\ref{lem:square_example} or those in Lemma~\ref{lem:high_var_bad_expectation} don't hold, then SGD is bad almost surely. We recall the definition of the interval $I_k = \{2^{k}+1,\dots, 2^{k+1}\}$. We prove the following lemma to inspect how frequently long, contiguous segments of $\epsilon_t$ are all equal to $1$ for $t \in I_k$. We take $\tau_k := 2^{\lfloor\log_2(k/2) \rfloor}$.  We note that $\frac{k}{4} \leq \tau_k \leq \frac{k}{2}$We can divide $I_k$ into $|I_k|/\tau_k$ contiguous, disjoint intervals, each of size $\tau_k$. We call these intervals $J_k(i)$ for $i\in\{1,\dots, |I_k|/\tau_k\}$. We let $A_k$ to be the event that for some $i \in\{1,\dots, |I_k|/tau\} $, $\epsilon_t = 1$ for all $t \in J_k(i)$. In particular, the even $A_k$ implies that there is a contiguous $\tau_k$ length sequence  of $\epsilon_t$ of all $1$s in $I_k$.

\begin{lemma}

\item $\mathbb{P}(A_k^{c}) \leq C k 2^{-k/2}$ for some absolute constant $C$.

\label{lem:infinitely_often}
\end{lemma}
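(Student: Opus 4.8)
The plan is to estimate the probability of the \emph{bad} event $A_k^c$, namely that within the interval $I_k$, none of the $|I_k|/\tau_k$ disjoint blocks $J_k(1),\dots,J_k(|I_k|/\tau_k)$ consists entirely of $+1$'s. Since the $\epsilon_t$ are i.i.d.\ Rademacher, for a fixed block $J_k(i)$ of length $\tau_k$ we have $\mathbb{P}(\epsilon_t = 1 \ \forall t \in J_k(i)) = 2^{-\tau_k}$, and the events across the disjoint blocks are mutually independent. Hence
$$\mathbb{P}(A_k^c) = \prod_{i=1}^{|I_k|/\tau_k}\left(1 - 2^{-\tau_k}\right) = \left(1 - 2^{-\tau_k}\right)^{|I_k|/\tau_k} \leq \exp\left(-\frac{|I_k|}{\tau_k}\,2^{-\tau_k}\right).$$

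The second step is to plug in the explicit sizes. We have $|I_k| = 2^k$ and, by the definition $\tau_k = 2^{\lfloor \log_2(k/2)\rfloor}$, the bounds $k/4 \leq \tau_k \leq k/2$. Therefore $2^{-\tau_k} \geq 2^{-k/2}$ and $|I_k|/\tau_k = 2^k/\tau_k \geq 2^k/(k/2) = 2^{k+1}/k$, so the exponent satisfies
$$\frac{|I_k|}{\tau_k}\,2^{-\tau_k} \;\geq\; \frac{2^{k+1}}{k}\cdot 2^{-k/2} \;=\; \frac{2^{k/2+1}}{k}.$$
Consequently $\mathbb{P}(A_k^c) \leq \exp\left(-\frac{2^{k/2+1}}{k}\right)$. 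The final step is purely a matter of comparing this super-exponentially small bound against the claimed bound $Ck\,2^{-k/2}$: since $\exp(-2^{k/2+1}/k)$ decays far faster than $2^{-k/2}$ (indeed faster than any polynomial times $2^{-k/2}$), there is an absolute constant $C$ with $\exp(-2^{k/2+1}/k) \leq Ck\,2^{-k/2}$ for all $k \geq 1$; one can, e.g., verify the finitely many small $k$ by hand and use the trivial monotone comparison $2^{k/2+1}/k \geq (k/2)\log 2 + \log(Ck)$ for large $k$ to handle the tail.

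I do not anticipate a serious obstacle here; the only mild subtlety is making sure $\tau_k$ divides $|I_k|$ (it does, since both are powers of $2$ and $\tau_k \le |I_k|$) so that the partition into blocks of equal size $\tau_k$ is legitimate, and keeping track of the rounding in $\tau_k = 2^{\lfloor \log_2(k/2)\rfloor}$ so that the inequalities $k/4 \le \tau_k \le k/2$ are applied in the right direction (lower bound on $2^{-\tau_k}$ wants an upper bound on $\tau_k$; lower bound on $|I_k|/\tau_k$ also wants an upper bound on $\tau_k$). Once those are in place the estimate is immediate from independence across blocks and a single application of $1-x \le e^{-x}$.
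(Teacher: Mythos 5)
Your proof is correct and follows essentially the same route as the paper: partition $I_k$ into the $|I_k|/\tau_k$ disjoint length-$\tau_k$ blocks, use independence of the $\epsilon_t$ across blocks to get $\mathbb{P}(A_k^c)=(1-2^{-\tau_k})^{|I_k|/\tau_k}$, and bound this with $1-x\le e^{-x}$. The only divergence is in the last step: you carry the super-exponential bound $\exp(-2^{k/2+1}/k)$ all the way and then argue asymptotically that it is dominated by $Ck\,2^{-k/2}$, whereas the paper applies $xe^{-x}\le 1/e$ (equivalently $e^{-x}\le 1/(ex)$) to the same exponential to land directly on the explicit algebraic bound $\mathbb{P}(A_k^c)\le \tfrac{1}{e\,p_{\tau_k}\lfloor 2^k/\tau_k\rfloor}\le Ck\,2^{-k/2}$ without a separate comparison step. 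Both are fine; the paper's variant is just a cleaner way to extract a concrete constant.
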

\begin{proof}
We subdivide the interval $I_k$ into disjoint subintervals of length $\tau_k$. There are $\tfrac{2^k}{\tau_k} $ such intervals. The event $A_k$ holds if over one such subinterval, the random signs are all $1$. The probability of a given subinterval having all signs equal to $1$ is $p_{\tau_k}:= \tfrac{1}{2^{\tau_k}}$. Therefore, we conclude:

$$\mathbb{P}(A_{k}) = 1- (1-p_{\tau})^{\lfloor\tfrac{2^k}{\tau} \rfloor}\geq 1- e^{-p_{\tau}\lfloor\tfrac{2^k}{\tau} \rfloor} \geq 1- \frac{1}{ep_{\tau_k}\lfloor\tfrac{2^k}{\tau_k} \rfloor}\,.$$

Here we have used the inequality $xe^{-x} \leq \frac{1}{e}$ for $x > 0$. 

Therefore, we conclude that: $\mathbb{P}(A_k^{c}) \leq Ck2^{-k/2}$ for some absolute constant $C$. 
\end{proof}

We now consider the same function which was considered in Lemma~\ref{lem:sub_harmonic_steps} i.e, $F: [-1,1] \to \mathbb{R}$ defined by $F(x) = |x| + \frac{x^2}{2}$.  $F$ has a global optimum at $x =0$ and it is $1$ strongly convex. Let $\epsilon_t$ be a sequence of i.i.d. rademacher random variables (i.e, uniform over $\{-1,1\}$). We let the subgradient oracle to return $\hat{g}_t(x) = \mathsf{sgn}(x) + x + 3\epsilon_t$.  Let the iterates of SGD for $F$ with step sizes $\gamma_t$ be $y_t$.

\begin{lemma}
Suppose $\gamma_t \leq \frac{C}{t}$, there exists an infinite sequence $(k_r)_{r\in \mathbb{N}}$ and fixed constants $c_0,d_0 >0$ such that both the conditions hold:
\begin{enumerate}
\item  $$\sum_{t\in I_{k_r}}\gamma_t^2 \leq c_0 2^{-k_r} \left(\sum_{t\in I_{k_r}}\gamma_t\right)^2$$  
\item $$\sum_{t \in I_{k_r}} \gamma_t \geq d_0$$
\end{enumerate}
We note that these conditions are the negations of the conditions for $\gamma_t$ in Lemma~\ref{lem:sub_harmonic_steps} and Lemma~\ref{lem:high_var_bad_expectation}. Then SGD with step size $\gamma_t$ is bad almost surely. 
\label{lem:bad_almost_surely}
\end{lemma}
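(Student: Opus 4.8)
Since $F(x)-F(0)=|x|+\tfrac{x^2}{2}\ge |x|$, it suffices to produce, almost surely, an infinite set of times $\sigma_r\to\infty$ with $\sigma_r\,|y_{\sigma_r}|\to\infty$. The plan is: inside each window $I_{k_r}$, locate a length-$\tau_{k_r}$ run of $\epsilon_t=1$ sitting on a sub-interval whose step-size mass is comparable to the window average, and then show such a run forces $|y_t|\gtrsim \tau_{k_r}2^{-k_r}$ either at the start or at the end of the run; combined with $t\ge 2^{k_r}$ on $I_{k_r}$ this gives $t\,|y_t|\gtrsim \tau_{k_r}\gtrsim k_r\to\infty$.

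\emph{Fat sub-intervals (deterministic).} Fix $r$, write $k=k_r$, $N=2^{k}/\tau_k$, and let $J_k(1),\dots,J_k(N)$ be the consecutive length-$\tau_k$ sub-intervals of $I_k$ with masses $S_i:=\sum_{t\in J_k(i)}\gamma_t$. Then $\sum_i S_i=\sum_{t\in I_k}\gamma_t=:S\ge d_0$ by hypothesis~2, while Cauchy--Schwarz inside each $J_k(i)$ and hypothesis~1 give $\sum_i S_i^{2}\le \tau_k\sum_{t\in I_k}\gamma_t^{2}\le c_0\,2^{-k}\tau_k\,S^{2}=c_0S^{2}/N$. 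Bounding $\sum_{i:S_i\ge S/2N}S_i\ge S/2$ trivially from below and by Cauchy--Schwarz from above then yields at least $N/(4c_0)$ indices with $S_i\ge S/(2N)\ge d_0\tau_k/2^{k+1}$; call these \emph{fat} (note $c_0\ge 1$ necessarily, so $N/(4c_0)>0$). Now let $\widetilde A_{k_r}$ be the event that some fat sub-interval of $I_{k_r}$ has $\epsilon_t\equiv 1$ on it. Exactly as in Lemma~\ref{lem:infinitely_often}, but counting only fat sub-intervals,
$$\mathbb{P}(\widetilde A_{k_r}^{c})\le \big(1-2^{-\tau_{k_r}}\big)^{N/(4c_0)}\le \exp\!\Big(-\tfrac{2^{\,k_r-\tau_{k_r}}}{4c_0\tau_{k_r}}\Big)\le \exp\!\Big(-\tfrac{2^{\,k_r/2}}{4c_0\tau_{k_r}}\Big),$$
using $\tau_{k_r}\le k_r/2$; since $k_r\uparrow\infty$ this is summable in $r$, so by Borel--Cantelli almost surely $\widetilde A_{k_r}$ holds for all large $r$.

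\emph{Trajectory on a run.} On $\widetilde A_{k_r}$ pick a fat $J^{*}=[a_r,b_r]$ with $\epsilon_t\equiv 1$, and set $S^{*}:=\sum_{t\in J^{*}}\gamma_t\ge d_0\tau_{k_r}/2^{k_r+1}$; since every $t\in I_{k_r}$ has $t\ge 2^{k_r}$ we have $\gamma_t\le C2^{-k_r}$, which is $o(S^{*})$ and $<\tfrac12$ for $r$ large. With $\epsilon_t=1$ the stochastic subgradient at $y_t$ is $\mathsf{sgn}(y_t)+y_t+3$, so an unprojected step decreases $y$ by $\gamma_t(y_t+4)\in(4\gamma_t,5\gamma_t]$ when $y_t>0$ and by $\gamma_t(y_t+2)\ge\gamma_t$ when $y_t\in[-1,0]$; moreover once $y\le 0$ it stays $\le 0$, and once a projection clips $y$ to $-1$ it stays there. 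Hence, for $r$ large, one of: (i) $y_{a_r}>2S^{*}$, so $|y_{a_r}|>2S^{*}$; (ii) $y$ reaches $-1$ during the run, so $|y_{b_r+1}|=1\ge S^{*}$; (iii) $y_{a_r}\le 2S^{*}$ and $y$ never reaches $-1$: if $y_{a_r}\le 0$ then $y$ only decreases and $y_{b_r+1}\le y_{a_r}-S^{*}\le -S^{*}$; otherwise $y$ must drop to $\le 0$ before the run ends (staying positive on all of $[a_r,b_r+1]$ would force $y_{b_r+1}<y_{a_r}-4S^{*}<0$), and the mass spent while $y>0$ is $\le y_{a_r}/4\le S^{*}/2$, so the mass spent after the crossing (each step then decreasing $y$ by $\ge\gamma_t$) is $\ge S^{*}/2-C2^{-k_r}\ge S^{*}/4$, giving $y_{b_r+1}\le -S^{*}/4$. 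In every case there is $\sigma_r\in\{a_r,\,b_r+1\}\subseteq[2^{k_r},2^{k_r+1}]$ with $|y_{\sigma_r}|\ge S^{*}/4\ge d_0\tau_{k_r}/2^{k_r+3}$. On the almost-sure event above, the $\sigma_r$ are distinct and
$$\sigma_r\big(F(y_{\sigma_r})-F(0)\big)\ \ge\ \sigma_r|y_{\sigma_r}|\ \ge\ 2^{k_r}\cdot\frac{d_0\tau_{k_r}}{2^{k_r+3}}\ =\ \frac{d_0\tau_{k_r}}{8}\ \ge\ \frac{d_0 k_r}{32}\ \xrightarrow[r\to\infty]{}\ \infty,$$
using $\tau_{k_r}\ge k_r/4$; hence SGD with step size $\gamma_t$ is bad almost surely.

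\emph{Main obstacle.} The two delicate points are the second-moment argument converting hypothesis~1 into a positive density of ``fat'' sub-intervals (needed because the run of $\epsilon=1$'s is located adversarially, so we cannot choose which sub-interval it lands on), and the trajectory bookkeeping in case (iii): ruling out a run that starts from a small positive $y_{a_r}$, pushes $y$ down to nearly $0$, and stops — this is precisely what forces the two-pronged conclusion that $|y|$ is large at $a_r$ or at $b_r+1$. The Borel--Cantelli step is then routine given the super-exponential decay $\exp(-2^{k_r/2}/\mathrm{poly}(k_r))$.
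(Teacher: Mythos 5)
Your proof is correct, and it reaches the conclusion by a genuinely different route than the paper. Where the paper draws a \emph{random} sub-interval $J_k(i_0)$ (uniformly among the all-ones blocks when $A_k$ holds, or uniformly among all blocks otherwise, so that $i_0$ is marginally uniform), applies the Paley--Zygmund inequality to deduce $\mathbb{P}[\sum_{t\in J_k(i_0)}\gamma_t \ge \beta\tau_k S_k/|I_k|] \ge (1-\beta)^2/c_0$, and then relies on the \emph{second} Borel--Cantelli lemma for the independent events $B_{k_r} = E_{k_r}\cap A_{k_r}$, you instead use a purely deterministic second-moment (Cauchy--Schwarz) count to show that a constant fraction $N/(4c_0)$ of sub-intervals are ``fat'', and then apply the \emph{first} Borel--Cantelli lemma since $\mathbb{P}(\widetilde A_{k_r}^c) \le \exp(-2^{k_r/2}/\mathrm{poly}(k_r))$ is summable. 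Your route is arguably cleaner because it avoids the auxiliary randomization and the independence structure the paper needs for the second Borel--Cantelli argument, and it yields the slightly stronger a.s.\ conclusion that $\widetilde A_{k_r}$ holds for \emph{all} sufficiently large $r$, not just infinitely often. Your trajectory bookkeeping is more involved than the paper's (the paper simply observes that either $y$ hits $-1$ or $y(T_{\min}) - y(T_{\max}) \ge \sum_{t\in J}\gamma_t$, hence $\max(|y(T_{\min})|,|y(T_{\max})|)\ge \tfrac12\sum_{t\in J}\gamma_t$, which sidesteps your three-way case split on $y_{a_r}$), but your more detailed analysis is also correct, including the $C2^{-k_r}$ correction for the single overshooting step at the zero crossing. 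Both proofs finish the same way by noting $\tau_{k_r} = \Theta(k_r)\to\infty$.
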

\begin{proof}
We will show that there exists a sequence of independent events $B_{k_r}$ for $r \in \mathbb{N}$ such that $\mathbb{P}(B_{k_r}) \geq p_0 > 0$ uniformly and whenever $B_{k_r}$ holds, 

$$\max_{t \in I_{k_r}} \frac{t}{\log t}\left[ F(y_t) - F(0)\right] \geq \delta_0$$
For some constant $\delta_0 >0$. We note that $p_0$ and $\delta_0$ depend only on $C,d_0$ and $c_0$. We consider a random times $T_{\mathsf{max}},T_{\mathsf{min}} \in I_k$ as follows:

\begin{enumerate}
\item If the event $A_k^{c}$ holds, pick a uniformly random element $i_0$ from $\{1,\dots, |I_k|/\tau_k\}$ independent of everything else. Set $T_{\mathsf{max}} := \max J_k(i_0)$ and $T_{\mathsf{min}} := \min J_k(i_0)$
\item If the event $A_k$ holds, pick a uniformly random element $i_0$ from $\{i: \text{ for all } t \in J_k(i), \epsilon_t = 1\}$, independent of everything else. Set $T_{\mathsf{max}} := \max J_k(i_0)$ and $T_{\mathsf{min}} := \min J_k(i_0)$
\end{enumerate}

We note that by symmetry, $i_0$ is uniformly distributed over the set $\{1,\dots, |I_k|/\tau_k\}$. We will show that, when the event $A_k$ holds, then one of the following is true:
\begin{enumerate}
\item $y\left(T_{\mathsf{max}}\right) = -1$.
\item $y\left(T_{\mathsf{min}}\right) - y\left(T_{\mathsf{max}}\right) \geq \sum_{t \in J_k(i_0)}\gamma_t$
\end{enumerate}

Suppose the event $A_k$ holds. Then for $T_{\mathsf{min}} \leq t < T_{\mathsf{max}}$, $y_{t+1} = \max(y_t - \gamma_t (y_t + \mathsf{sgn}(y_t) + 3),-1)$. Since under the event $A_k$, $\epsilon_t = 1$ for every $t\in J_k(i_0)$, we conclude that $\gamma_t (y_t + \mathsf{sgn}(y_t) + 3) \geq \gamma_t$. That is SGD drifts in the negative direction irrespective of the value of the iterate. It is therefore clear that if for some $T_{\mathsf{min}}\leq t \leq T_{\mathsf{max}}$, $y_t$ hits $-1$, then $y\left(T_{\mathsf{max}}\right)  = -1$. Now suppose that for $y_t >-1$ for every $t$ in this range. Then, $y_{t+1} \leq y_{t} - \gamma_t$. But unraveling this recursion, it follows that $y\left(T_{\mathsf{min}}\right) - y\left(T_{\mathsf{max}}\right) \geq \sum_{t \in J_k(i_0)}\gamma_t$. Therefore, it follows that when the event $A_k$ holds:

\begin{align}
\max_{t \in I_k} F(y_t) &\geq \max(F(y(T_{\max})), F(y(T_{\min})))\nonumber \\
&\geq \max(|y(T_{\max})|, |y(T_{\min})|)\nonumber\\
&\geq \min\bigr(1, \tfrac{1}{2}\sum_{t \in J_k(i_0)}\gamma_t\bigr) \label{eq:drift_maxima}
\end{align}

It is clear that since $\gamma_t = C/t$, for $k$ large enough,  $\tfrac{1}{2}\sum_{t \in J_k(i_0)}\gamma_t \leq 1$. Therefore, we conclude that for $k$ large enough, when the event $A_k$ holds,

$$\max_{t \in I_k} F(y_t)  \geq  \tfrac{1}{2}\sum_{t \in J_k(i_0)}\gamma_t \,.$$

Fix $0<\beta <1$.We now consider $E_k$ to be the event  $\bigr\{\sum_{t \in J_k(i_0)}\gamma_t \geq \frac{\beta\tau}{|I_k|} \sum_{t\in I_k}\gamma_t\bigr\}$.

By symmetry, $i_0$ is uniformly distributed over $\{1,\dots, |I_k|/\tau_k\}$. Therefore, $$\mathbb{E}\sum_{t \in J_k(i_0)}\gamma_t = \frac{\tau_k}{|I_k|} \sum_{t\in I_k}\gamma_t$$ and 
\begin{align*}
\mathbb{E}\left(\sum_{t \in J_k(i_0)}\gamma_t\right)^2 &= \frac{\tau_k}{|I_k|}\sum_{i=1}^{|I_k|/\tau_k} \sum_{t,s \in J_k(i_0)} \gamma_t\gamma_s \\
&\leq  \frac{\tau_k}{|I_k|}\sum_{i=1}^{|I_k|/\tau_k} \sum_{t,s \in J_k(i_0)} \frac{\gamma_t^2+\gamma_s^2}{2} \\
&= \frac{\tau_k^2}{|I_k|} \sum_{t\in I_k} \gamma_t^2 
\end{align*}

Now, when $k$ is part of the infinite sequence $(k_r)$, by assumption we have:
\begin{align*}
\mathbb{E}\left(\sum_{t \in J_k(i_0)}\gamma_t\right)^2 &\leq c_0 \frac{\tau_k^2}{|I_k|^2}\left( \sum_{t\in I_k}\gamma_t\right)^2
\end{align*}

Therefore, by Payley-Zigmund inequality, whenever $k$ is part of the infinite sequence $(k_r)$,  for every $\beta < 1$,

$$\mathbb{P}\left[\sum_{t \in J_k(i_0)}\gamma_t \geq \beta \frac{\tau}{|I_k|} \sum_{t\in I_k}\gamma_t\right] \geq (1-\beta)^2\frac{ (\mathbb{E}\sum_{t \in J_k(i_0)}\gamma_t )^2}{\mathbb{E}(\sum_{t \in J_k(i_0)}\gamma_t )^2}
\geq \frac{(1-\beta)^2}{c_0} 
$$

Recalling the definition of $E_k$, we conclude, $\mathbb{P}(E_{k_r}) \geq  \frac{(1-\beta)^2}{c_0} $.

We will now define the event $B_k := E_k\cap A_k$.  The events $B_k$ are all independent by definition. When the event $B_k$ holds, clearly, from equation~\eqref{eq:drift_maxima}, we conclude:

$$\max_{t \in I_{k_r}} F(y_t)  \geq  \tfrac{1}{2}\sum_{t \in J_{k_r}(i_0)}\gamma_t \geq\frac{\beta\tau_{k_r}}{2|I_{k_r}|} \sum_{t\in I_{k_r}}\gamma_t \geq \frac{\beta\tau_{k_r} d_0}{2|I_{k_r}|}$$
The second inequality follows from the defintion of $E_k$. Using the fact that any $t \in I_{k_r}$ is such that $t \leq 2|I_{k_r}|$ and $\tau_{k_r}= \Theta(k_r)$, we conclude that for some $\delta_0 >0$, fixed, the following holds whenever the event $B_{k_r}$ holds.

\begin{equation}
\max_{t \in I_{k_r}} \frac{t}{\log t}\left[ F(y_t) - F(0)\right] \geq \delta_0
\label{eq:log_deviation_io}
\end{equation}

\begin{align*}
\mathbb{P}(B_{k_r}) &\geq \mathbb{P}(E_{k_r}) - \mathbb{P}(A_{k_r})\\
& \geq \frac{(1-\beta)^2}{c_0} - O(k_r 2^{-k_r/2})
\end{align*}

It is clear that we can find a $p_0 >0$ such that for all $k_r$ large enough, $\mathbb{P}(B_{k_r}) > p_0$. 

Since $B_{k_r}$ are independent sets, it follows that infinitely many of them are true with probability $1$. From equation~\eqref{eq:log_deviation_io}, we conclude that SGD with step sizes $\gamma_t$ is bad almost surely.
\end{proof}

\begin{proof}[Proof of Theorem~\ref{thm:lower_bounds}]
We will conclude this from Lemmas~\ref{lem:sub_harmonic_steps}, ~\ref{lem:high_var_bad_expectation} and~\ref{lem:bad_almost_surely}. Therefore, it is sufficient to show that any strictly positive infinite sequence $\gamma_t$ is such that atleast one of the following condition holds
\begin{enumerate}
\item There is an infinite sequence of times $t_k$ such that $\lim_{k\to \infty}t_k\gamma_{t_k} = \infty$. In this case, by Lemma~\ref{lem:sub_harmonic_steps}, we conclude that it is bad in expectation.
\item There exists a $C$ such that $\gamma_t \leq \frac{C}{t}$ and there exist infinite sequences $c_k \to \infty$ and $d_k \to 0$ such that for every k, either
 $\sum_{t\in I_k}\gamma_t^2 \geq c_k 2^{-k} \left(\sum_{t\in I_k}\gamma_t\right)^2$ or $\sum_{t \in I_{k}} \gamma_t \leq d_k$. In this case, by Lemma~\ref{lem:high_var_bad_expectation}, we conclude that it is bad in expectation.
\item There exists a $C$ such that $\gamma_t \leq \frac{C}{t}$ and there exist fixed positive constants $c_0$ and $d_0$ such that for some infinite sub-sequence $(k_r)$,
$\sum_{t\in I_{k_r}}\gamma_t^2 \leq c_0 2^{-k_r} \left(\sum_{t\in I_{k_r}}\gamma_t\right)^2$ and $\sum_{t \in I_{k_r}} \gamma_t \geq d_0$. In this case, by Lemma~\ref{lem:bad_almost_surely}, we conclude that the algorithm is bad almost surely.
\end{enumerate}

It is therefore sufficient to show that if conditions 1 and 2 don't hold then condition 3 holds. The negation of condition 1 is that $\gamma_t \leq \frac{C}{t}$ for some $C >0$. Now, we denote by $$\eta_k := 2^k\frac{\sum_{t\in I_k}\gamma_t^2}{\left(\sum_{t\in I_k}\gamma_t\right)^2}$$ and $$\lambda_k := \sum_{t \in I_{k}} \gamma_t $$. Therefore, $\eta_k \geq c_k$ or $\lambda_k \leq  d_k$ for some $c_k \to \infty$ and $d_k \to 0$ is equivalent to $\eta_k + \frac{1}{\lambda_k} \to \infty$ which is equivalent to the statement that for every subsequence $k_r$,  $\eta_{k_r} + \frac{1}{\lambda_{k_r}} \to \infty$. Therefore the negation of condition 2 is equivalent to atleast one of the following conditions being true
\begin{enumerate}
\item There exists infinite sequence $(t_k)$ such that $t_k\gamma_{t_k} \to \infty$
\item There exists and infinite subsequence $k_r$ such that $\eta_{k_r} + \frac{1}{\lambda_{k_r}} \leq M$ for some $M >0$. That is,  $\eta_{k_r} \leq M := c_0$ and $\lambda_{k_r} \geq \frac{1}{M} := d_0$ 
\end{enumerate} 
Therefore we conclude that when neither of the conditions 1 and 2 hold, then condition 3 holds. This proves our result.

\end{proof}
\end{document}